\PassOptionsToPackage{reqno}{amsmath}
\documentclass[12pt]{amsart}

\usepackage{amsmath,amscd,amssymb,amsfonts,amsthm,hyperref}
\usepackage{mdframed}
\usepackage{xcolor}  
\hypersetup{   colorlinks,   linkcolor={red!50!black},    citecolor={blue!70!black},   urlcolor={blue!80!black}}

\usepackage{tcolorbox}
\usepackage{amsmath,amssymb}
\usepackage{mathrsfs}
\usepackage{comment}

\usepackage[
  a4paper,
  top=2.4cm,
  bottom=2.8cm,
  left=2.6cm,
  right=2.6cm
]{geometry}

\usepackage{amsmath,amssymb,amsthm,eucal,manfnt}
\usepackage[normalem]{ulem}
\usepackage{color, tikz}
\usepackage{tikz-cd,tikz-3dplot}
\usetikzlibrary{calc,babel}
\usetikzlibrary{arrows,decorations.markings}
\usepackage[all]{xy}
\usepackage{tensor}
\usepackage{todonotes}
\usepackage{typed-checklist}
\usepackage{comment}
\title[]{On the equivalence of the integrability obstructions for transitive Lie algebroids
}

\author{Paolo Antonini, Alessio Giannotta}

\email{paolo.antonini@unisalento.it, alessio.giannotta@unisalento.it}
\address{}
\keywords{}
\thanks{}
\subjclass[]{}

\numberwithin{equation}{section} 

\theoremstyle{plain} 
\newtheorem{thm}{Theorem}[section]
\newtheorem{lemma}[thm]{Lemma}
\newtheorem{prop}[thm]{Proposition}
\newtheorem{corl}[thm]{Corollary}

\theoremstyle{definition} 
\newtheorem{defn}[thm]{Definition}

\theoremstyle{remark} 
\newtheorem{rmk}[thm]{Remark}
\newtheorem{exm}[thm]{Example}

\newcommand{\g}{\mathfrak{g}}

\newcommand{\rest}[1]{\big\rvert_{#1}} 
\newcommand{\R}{\mathbb{R}}   

\def\pairL_#1(#2|#3){\mbox{${}_{#1}({#2}\mid{#3})$}} 
\def\pairR(#1|#2)_#3{\mbox{$({#1}\mid{#2})_{#3}$}} 
\def\scal<#1|#2>{\langle#1\mid#2\rangle} 

\newbox\ncintdbox \newbox\ncinttbox 
	\setbox0=\hbox{$-$}
	\setbox2=\hbox{$\displaystyle\int$}
	\setbox\ncintdbox=\hbox{\rlap{\hbox
		to \wd2{\hskip-.125em \box2\relax\hfil}}\box0\kern.1em}
	\setbox0=\hbox{$\vcenter{\hrule width 4pt}$}
	\setbox2=\hbox{$\textstyle\int$}
	\setbox\ncinttbox=\hbox{\rlap{\hbox
		to \wd2{\hskip-.175em \box2\relax\hfil}}\box0\kern.1em}

\begin{document}

\begin{abstract}
The integrability problem for transitive Lie algebroids can be looked at from different perspectives, revealing an interplay between cohomological methods and homotopical constructions. Mackenzie \cite{zbMATH00822678} introduced a cohomological obstruction defined via sheaf-theoretic methods. On the other hand, Crainic and Fernandes \cite{zbMATH02057953} used a path space approach and characterized integrability in terms of the monodromy. Recently, Meinrenken \cite{zbMATH07480966} formulated the monodromy in terms of a clutching construction.

We show that all of these agree. In particular, we identify the monodromy map with the Mackenzie obstruction class through the natural pairing between cohomology and homotopy.
\end{abstract}

\maketitle

%
%



\section{Introduction}
  Let $A \Rightarrow M$ be a transitive algebroid over a (simply connected) manifold $M$.  The integrability problem for $A$ can be approached from two complementary perspectives: a cohomological one, given by the obstruction class $e^A \in \check{H}^2(M;Z\widetilde{G})$ introduced by Mackenzie \cite{zbMATH00822678}, and an homotopical one, based on the monodromy map $\partial^A:\pi_2(M) \to Z\widetilde{G}$ constructed by Crainic and Fernandes  \cite{zbMATH02057953}.

In both cases, $\widetilde{G}$ is the simply connected Lie group integrating the adjoint Lie algebra at a fixed reference point $m\in M$ and $Z\widetilde{G}$ is its center. \smallskip 

In this note we identify these two obstructions via the natural pairing between cohomology and homotopy. In doing so, we relate each of them to the description of the monodromy map given by Meinrenken \cite{zbMATH07480966} via a clutching construction. \smallskip

We briefly recall the three approaches referring to the already existing literature for the details.

Mackenzie \cite{zbMATH00822678} constructed a class $e^A \in \check{H}^2(M;Z\widetilde{G})$ in the \v{C}ech cohomology of the base with coefficients in $Z\widetilde{G}$. This can be viewed as a non abelian generalization of the Chern class of a $U(1)$-bundle and completely controls integrability: the algebroid $A$ is integrable if and only if $e^A$ lies in the image of a discrete subgroup of $Z\widetilde{G}$. On the other hand Crainic and Fernandes \cite{zbMATH02057953} solved the integrability problem for any Lie algebroid. In the transitive, integrability is encoded by the image of a 
{\em{monodromy map}}, a
group morphism $\partial^A:\pi_2(M) \to Z\widetilde{G}$. The algebroid is integrable if and only if the image of $\partial^A$ is discrete. \smallskip

Mackenzie's methods are of local to global type. First he proves that transitive algebroids are locally trivial, then constructs a class preventing the globalization of the local integrations.
Crainic and Fernandes methods are purely global, they use a path space approach inspired by the Poisson sigma model \cite{zbMATH01686785,zbMATH01628538} and Sullivan's rational homotopy theory. The monodromy map is related to the failure of the smoothness of the Weinstein groupoid of $A$, a topological groupoid, which in a sense, already integrates $A$.
Finally Meinrenken has recently given a description of the monodromy map generalizing the clutching construction for principal bundles with connection \cite{zbMATH07480966}. \smallskip

Before stating the result identifying all these obstructions we look at the example of the transitive algebroid appearing in the geometric prequantization problem \cite{zbMATH05024689,zbMATH07456034,zbMATH06931092}.

Let $(M, \omega)$ be a symplectic manifold. Consider the Lie algebroid $A_\omega$ given by the vector bundle $A_\omega=TM\times\mathbb{R}=TM\oplus (M\times \mathbb{R})$, the projection onto $TM$ as anchor map, and Lie bracket on sections:
$$[X\oplus\xi,Y\oplus\eta]=[X,Y]\oplus X(\eta)-Y(\xi)+\omega(X,Y).$$
Fix a good cover $\{U_i\}_{i\in I}$ of $M$. The Mackenzie's construction
first yields a family of Lie algebroid trivializations $\Psi_i\colon TU_i\times\R\to \left. A_\omega\right|_{U_i}=TU_i\times\R$ in the form
$$\Psi_i(X\oplus \xi)=X\oplus\theta_i(X)+\xi.$$
Since these preserve the bracket, each 1-form $\theta_i$ (locally) integrates the curvature of $A_\omega$, i.e.
$d\theta_i=-\left.\omega\right|_{U_i}.$
Therefore on the intersections $U_{ij}$, we can find $s_{ij}\colon U_i\cap U_j\to \R$ such that
$ds_{ij}=\theta_j-\theta_i.$
Then, the cohomological obstruction class in $\check{H}^2(M;\R)$ is represented by the \v{C}ech cocycle 
$$e_{
ijk}=s_{ij}+s_{jk}-s_{ik}.$$
If $\mathscr{C}_{dR}\colon \check{H}^2(M;\R) \to H^2_{dR}(M)$ denotes the natural isomorphism from the \v{C}ech to the de Rham cohomology, then we have $\mathscr{C}_{dR}(e^A)=[\omega]$.
This example belongs to the, more general, class of algebroids for which the monodromy map $\partial^{A_\omega}\colon \pi_2(M)\to\R$ is explicit. Crainic and Fernandes proved that it is given by:
$$ \partial^{A_\omega}([\gamma])=-\int_{S^2}\gamma^*\omega,$$
for all $\gamma\colon S^2\to M$. Hence, the pairing between cohomology and homotopy identifies the Mackenzie class with the Crainic--Fernandes monodromy, i.e.
$$ \langle \mathscr{C}_{dR}(e^A), [\gamma] \rangle= \langle [\omega], [\gamma] \rangle=\int_{S^2}\gamma^*\omega=-\partial^{A_\omega}([\gamma]).$$
This example shows how the integrability problem, for algebroids coming from symplectic manifolds, is essentially equivalent to the prequantization problem in the approach of Weil \cite[Theorem 8.1.3]{MACKENZIE2000445}.

We can now state our result.
\begin{thm}\label{th1}
Let $A \Rightarrow M$ be a transitive Lie algebroid.
Fix a point $m \in M$ and let $\g=L_m$ be the adjoint Lie algebra at $m$ with simply connected associated Lie group $\widetilde{G}$.
The Crainic--Fernandes monodromy map $\partial^{\,A}: \pi_2(M,m) \to Z\widetilde{G}$ and the Meinrenken map
at $m$
agree. The Mackenzie class $e^A\in \check{H}^2(M;Z\widetilde{G})$
and the monodromy map are related via the natural isomorphism $\mathscr{C}:\check{H}^2(M;Z\widetilde{G}) \to H^2(M;Z\widetilde{G})$ 
and the natural pairing between singular cohomology and homotopy by the identity:
 $$\langle \mathscr{C}(e^A), \,\cdot \, \rangle =-\partial^A.$$
\end{thm}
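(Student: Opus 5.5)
The plan is to pass through Meinrenken's clutching description as an intermediate, proving two comparisons: Crainic--Fernandes monodromy equals the Meinrenken map, and the Meinrenken map equals $-\langle \mathscr{C}(e^A),\cdot\rangle$. Both rest on the same reduction. Given $[\gamma]\in\pi_2(M,m)$, pull $A$ back along a smooth representative $\gamma\colon S^2\to M$ to get a transitive algebroid $\gamma^!A$ over $S^2$. Naturality of all three constructions (Mackenzie's class under pullback of \v{C}ech cocycles, the monodromy under maps of algebroids, Meinrenken's map by definition) reduces everything to transitive algebroids over $S^2$, evaluated on the fundamental class. I would first record these naturality statements as lemmas. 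Then, over $S^2$, cover the sphere by two discs $D_\pm$ meeting in a collar around the equator; a three-set good refinement can be used if a genuine \v{C}ech $2$-cocycle is needed.

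For the comparison with Meinrenken: over each contractible $D_\pm$, Mackenzie's local triviality gives isomorphisms $\Psi_\pm\colon TD_\pm\oplus(D_\pm\times\g)\to A|_{D_\pm}$ of the form $X\oplus\xi\mapsto \sigma_\pm(X)+\xi$, with flat local connections $\sigma_\pm$. On the overlap, the transition $\Psi_+^{-1}\Psi_-$ is a $\g$-valued (Maurer--Cartan type) $1$-form integrating to a map $g\colon S^1\times(-\epsilon,\epsilon)\to \mathrm{Aut}$, lifting to $\widetilde G$ modulo the center. Meinrenken's clutching value is exactly the obstruction to lifting this clutching loop consistently, namely an element of $Z\widetilde G$ obtained by integrating around the equator. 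On the Mackenzie side, the cocycle $e_{ijk}$ is the defect of the local lifts $s_{ij}$ on triple overlaps. The standard Chern-class computation (Mayer--Vietoris/\v{C}ech-to-singular via a triangulation subordinate to the cover) then shows that $\langle\mathscr{C}(e),[S^2]\rangle$ equals the winding defect of the clutching lift. In the abelian case this specializes to the symplectic example computed above, which fixes the sign $-1$. I would do this step by choosing a subordinate triangulation of $S^2$ and evaluating the \v{C}ech--singular comparison map explicitly on it, following Bott--Tu's proof of $\check H\cong H$.

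For Crainic--Fernandes versus Meinrenken: $\partial^A([\gamma])$ is computed by choosing a splitting $\sigma$ with curvature $R_\sigma$, viewing the sphere as a family of $A$-paths $a_\epsilon=\sigma(d\gamma/dt)$ over the equator homotopy, and solving the variation equation. The result is the endpoint of the $\widetilde G$-path obtained from parallel transport of $\int R_\sigma$, which lands in $Z\widetilde G$. Choosing $\sigma=\sigma_\pm$ flat on each hemisphere, the $A$-homotopy becomes trivial on each half, and the only contribution is the mismatch along the equator. That mismatch is precisely the clutching element, so the two maps agree. The main obstacle I expect is this last step: making the Crainic--Fernandes formula independent of the choice of splitting, since $\sigma_\pm$ do not glue. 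I would handle it by interpolating $\sigma=\phi\sigma_++(1-\phi)\sigma_-$ with a bump function supported in the collar. I would then invoke their independence-of-splitting result and compute $\partial$ as a path-ordered integral supported in the collar, which reduces to the holonomy of $g$ around $S^1$. Sign conventions (orientation of $S^2$, left vs.\ right Maurer--Cartan) must be tracked carefully against the example to get $-\partial^A$.
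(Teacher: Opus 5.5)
Your architecture matches the paper's. You reduce to $S^2$ by naturality of all three invariants, use Meinrenken's two-hemisphere clutching data as the intermediary, and compare with Mackenzie by evaluating the \v{C}ech--singular isomorphism on a triangulation subordinate to a refinement of $\{V_+,V_-\}$. That half is fine in outline, but three points need correcting.
\begin{itemize}
\item A \emph{three}-set good cover of $S^2$ does not exist, because the nerve of a good cover has the homotopy type of $S^2$. This is why the paper uses the tetrahedral cover with $V_+=U_0\cup U_1\cup U_2$ and $V_-=U_3$.
\item The clutching value is the period $\rho(1_S)$ of the quasi-periodic lift $f\colon\widetilde V\to\widetilde G$ with $\Delta(f)=\pi^*\chi$ (Theorem~\ref{gauge1}), where $\chi$ is the Maurer--Cartan part of $\Psi_-^{-1}\circ\Psi_+$. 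It cannot be read off an $\operatorname{Aut}(\g)$-valued map on the annulus, which loses exactly the central information; for abelian $\g$ that map is trivial.
\item The sign is not fixed by the symplectic example; it comes out of the computation. With $s_{i3}=f^{-1}\circ\eta_{i3}$ and lifts $\eta_{i3}$ chosen coherently along the equator, the four-term formula of Proposition~\ref{fundamentalpairing} telescopes to $f(\widetilde E\cdot 1_S)^{-1}=\rho(1_S)^{-1}$.
\end{itemize}

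The genuine gap is in the Crainic--Fernandes versus Meinrenken step. In the $\Psi_-$-frame on the collar, your interpolated splitting $\sigma=\phi\sigma_++(1-\phi)\sigma_-$ has connection form $\phi\chi$. Its curvature is therefore, up to sign, $d\phi\wedge\chi+(\phi^2-\phi)[\chi,\chi]$, which is not center-valued. So the Crainic--Fernandes formula expressing $\partial$ as an integral of the curvature is unavailable. Your assertion that the path-ordered integral ``reduces to the holonomy around $S^1$'' is then the whole content of the comparison, and you give no mechanism for it. Moreover, every loop of the sweep-out is based at $E$, so the collar is met along a whole range of loops, not a single equatorial one, and you would still have to pass between the two frames.

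The paper needs no global splitting and no curvature. It chooses $\varphi\colon I^2\to S^2$ with the diagonal sent to the equator loop $\gamma$, the upper triangle into $V_+$ and the lower triangle into $V_-$. By Lemma~\ref{homotopypaths}, the diagonal $A$-path is $A$-homotopic to $a_0=0$ inside $V_+$ and to $a_1$ inside $V_-$. Transporting by $\Psi_\pm^{-1}$, integrating in the groupoids of the trivial algebroids over $V_\pm$, and applying the integrated gluing automorphism gives $g_-(t)=f(\widetilde\gamma(t))\,g_+(t)$ with $g_+(1)=1$. Hence $\partial(1_{S^2})=g_-(1)=f(\widetilde E\cdot 1_S)=\rho(1_S)$.

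Your route can be repaired along the same lines. In each chart an $A$-homotopy is just a $\g$-valued Maurer--Cartan form on the square, hence a Darboux derivative, and the answer comes from patching these integrals with $f$. Once you do that, however, the interpolation and the curvature play no role.
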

The minus sign is only a matter of conventions, we have decided to keep the original choices in \cite{zbMATH00822678,zbMATH02057953} to help the direct comparison. \smallskip

The proof of Theorem \ref{th1} proceeds by reducing the problem to the case of the sphere; both the monodromy map and the Mackenzie class are natural (with respect to pullbacks of Lie algebroids). This allows us to deduce the general case by pulling back along maps $u\colon S^2\to M$. On the other hand, on the sphere, we identify the pairing with the Mackenzie class with the Meinrenken's version of the monodromy map by an explicit computation, then we identify the Meinrenken and Crainic--Fernandes monodromy maps.
In both cases the main instrument is the direct description of the group of gauge transformations of a trivial algebroid
in terms of the monodromy of the associated Maurer--Cartan forms \cite{zbMATH07480966}. \smallskip

Finally we note that it would be of interest to study the role of integrability obstructions in the globalization problem of the local Lie groupoid integrating a Lie algebroid \cite{zbMATH07176241,zbMATH01724481,zbMATH07453483}.
\smallskip 

We conclude by briefly describing the structure of the paper. In Section $2$ we recall some basic facts on Lie groupoids and algebroids with special care on the gauge transformations. Section $3$ is devoted to the construction of the Mackenzie's cohomological obstruction. Sections $4$ and $5$ deal with the monodromy maps of Crainic--Fernandes and Meinrenken, respectively. Theorem \ref{th1} is proved in section $6$ (in two steps). In the appendix we describe the isomorphism between \v{C}ech and singular cohomology using a \v{C}ech--de Rham type bicomplex and we collect some informations about Darboux derivatives.

\subsubsection*{Acknowledgement} Alessio Giannotta was supported by a scholarship financed by Ministerial Decree no. 118/2023 under the NRRP (NextGenerationEU, Mission 4) and by a Zegna Foundation scholarship supporting a research stay at Université Paris Cité.
Paolo Antonini wishes to thank Ugo Bruzzo, Domenico Fiorenza and Andrea Maffei for many helpful discussions about cohomology and  Eckhard Meinrenken for pointing to his lecture notes about transitive algebroids \cite{Meinnotes}.
Both the authors thank Iakovos Androulidakis and Georges Skandalis for many inspiring conversations.

\subsubsection*{Conventions and notations}
In this paper we adopt the convention that the Lie bracket 
$[X,Y]$ for $X,Y$ in the Lie algebra $\g$ 
of a Lie group $G$
is defined by the right invariant vector fields. This is quite common in the community of people working with groupoids. The notation $\operatorname{ad}_XY=[X,Y]$ still denotes the adjoint representation. \smallskip

Let 
 $\mathfrak{g}$ be a Lie algebra and $M$ be a manifold; for a form $\omega\in \Omega^1(M;\mathfrak{g})$ we put $[\omega,\omega](X,Y)=[\omega(X),\omega(Y)]$.
In particular $\omega$ is a Maurer--Cartan form if $$d\omega+[\omega,\omega]=0,$$ i.e. 
\[X\omega(Y)-Y\omega(X)-\omega([X,Y])-[\omega(X),\omega(Y)]=0
\]
for every $X,Y \in \Gamma(TM).$ \smallskip

On a Lie group $G$, the basic example of a Maurer--Cartan form is the right invariant Maurer--Cartan form 
 $\theta^{R} \in \Omega^1(G,\mathfrak{g})$; the unique right invariant $\mathfrak{g}$-valued form such that $\theta^R_{1}=\operatorname{Id}$. This can be characterized as the unique form such that $\theta^R(X^R)=X$ for every $X\in \mathfrak{g}$. \smallskip 

The Darboux derivative of a smooth function (see the appendix \ref{darboux}) $f:M \to G$ will be denoted with 
$\Delta(f):=f^*\theta^R \in  \Omega^1(M;\g)$.

\section{Background on groupoids and algebroids}
In this section we recall basic informations about Lie groupoids and (transitive) algebroids.
Trying to be quite concise here, we refer to 
\cite{zbMATH00822678,Meinnotes,zbMATH07480966} for the details.
\subsubsection*{Lie groupoids} Let $M$ be a manifold. A Lie groupoid $\mathcal{G}$ over $M$, denoted by $\mathcal{G}\rightrightarrows M $, is a (possibly non-Hausdorff) manifold of \textit{arrows} endowed with two surjective submersions $s,r\colon \mathcal{G}\to M$ called the \textit{source} and \textit{range} maps, an embedding $1\colon M \to \mathcal{G}$ called \textit{unit map} and a multiplication defined on the submanifold of composable arrows $\mathcal{G}^{(2)}=\{(g_2,g_1)\in\mathcal{G}^2\mid \, r(g_1)=s(g_2)\}$. These structure maps satisfy:
\begin{itemize}
\item \textit{associativity}: $(g_3g_2)g_1=g_3(g_2g_1)$;
\item \textit{units}: $1_{r(g)}g=g1_{s(g)}=g$;
\item \textit{inverse}: for all $g\in\mathcal{G}$ there exists an arrow $g^{-1}\in \mathcal{G}$ such that $gg^{-1}=1_{r(g)}$ and $g^{-1}g=1_{s(g)}$.
\end{itemize}
Fix a point $m\in M$. We denote by $\mathcal{G}_m=s^{-1}(m)$ the $s$-fibre of $m$, and by $\mathcal{G}_m^m=s^{-1}(m)\cap r^{-1}(m)$ the isotropy Lie group of $\mathcal{G}$ at $m$. We say that $\mathcal{G}$ is $s$-(simply) connected if, for every $m\in M$, the $s$-fiber $\mathcal{G}_m$ is (simply) connected.

Typical examples of Lie groupoids include Lie groups viewed as Lie groupoids over a point, the pair groupoid $M\times M$ over the manifold $M$, the action groupoid associated with a Lie group action on $M$ and the holonomy groupoid of a foliation on $M$.

By identifying the base manifold with the units submanifold, a morphism between the groupoids $\mathcal{G}\rightrightarrows M $ and $\mathcal{H}\rightrightarrows N $ is a smooth map $F\colon \mathcal{G} \to \mathcal{H}$ such that for all $(g_2,g_1)\in \mathcal{G}^{(2)}$ we have $(F(g_2),F(g_1))\in \mathcal{H}^{(2)}$ and $F(g_2g_1)=F(g_2)F(g_1)$ . 

\smallskip 
As in the case of Lie groups, Lie groupoids also admit an associated infinitesimal object. Let $\mathcal{G}$ be a Lie groupoid over $M$. The \textit{Lie algebroid of} $\mathcal{G}$ is the vector bundle $A(\mathcal{G}):=\left.\operatorname{ker}(ds)\right|_{1_M}$ over $M$, given by the restriction of $\operatorname{ker}(ds)$ on the units together with a bundle map $\sharp=dr\colon A(\mathcal{G})\to TM$, called the \textit{anchor}, and the Lie algebra bracket $[\cdot,\cdot]$  on $\Gamma(A(\mathcal{G}))$ induced from the Lie bracket on $\Gamma(T\mathcal{G})$.
\begin{exm}\label{extrivial}
$M$ be a manifold and let $G$ be a Lie group. The \textit{trivial} Lie groupoid over $M$ with structure group $G$ is the manifold $M\times M\times G$ with source and range maps given by the projection onto the second and first component, respectively. The multiplication is defined as follows:
$$(z,y,g)(y,x,h)=(z,x,gh).$$
Every automorphism $F$ of $M\times M\times G$ is of the form 
$$F(y,x,g)=(y,x,f(y)s(g)f(x)^{-1}),$$
where $f\colon M\to G$ is a smooth map and $s\colon G\to G$ is an automorphism of $G$ \cite[Example 1.2.5]{zbMATH00822678}. The pair $(f,s)$ is uniquely determined after fixing a base point $m\in M$ and requiring $f(m)=1$.

The Lie algebroid of $M\times M\times G$ is the vector bundle $TM\times \mathfrak{g}:=TM\oplus (M\times \mathfrak{g})$, where $\mathfrak{g}$ is the Lie algebra of $G$, with anchor map given by the projection onto $TM$ and bracket:
$$[X\oplus \xi,Y \oplus \eta]=[X,Y]\oplus ([\xi,\eta]+X(\eta)-Y(\xi))$$
 for $X,Y\in \Gamma(TM)$ and $\xi,\eta \in C^{\infty}(M,\g).$ 
\end{exm}
\subsubsection*{Algebroids}Generalizing the algebroid of a Lie groupoid, 
 an algebroid $A$ over $M$, denoted with $A \Rightarrow M$ is given by a vector bundle $A \to M$
with a Lie algebra structure on the sections of $A$ and 
a vector bundle map $\sharp:A \to TM$, the anchor, such that
$$[\xi,f\eta]=f[\xi,\eta]+(\sharp(\xi)f) \eta,\quad \xi,\eta \in \Gamma(A),  \quad f\in C^{\infty}(M).$$ 

Automatically $\sharp$ induces a Lie algebra morphism $\Gamma(A)\to \Gamma(TM).$
\smallskip 

Lie algebroids are ubiquitus in differential geometry: examples include Lie algebras, tangent bundles of manifolds, foliations, algebroids associated to principal bundles, (infinitesimal) Lie group actions and Poisson structures.

An algebroid is transitive when the anchor is surjective. In other words there is an extension 
 \begin{equation}
	\label{ext}
	\xymatrix{L \ar@{^{(}->}^{}[r]&A\ar@{>>}[r]^-{\sharp}&TM}\end{equation} where 
the kernel of the anchor map, $L$ is a Lie algebra bundle called the {\em{adjoint bundle}} of the algebroid. The fibre $L_m$ is called \textit{isotropy} Lie algebra at $m$.
\subsubsection*{Products, pullbacks, morphisms}
The category of Lie algebroids admits products and morphisms. The example \ref{extrivial} corresponds to the product of a Lie algebra $\g \Rightarrow \bullet$ with $TM$ which serves as a local model for any transitive algebroid. We will refer to it as a \textit{trivial} Lie algebroid.
 
 \smallskip
 
Morphisms and pullbacks of algebroids require some care to be defined. 
Morphisms of algebroids over the same base are defined in a straightforward way as vector bundle maps preserving the anchor and inducing Lie algebra morphisms on sections. On the other hand, morphisms of algebroids over different bases will be described after introducing pullbacks.

\smallskip 
Pullbacks of algebroids exist under suitable conditions  \cite{zbMATH00822678,zbMATH08133560}. We will only consider pullbacks of transitive Lie algebroids; for which this construction is always defined. In the following we shall focus on them.

So let $A \Rightarrow M$ be a transitive algebroid and $f:B \to M$ a smooth map. 
First form the pullback square:
\[\begin{tikzcd}
	{f^{!}A} & {f^*A} \\
	TB & {f^*TM}
	\arrow[from=1-1, to=1-2]
	\arrow[from=1-1, to=2-1]
	\arrow["{f^*\sharp}", from=1-2, to=2-2]
	\arrow["df"', from=2-1, to=2-2]
\end{tikzcd}\]
as vector bundles over $B$. 
Thus $f^{!}A=TB \times_{f^*TM} f^*A \subseteq TB \oplus f^*A$ is the fibered product, a smooth subbundle of $TB \oplus f^*A$.

Sections of $f^{!}A$ are 
locally generated over $C^{\infty}(B)$ by
pairs $(X,f^*\xi)$ with $X\in \Gamma(TB)$ and $\xi \in \Gamma(A)$ such that $df(X)=\sharp(\xi) \circ f$.
The anchor of $f^{!}A$ is the projection on $TB$ and the bracket is specified by the requirement
of being compatible with the second projection $f^{!}A \to f^*A$ and the anchor.

In particular it is determined by the formula:
$$[(X,hf^*\xi),(Y,gf^*\eta)]=([X,Y],hg f^*[\xi,\eta]+X(g)f^*\eta-Y(h)f^*\xi),$$
for $X,Y\in \Gamma(TB)$ and $\xi,\eta \in \Gamma(A)$ such that $df(X)=\sharp(\xi) \circ f$ and $df(Y)=\sharp(\eta) \circ f$.

The pullback of a transitive algebroid is transitive, with adjoint bundle $f^*L$ where $L$ is the adjoint bundle of $A$.\smallskip

When the pullback algebroid $f^{!}A$ is defined,
morphisms of algebroids over different bases are easier to describe. 
Given algebroids $A_1 \Rightarrow B$ and $A_2 \Rightarrow M$ 
with $A_2$ transitive, a morphism  $A_1 \to A_2$ 
is a vector bundle map $\Phi:A_1 \to A_2$ covering a smooth map $f:B \to M$ such that:
\begin{itemize}
\item it is anchor preserving:$\sharp_2 \circ \Phi = df \circ \sharp_1$,
\item the induced map $A_1 \to f^{!}A_2, \,\, \xi \mapsto (\sharp_1(\xi),\Phi(\xi))$ is a morphism of algebroids over $B$.
\end{itemize} 

\subsubsection*{The integrability problem} 
As we saw at the beginning of this section, every Lie groupoid has an associated Lie algebroid. This defines the Lie functor $\mathcal{G}\mapsto A(\mathcal{G})$ from the category of Lie groupoids to the category of Lie algebroids \cite[Proposition 3.5.10]{zbMATH00822678}.  
A Lie algebroid $A$ over $M$ is \textit{integrable} if there exists a Lie groupoid $\mathcal{G}$ such that $A= A(\mathcal{G})$ up to isomorphism. When integrable, a Lie algebroid is always integrated to an $s$-simply connected Lie groupoid \cite[Proposition 3.3]{cfddeee2-66aa-333f-9ba9-b0e0e3e227c3}, which is unique up to isomorphism, as follows by the Lie II theorem for Lie algebroids.
\begin{thm}\em{ \cite[Theorem A.1]{MACKENZIE2000445}}\label{lie2} Let $\mathcal{G}$ and $\mathcal{H}$ be Lie groupoids over $M$ and $N$, respectively. Suppose that $\mathcal{G}$ is $s$-simply connected and let $\Psi\colon A(\mathcal{G})\to A(\mathcal{H})$ be a Lie algebroid morphism. Then there exists a unique Lie groupoid morphism $F\colon \mathcal{G}\to \mathcal{H}$ such that $A(F)=\Psi$.
\end{thm}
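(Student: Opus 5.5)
We would prove existence by integrating $\Psi$ along paths in the $s$-fibres of $\mathcal{G}$, and uniqueness by an ODE argument. Write $f\colon M\to N$ for the base map of $\Psi$.

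\emph{Uniqueness.} Let $F_1,F_2$ be groupoid morphisms with $A(F_1)=A(F_2)=\Psi$. Fix $x\in M$, $g\in\mathcal{G}_x$, and a smooth path $g(t)$ in $\mathcal{G}_x$ from $1_x$ to $g$. Define $a(t)\in A(\mathcal{G})_{r(g(t))}$ by right translating $\dot g(t)$ back to the units; it is tangent to the $s$-fibre, so right translation is legitimate. Because each $F_i$ is multiplicative, $h_i(t)=F_i(g(t))$ solves the right-invariant ODE
\[
\dot h(t)=\Psi(a(t))\cdot h(t), \qquad h(0)=1_{f(x)},
\]
where $\cdot$ denotes right translation in $\mathcal{H}$. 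ODE uniqueness gives $h_1=h_2$, hence $F_1(g)=F_2(g)$. This part uses only that the $s$-fibres are connected.

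\emph{Existence.} We use the same recipe as a definition. For $g\in\mathcal{G}_x$, choose a path $g(t)$ in $\mathcal{G}_x$ from $1_x$ to $g$, form the $A(\mathcal{G})$-path $a(t)$ as above, and set $F(g):=h(1)$, where $h$ solves the ODE in $\mathcal{H}$. First, the ODE must have a solution on all of $[0,1]$. We would note that $\Psi(a)$ is an $A(\mathcal{H})$-path over the base path $f\circ r\circ g$, because $\Psi$ preserves anchors. We would then solve locally by extending $\Psi(a)$ to a time-dependent section and flowing its right-invariant vector field. Since the right-hand side is linear in the right-translation sense, solutions extend across the whole interval: one can concatenate local solutions using right translations. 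Second, and this is the main point, $h(1)$ must be independent of the chosen path. Here we would use $s$-simple connectedness: any two such paths are homotopic in $\mathcal{G}_x$ relative to endpoints. Differentiating the homotopy gives an $A(\mathcal{G})$-homotopy $(a_\epsilon,b_\epsilon)$ in the sense of Crainic--Fernandes, characterized by the equation
\[
\partial_t b-\partial_\epsilon a=[a,b]
\]
in a connection-free formulation. Because $\Psi$ is a Lie algebroid morphism (via the pullback $f^{!}A(\mathcal{H})$), $\Psi$ carries $A(\mathcal{G})$-homotopies to $A(\mathcal{H})$-homotopies. The corresponding family of solutions $h_\epsilon(t)$ then satisfies a compatibility which forces $\partial_\epsilon h_\epsilon(1)=0$. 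We would check this last claim by a direct computation in local trivializations of the pullback algebroid.

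\emph{Remaining properties.} Smoothness of $F$ follows from smooth dependence of ODE solutions on parameters, using paths chosen locally smoothly in $g$. Multiplicativity $F(g_2g_1)=F(g_2)F(g_1)$ follows by concatenating the path to $g_1$ with the right translate by $g_1$ of a path to $g_2$; right invariance of the ODE shows that the solution transforms in the same way. Finally, $A(F)=\Psi$ holds by differentiating at $t=0$.

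The main obstacle is the homotopy invariance step, namely that $\Psi$ maps $A$-homotopies to $A$-homotopies and that $A$-homotopic paths integrate to the same endpoint. If that step proves too messy, the fallback is the graph argument. The graph of $\Psi$ is a subalgebroid of $A(\mathcal{G})\times A(\mathcal{H})$, and we would integrate it to an $s$-connected immersed subgroupoid $\mathcal{K}\subseteq\mathcal{G}\times\mathcal{H}$ by taking leaves of the right-invariant distribution through the units. The projection $\mathcal{K}\to\mathcal{G}$ restricts to a local diffeomorphism on $s$-fibres, and one shows it is a covering on $s$-fibres. Since $\mathcal{G}$ is $s$-simply connected and $\mathcal{K}$ is $s$-connected, this projection is an isomorphism, and we set $F=\mathrm{pr}_2\circ\mathrm{pr}_1^{-1}$. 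In this route the hard point moves to the covering (path-lifting) property, which needs completeness of right-invariant flows along $s$-fibres.
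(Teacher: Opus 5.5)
\textbf{Verdict.} Your proposal is correct, but it follows a different route from the one behind the paper's citation.

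\textbf{What the paper does.} The paper gives no proof of this statement; it quotes it from Mackenzie--Xu. As far as I recall, their argument is essentially the one you keep as a fallback: integrate the graph of $\Psi$, a subalgebroid of $A(\mathcal{G})\times A(\mathcal{H})=A(\mathcal{G}\times\mathcal{H})$, to an $s$-connected subgroupoid $\mathcal{K}\subseteq\mathcal{G}\times\mathcal{H}$. Then show that $\mathrm{pr}_1\colon\mathcal{K}\to\mathcal{G}$ is a covering on $s$-fibres, hence an isomorphism by $s$-simple connectedness, and set $F=\mathrm{pr}_2\circ\mathrm{pr}_1^{-1}$. The completeness worry you raise for that route is handled by the same right-translation gluing of local solutions that you already use in your main argument.

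\textbf{Your main route.} You instead give the Crainic--Fernandes path-space proof, and it goes through.
\begin{itemize}
\item \emph{The deferred computation.} The step you leave to ``a direct computation'' is cleanest without trivializations. Let $h(\epsilon,t)$ integrate $\Psi(a_\epsilon)$ and set $\tilde b:=D^R_\epsilon h$. Then $\Psi(a)\,dt+\tilde b\,d\epsilon$ is again an algebroid morphism $TI^2\to A(\mathcal{H})$, since it is the right Maurer--Cartan form of $h\colon I^2\to\mathcal{H}_{f(x)}$. For fixed $a$, the homotopy equation is a linear ODE in $t$ for the $d\epsilon$-component. Since $\tilde b(\epsilon,0)=0=\Psi(b)(\epsilon,0)$, uniqueness gives $\tilde b=\Psi(b)$. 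Hence $\partial_\epsilon h(\epsilon,1)=0$.
\item \emph{The morphism property.} Because $\mathcal{H}$ is arbitrary, $f^{!}A(\mathcal{H})$ need not be a smooth bundle. So take the morphism property of $\Psi\circ(a\,dt+b\,d\epsilon)$ from a definition that does not rely on it, for instance compatibility with the Chevalley--Eilenberg differentials.
\end{itemize}

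\textbf{What each route buys.} Your route uses only ODE theory and the $A$-path language of Sections 4--5. Its cost is that smoothness of $F$ needs an explicit locally smooth choice of $s$-fibre paths as the source varies. One way is to follow flows of time-dependent right-invariant fields and then a short path in a chart adapted to $s$ near the units. The graph route gets smoothness and multiplicativity for free, because $\mathcal{K}$ is a Lie subgroupoid. Its price is the theorem that subalgebroids integrate, plus the covering argument on $s$-fibres.
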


In the transitive case the integrability problem can be phrased in the world of principal bundles.

Recall that a Lie groupoid is transitive when it has one orbit i.e. for any point $m\in M$ the map $r:\mathcal{G}_m:=s^{-1}(m) \to M$ is surjective. Then $\mathcal{G}_m$ is a (locally trivial) $G=\mathcal{G}_m^m$-principal bundle. The associated Atiyah sequence (see \cite{zbMATH00822678,zbMATH07480966}) defines a transitive algebroid on $M$.
On the other hand from a principal bundle, one has an associated transitive groupoid, its {\em{Gauge groupoid}}.

Start with an integrable transitive Lie algebroid $A \Rightarrow M$; any integrating groupoid is transitive and any $s$-fiber 
defines a principal bundle whose Atiyah sequence recovers $A$.

However this correspondence involves the choice of basepoints
and, as explained by Mackenzie \cite[Chapter 1]{zbMATH00822678} it is prefeable not to identify 
the category of principal bundles with that of transitive Lie groupoids.

\subsubsection*{Flat connections and isomorphisms}
Let $A \Rightarrow M$ be a transitive algebroid.
A {\em{connection}} $\Theta: TM \to A$ on $A$ is a splitting  
of the sequence \eqref{ext}
as vector bundles. Associated with $\Theta$ there is a curvature form $\overline{R}_{\Theta}\in \Omega^2(M,L)$ defined by
$$\overline{R}_{\Theta}(X,Y):=
\Theta[X,Y]-
[\Theta X,\Theta Y],$$
for $X,Y \in \Gamma(TM)$.
The curvature measures the failure of the connection of being a splitting of Lie algebroids in which case it is called {\em{flat}}.

We use flat connections to describe isomorphisms of algebroids $\Psi:TM \times \mathfrak{g} \longrightarrow A$.
Indeed $\Psi$ breaks into two pieces:

\begin{itemize}
\item 
a trivialization of Lie algebra bundles
$\psi:M \times \mathfrak{g}\longrightarrow L$, 
\item a flat connection $\Theta: TM \longrightarrow A$
\end{itemize}
in a way that $\Psi(X,\xi)=\Theta(X)+\psi(\xi)$.

These two objects obey a compatibility condition: $\psi(X(\xi))=[\Theta (X),\psi (\eta)]$ for every $X\in \Gamma(TM)$ and $\xi\in C^{\infty}(M,\mathfrak{g})$.
Vice versa a compatible pair $(\Theta,\psi)$ gives rise to an isomorphism $TM \times \g \to A$.

\subsubsection*{Gauge transformations}
Let $TM \times \mathfrak{g}$ be a trivial Lie algebroid. The group of gauge transformations $\operatorname{Gauge}(TM \times \mathfrak{g})$ consists of all Lie algebroid automorphisms of $TM \times \mathfrak{g}$ covering the identity on $M$.

 For the trivial Lie algebroid $TM\times \mathfrak{g}$, a connection $\Theta$ is given by
 $$ \Theta(X)=X\oplus \theta(X),$$
 where $\theta\in\Omega^1(M,\mathfrak{g})$ and a trivialization $\psi\colon M\times \mathfrak{g}\longrightarrow L$ has the form
 $$ \psi(m,\xi)=0\oplus \Phi_m(\xi), $$
for some $\Phi\in C^{\infty} (M,\operatorname{Aut}(\mathfrak{g}))$.

 Therefore, $\operatorname{Gauge}(TM \times \mathfrak{g})$ can be identified with the set of pairs $(\theta,\Phi)$ where $\theta\in\Omega^1(M,\mathfrak{g})$ is a Maurer--Cartan form, i.e.
\footnote{meaning exactly that $\Theta:TM \to TM\times \mathfrak{g}$ is a flat connection, i.e. a Lie algebroid morphism}

\begin{equation}\label{comp1}d\theta +[\theta,\theta]=0,\end{equation}
 and $\Phi \in C^{\infty}(M,\operatorname{Aut}(\mathfrak{g}))$ satisfies the compatibility condition
 \begin{equation}\label{comp2}
d\Phi+\operatorname{ad}_{\theta} \circ \, \Phi=0.
\end{equation}
Explicitly, this reads 
 \[ 
d\Phi(X)(\xi)+[\theta(X),\Phi(\xi)]=0, \quad X\in TM, \, \xi \in \mathfrak{g}.
\]
The associated automorphism of $TM\times \mathfrak{g}$ is given pointwisely by  
$$ \Psi(X\oplus \xi)=X\oplus \theta(X)+\Phi(\xi). $$

\subsubsection*{Framings} Following \cite{zbMATH07480966}, a  framing for $A \Rightarrow M$ at $m\in M$ is the choice of an isomorphism $\mathfrak{g} \to \iota^{!}_m A=L_m $ where $\iota_m$ is the inclusion of $m$ in $M$. Any frame can be extended to an isomorphism of algebroids $TU \times \mathfrak{g}  \to A\rest{U}$
(restricting to the given framing)
over any open contractible neighborhood of $m$. The extension is unique up to smooth isotopy \cite[Proposition 3.6]{zbMATH07480966}.
\smallskip 

The following expresses the gauge group in terms of the universal cover of $M$ and the group $\widetilde{G}$. 
\begin{thm}\label{gauge1}\em{(\cite[Proposition 3.7]{zbMATH07480966})}\label{gauge1}
Let $m \in M$ be a reference point and let $\widetilde{M}$ be the universal covering of $M$ with projection $\pi: \widetilde{M}\to M$, fixed point $\widetilde{m}$ above $m$ and right action of $\pi_1(M,m)$. 

There is a $1 : 1$ correspondence between $\operatorname{Gauge}(TM \times \mathfrak{g})$ and the set of triples $(\rho,f,\psi)$ such that 
\begin{itemize}
\item $\rho: \pi_1(M,m) \to Z\widetilde{G}$ is a representation;
\item $f: \widetilde{M} \to \widetilde{G}$ is a smooth quasi-periodic smooth map with $f(\widetilde{m})=1$, i.e. $f(\widetilde x \cdot \gamma)=f(\widetilde x)\rho(\gamma)$ for $\widetilde x\in \widetilde{M}$ and $\gamma \in \pi_1(M,m)$,
\item $\psi \in \operatorname{Aut}(\mathfrak{g})$.
\end{itemize}
The triple $(\rho,f,\psi)$ defines a gauge transformation of $\pi^{!}(TM \times \g)\cong T\widetilde{M}\times \g$,
$$\widetilde{X} \oplus \xi \longmapsto \widetilde{X}\oplus \Delta(f)(\widetilde{X}) +(\operatorname{Ad}_f \circ \,\psi)\,  \xi, $$
which descends to an element of $\operatorname{Gauge}(TM \times \mathfrak{g})$.
Moreover, after fixing a frame at $m$, the automorphisms preserving the frame are exactly those with $\psi=\operatorname{Id}$.
\end{thm}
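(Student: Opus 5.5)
The idea is to pass to the universal cover $\pi\colon\widetilde M\to M$, where a Maurer--Cartan form can be integrated globally, and then read off the equivariance data. Let $(\theta,\Phi)$ be a gauge transformation, so that \eqref{comp1} and \eqref{comp2} hold. Pulling back gives a Maurer--Cartan form $\pi^*\theta\in\Omega^1(\widetilde M,\g)$ on a simply connected manifold. By the fundamental theorem for Darboux derivatives (appendix \ref{darboux}) there is a unique smooth $f\colon\widetilde M\to\widetilde G$ with $f(\widetilde m)=1$ and $\Delta(f)=\pi^*\theta$.

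\emph{Deck equivariance of $f$.} Since $\theta^R$ is right invariant, $\Delta(f\cdot c)=\Delta(f)$ for every constant $c\in\widetilde G$. Conversely, on a connected manifold two maps with the same Darboux derivative differ by right multiplication by a constant. The form $\pi^*\theta$ is invariant under deck transformations, so $f(\widetilde x\cdot\gamma)$ and $f(\widetilde x)$ have the same Darboux derivative. Hence
\[
f(\widetilde x\cdot\gamma)=f(\widetilde x)\rho(\gamma),\qquad \rho(\gamma):=f(\widetilde m\cdot\gamma).
\]
Iterating, $f(\widetilde x\gamma\delta)=f(\widetilde x\gamma)\rho(\delta)=f(\widetilde x)\rho(\gamma)\rho(\delta)$. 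This shows $\rho$ is a homomorphism (with the convention of a right action).

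\emph{Recovering $\psi$ from $\Phi$.} Differentiating $\operatorname{Ad}_f$ gives $d\operatorname{Ad}_f=-\operatorname{ad}_{\Delta(f)}\circ\operatorname{Ad}_f$, where the sign is the one produced by the right-invariant bracket convention; this is exactly the sign needed to match \eqref{comp2}. Hence both $\pi^*\Phi$ and $\operatorname{Ad}_f$ solve the same linear ODE $dY+\operatorname{ad}_{\pi^*\theta}\circ Y=0$. It follows that $\operatorname{Ad}_f^{-1}\circ\pi^*\Phi$ is locally constant, hence equal to the constant $\psi:=\Phi_m\in\operatorname{Aut}(\g)$, and so $\pi^*\Phi=\operatorname{Ad}_f\circ\psi$.

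\emph{Centrality of $\rho$.} Because $\pi^*\Phi$ is deck invariant, we get $\operatorname{Ad}_{f(\widetilde x)}\operatorname{Ad}_{\rho(\gamma)}\psi=\operatorname{Ad}_{f(\widetilde x)}\psi$, so $\operatorname{Ad}_{\rho(\gamma)}=\operatorname{Id}$. Since $\widetilde G$ is connected, this forces $\rho(\gamma)\in Z\widetilde G$.

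\emph{Converse.} Given a triple $(\rho,f,\psi)$, the form $\Delta(f)$ is deck invariant because $\Delta$ is unchanged by right translations. The map $\operatorname{Ad}_f\circ\psi$ is deck invariant because $\rho$ is central. So both descend to a pair $(\theta,\Phi)$ on $M$. Condition \eqref{comp1} holds because Darboux derivatives are Maurer--Cartan forms, and \eqref{comp2} holds by the derivative formula for $\operatorname{Ad}_f$ above.

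\emph{Bijectivity and frames.} The two constructions are mutually inverse, using uniqueness of $f$ under the normalization $f(\widetilde m)=1$. For the last claim, $\Phi_m=\operatorname{Ad}_{f(\widetilde m)}\psi=\psi$. So after fixing a frame at $m$, the gauge transformation preserves the frame exactly when $\psi=\operatorname{Id}$.

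The main obstacle I expect is bookkeeping of signs and sides. One must check that, with right-invariant Maurer--Cartan forms and the right-invariant bracket, one has $d\operatorname{Ad}_f=-\operatorname{ad}_{\Delta f}\operatorname{Ad}_f$, which matches \eqref{comp2} exactly. One must also check that the right action of $\pi_1$ is compatible with writing $\rho(\gamma)$ on the right, so that $\rho$ is a homomorphism rather than an anti-homomorphism. I would verify both in the model case $M=\widetilde M=\R$ before committing.
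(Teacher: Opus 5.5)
Your argument is correct, and it takes a different route from the paper.

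\emph{The paper's route.} The paper works at the groupoid level. It integrates $\Psi$ via Lie II (Theorem \ref{lie2}) to an automorphism $F$ of the $s$-simply connected groupoid $\frac{\widetilde M\times\widetilde M}{\pi_1(M,m)}\times\widetilde G$, and lifts it to the trivial groupoid $\widetilde M\times\widetilde M\times\widetilde G$. It then uses Mackenzie's normal form $(\widetilde y,\widetilde x,g)\mapsto(\widetilde y,\widetilde x,f(\widetilde y)\,s(g)\,f(\widetilde x)^{-1})$ for automorphisms of trivial groupoids. Quasi-periodicity and centrality of $\rho$ are read off from compatibility with the $\pi_1(M,m)$-quotient. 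Only at the end does it differentiate, to obtain $\Delta(f)=\pi^*\theta$ and $\operatorname{Ad}_f\circ\psi=\Phi$.

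\emph{Your route.} You stay infinitesimal throughout:
\begin{itemize}
\item $f$ comes from the fundamental theorem for Darboux derivatives;
\item $\rho$ comes from uniqueness up to right translation;
\item $\psi=\Phi_m$ comes from uniqueness for the linear equation \eqref{comp2};
\item centrality comes from $\ker\operatorname{Ad}=Z\widetilde G$ for connected $\widetilde G$.
\end{itemize}
The sign you flagged does work out. With the right-invariant bracket one gets $d\operatorname{Ad}_f=-\operatorname{ad}_{\Delta(f)}\circ\operatorname{Ad}_f$, which agrees with the appendix formula and with \eqref{comp2}. Likewise, $\Delta(f)$ satisfies \eqref{comp1} in the paper's convention.

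\emph{What each buys.} Your version is more elementary and self-contained, since it needs only the appendix facts on Darboux derivatives. It also makes the converse direction and the bijectivity fully explicit, whereas the paper only sketches the existence and uniqueness of the lift $\widetilde F$ and the converse. The paper's version, in exchange, produces the integrated groupoid automorphism $F([\widetilde y,\widetilde x],g)=([\widetilde y,\widetilde x],f(\widetilde y)\,s(g)\,f(\widetilde x)^{-1})$ at the same time. That formula is exactly what the proof of Theorem \ref{Crainic=Meinrenken} uses when $\Psi_{-|+}$ is integrated and $\mathcal{G}$-paths are compared. With your proof, one would need a separate (easy) check that this $F$ differentiates to the given algebroid automorphism.
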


\begin{proof}
This was proved by Meinrenken; we write a slightly different proof based on theorem \ref{lie2}. Let $\Psi$ be the automorphism of $TM \times \g$ associated with a pair  $(\theta,\Phi)$ satisfying 
\eqref{comp1} and \eqref{comp2}, and given by
\begin{equation}\label{automorphismtrivial}
\Psi(X\oplus \xi)=X\oplus \theta(X)+\Phi(\xi).
\end{equation}
The $s$-simply connected groupoid 
integrating $TM \times \g$ is $\mathcal{G}:=\dfrac{\widetilde{M}\times \widetilde{M}}{\pi_1(M,m)} \times \widetilde{G}$ and $\Psi$ integrates to a Lie groupoid automorphism $F$ of $\mathcal{G}$. There exists a unique lift $\widetilde F$ to an automorphism of the trivial groupoid $\widetilde M\times \widetilde M \times \widetilde G $ making the diagram:
\begin{equation}\label{commdiagr}
\begin{tikzcd}
	{\widetilde M\times \widetilde M \times \widetilde G} & {\widetilde M\times \widetilde M \times \widetilde G} \\
	{\dfrac{\widetilde{M}\times \widetilde{M}}{\pi_1(M,m)}\times \widetilde{G}} & {\dfrac{\widetilde{M}\times \widetilde{M}}{\pi_1(M,m)}\times \widetilde{G}}
	\arrow["\widetilde{F}", from=1-1, to=1-2]
	\arrow["q\times \operatorname{Id}_{\widetilde{G}}", from=1-1, to=2-1]
	\arrow["q\times \operatorname{Id}_{\widetilde{G}}", from=1-2, to=2-2]
	\arrow["F", from=2-1, to=2-2]
\end{tikzcd}
\end{equation}
commute.

By example \ref{extrivial}, there exists a unique pair $(f,s)$ with $f\colon \widetilde{M}\to \widetilde{G}$, $f(\widetilde{m})=1$, and $s\in \operatorname{Aut}(\widetilde{G})$ such that
\begin{equation*}
\widetilde{F}(\widetilde{y},\widetilde{x},g)=(\widetilde{y},\widetilde{x},f(\widetilde{y})s(g)f(\widetilde{x})^{-1}).
\end{equation*}

For $\gamma\in \pi_1(M,m)$, the identity $F([\widetilde{y},\widetilde{x}],1)=F([\widetilde{y}\gamma,\widetilde{x}\gamma],1)$ implies
$f(\widetilde x)^{-1}f(\widetilde x\gamma)=f(\widetilde y)^{-1}f(\widetilde y\gamma)$, for all $\widetilde x, \widetilde y\in \widetilde{M}$. Thus, there exists a map $\rho\colon \pi_1(M,m)\to \widetilde{G}$ such that $f(\widetilde{x}\gamma)=f(\widetilde{x})\rho(\gamma)$. Since $F$ is a groupoid morphism, $\rho$ is a group homomorphism. Furthermore, the identity $F([\widetilde{y},\widetilde{x}],g)=F([\widetilde{y}\gamma,\widetilde{x}\gamma],g)$, implies that $\rho(\pi_1(M,x_0))\subseteq Z\widetilde{G}$.

By the diagram \eqref{commdiagr}, the automorphism $A(\widetilde{F})$ of $T\widetilde{M}\times \g$ induced from $\widetilde{F}$ is:
\begin{equation}
A(\widetilde{F})(\widetilde{X} \oplus \xi) = \widetilde{X}\oplus \Delta(f)(\widetilde{X}) +(\operatorname{Ad}_f \circ \,\psi)\,  \xi 
\end{equation}
where $\psi\in \operatorname{Aut}(\mathfrak{g})$ is the Lie algebra automorphism associated with $s$. Since $\widetilde{F}_*$ descends to $\Psi$, we obtain:
\begin{equation}\label{Conditionspullbackgauge}
\Delta(f)=q^*\theta,\quad \operatorname{Ad}_f \circ\,\psi=\Phi.
\end{equation}

Conversely, any triple $(\rho,f,\psi)$ uniquely determines a Lie groupoid automorphism of $\widetilde M\times \widetilde M \times \widetilde G$ descending to an automorphism of $\mathcal{G}$ and hence induces an automorphism of the Lie algebroid $TM\times\mathfrak{g}$.
\end{proof}

\section{The Mackenzie class}

\subsubsection*{Transition datas}
The starting point of the construction of the Mackenzie obstruction class is the local triviality of any transitive algebroid \cite[Theorem 8.2.1]{zbMATH00822678}.
A short proof using flows is given in \cite[Proposition 3.6]{zbMATH07480966}. \smallskip 

We can thus find an open contractible cover $\{U_i\}_i$ of $M$ together with Lie algebroid isomorphisms
(of algebroids)
$\Psi_i\:TU_i  \times \mathfrak{g} \longrightarrow A\rest{U_i}$. 
Decomposing these 
into trivializations of Lie algebra bundles $\psi_i$ and flat connections $\Theta_i$,
we get a collection $\{U_i,\psi_i,\Theta_i\}_i$ called a {\emph{Lie algebroid atlas}} for $A$. \smallskip 

We examine the behavior of these local isomorphisms on non empty intersections.
\noindent Let $a_{ij}:U_{ij} \longrightarrow \operatorname{Aut}(\mathfrak{g})$ be the associated transition functions: $a_{ij}=\psi_i^{-1}\circ\, \psi_j$; then we may define
 connection forms
$\ell_{ij}\in \Omega^1(U_{ij},L)$ and
$\chi_{ij}\in \Omega^1(U_{ij},\mathfrak{g})$ according to
$$ \ell_{ij}=\Theta_j-\Theta_i, \quad \chi_{ij}:=\psi_{i}^{-1}\circ \ell_{ij}.$$

These determine the transition map:
$${\Psi}_i^{-1} \circ \left( {\Psi}_j \rest{{TU_{ij}}\times \g}\right): TU_{ij} \times \g \longrightarrow TU_{ij} \times \g,$$
$$X\oplus\xi \longmapsto X\oplus{\chi}_{ij}(X)+ {a}_{ij}(\xi).$$

\begin{prop}\cite[Theorem 8.2.4]{zbMATH00822678}. \label{transition forms}
The forms $\chi_{ij}$ satisfy:
\begin{enumerate}\label{transition forms}
\item $d\chi_{ij}+[\chi_{ij},\chi_{ij}]=0$ i.e. each form $\chi_{ij}$ is a Maurer--Cartan form,
\item $\chi_{ik}=\chi_{ij}+a_{ij}(\chi_{jk})$ on $U_{ijk}$,
\item $\Delta(a_{ij})=\operatorname{ad}\circ \chi_{ij}$ on $U_{ij}$ with respect to the right Darboux derivative $\Delta$ for the group $\operatorname{Aut}(\mathfrak{g})$.
\end{enumerate}
\end{prop}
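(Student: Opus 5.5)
The plan is to differentiate the identities relating the two trivializations on overlaps. On $U_{ij}$ we have $\Psi_j = \Psi_i\circ T_{ij}$, where $T_{ij}(X\oplus\xi)=X\oplus\chi_{ij}(X)+a_{ij}(\xi)$. The map $T_{ij}$ is a composition of Lie algebroid isomorphisms, so it is an element of $\operatorname{Gauge}(TU_{ij}\times\g)$. By the description of the gauge group given before Theorem~\ref{gauge1}, the pair $(\chi_{ij},a_{ij})$ satisfies \eqref{comp1} and \eqref{comp2}.

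\emph{Item (1).} This is exactly \eqref{comp1} for $\theta=\chi_{ij}$. As a direct check, flatness of $X\mapsto X\oplus\chi_{ij}(X)$ in the bracket of Example~\ref{extrivial} reads
\[
\chi_{ij}([X,Y])=[\chi_{ij}(X),\chi_{ij}(Y)]+X\chi_{ij}(Y)-Y\chi_{ij}(X),
\]
which is the Maurer--Cartan equation in the paper's convention.

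\emph{Item (3).} Condition \eqref{comp2} gives $da_{ij}(X)=-\operatorname{ad}_{\chi_{ij}(X)}\circ a_{ij}$. The Darboux derivative of $a_{ij}\colon U_{ij}\to\operatorname{Aut}(\g)$ is $da_{ij}\,a_{ij}^{-1}$, computed with the right-invariant form. It therefore equals $\operatorname{ad}\circ\chi_{ij}$, up to the sign determined by the bracket convention: with right-invariant brackets, the Lie algebra bracket of $\operatorname{Der}(\g)$ is minus the commutator. The one point needing care is to fix this convention consistently with the appendix so that the sign comes out as stated. I expect this sign bookkeeping to be the main obstacle rather than any conceptual step.

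\emph{Item (2).} Compose transition maps on $U_{ijk}$ using $\Psi_i^{-1}\Psi_k=(\Psi_i^{-1}\Psi_j)(\Psi_j^{-1}\Psi_k)$. Apply this to $X\oplus 0$:
\[
T_{ij}\bigl(X\oplus\chi_{jk}(X)\bigr)=X\oplus\chi_{ij}(X)+a_{ij}(\chi_{jk}(X)),
\]
and compare with $T_{ik}(X\oplus0)=X\oplus\chi_{ik}(X)$.

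Alternatively, one can argue directly from the definitions. We have $\ell_{ik}=\ell_{ij}+\ell_{jk}$, and applying $\psi_i^{-1}$ gives
\[
\chi_{ik}=\chi_{ij}+\psi_i^{-1}\psi_j\psi_j^{-1}\ell_{jk}=\chi_{ij}+a_{ij}\chi_{jk}.
\]
Items (1) and (3) can likewise be derived intrinsically. For (1), expand flatness of $\Theta_j=\Theta_i+\ell_{ij}$. For (3), differentiate $a_{ij}=\psi_i^{-1}\psi_j$ using the compatibility condition $\psi(X\xi)=[\Theta X,\psi\xi]$ for both charts. The difference $[\Theta_jX,\cdot]-[\Theta_iX,\cdot]$ on $L$ is $\operatorname{ad}_{\ell_{ij}(X)}$, which transported by $\psi_i$ is $\operatorname{ad}_{\chi_{ij}(X)}$.
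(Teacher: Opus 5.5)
Your proof is correct, and it follows the route the paper itself sets up. The paper gives no separate proof and only cites Mackenzie, but it has just written $\Psi_i^{-1}\circ\Psi_j$ as $X\oplus\xi\mapsto X\oplus\chi_{ij}(X)+a_{ij}(\xi)$, an element of $\operatorname{Gauge}(TU_{ij}\times\g)$; so (1) and (3) are \eqref{comp1}--\eqref{comp2}, and (2) comes from $\ell_{ik}=\ell_{ij}+\ell_{jk}$, exactly as you argue. The sign you left open in (3) comes out as stated: there $\operatorname{ad}$ must be read as the differential at the identity of $\operatorname{Ad}\colon\widetilde G\to\operatorname{Aut}(\g)$, as in the recipe $\Delta(\operatorname{Ad}\circ s_{ij})=\operatorname{ad}\circ\Delta(s_{ij})$, and with right-invariant brackets this differential is $\xi\mapsto-[\xi,\cdot\,]$ (the appendix formula $d\operatorname{Ad}_f=-\operatorname{ad}^{\mathsf r}_{\Delta(f)}\circ\operatorname{Ad}_f$), so your $da_{ij}\,a_{ij}^{-1}=-[\chi_{ij},\cdot\,]$ is exactly $\operatorname{ad}\circ\chi_{ij}$.
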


\begin{defn}
A \emph{system of transition datas} $\{U_i,a_{ij},\chi_{ij}\}_i$ \emph{for a transitive algebroid} is given by an open cover $\{U_i\}$, a cocycle $\big{\{}a_{ij}:U_{ij} \longrightarrow \operatorname{Aut}(\mathfrak{g})\big{\}}_i$ and forms $\chi_{ij}$ satisfying the properties $(1),(2)$ and $(3)$ in Proposition \ref{transition forms}.
\end{defn}
A system of transition datas completely recovers $A$.It is then natural to ask under which condition these systems can be integrated to a Lie groupoid.

\subsubsection*{Transition datas from a groupoid}\label{derivation}
Start with a transitive Lie groupoid $\mathcal{G}\rightrightarrows  M$ with Lie algebroid $A=A(\mathcal{G})$. We get a system of transition datas applying the Lie functor.
\smallskip 

 Fix a reference point $m\in M$ and consider the principal bunde $r\colon \mathcal{G}_m\to M$ with structure group $G:= \mathcal{G}_m^m$.
\begin{defn}
A section atlas consists of a collection $\{U_i, \sigma_i\}_i$ where $\{U_i\}$ is an open cover and each $\sigma_i\colon U_i\to \mathcal{G}_m$ is a section of the range map.
\end{defn} 
Given a section atlas we obtain a $G$-valued cocycle
$\sigma_{ij}(x):=\sigma_i(x)^{-1}\sigma_j(x)$ for $x\in U_{ij}$. The following proposition although immediate, suggests the right choice of the ingredients
for the construction of the obstruction class.
\begin{prop}\label{propcocycle}
A section atlas $\{U_i,\sigma_i\}_i$ of a transitive groupoid $\mathcal{G}$ induces a 
family of trivializations for $A(\mathcal{G})$ whose associated 
system of transitions datas $\{U_i,a_{ij},\chi_{ij}\}_i$ has the properties:
\begin{enumerate}
\item $a_{ij}=\operatorname{Ad} \circ \,\sigma_{ij}$ on $U_{ij}$ with respect to the adjoint representation $\operatorname{Ad}:G \to \operatorname{Aut}(\mathfrak{g})$.
\item $\Delta(\sigma_{ij})=\chi_{ij}$ on $U_{ij}$.
\end{enumerate}
\end{prop}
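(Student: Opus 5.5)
Given the section atlas $\{U_i,\sigma_i\}_i$, I will build the trivializations by differentiating the sections, and then read off the transition data directly from the groupoid. Each $\sigma_i\colon U_i\to\mathcal{G}_m$ yields a local trivialization of the principal bundle $r\colon\mathcal{G}_m\to M$ over $U_i$, namely $U_i\times G\to \mathcal{G}_m$, $(x,g)\mapsto\sigma_i(x)g$. It also yields a local trivialization of the groupoid,
$$\Sigma_i\colon U_i\times U_i\times G\to \mathcal{G}\rest{U_i},\qquad (y,x,g)\mapsto \sigma_i(y)\,g\,\sigma_i(x)^{-1},$$
which is an isomorphism of Lie groupoids onto the restriction $\mathcal{G}\rest{U_i}=s^{-1}(U_i)\cap r^{-1}(U_i)$. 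Applying the Lie functor, I set $\Psi_i:=A(\Sigma_i)\colon TU_i\times\g\to A\rest{U_i}$; this is a Lie algebroid isomorphism by functoriality, using Example \ref{extrivial} for the algebroid of the trivial groupoid. Decomposing $\Psi_i$ gives $\psi_i$ and $\Theta_i$, hence a Lie algebroid atlas.

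Next I compute the transition maps $\Psi_i^{-1}\circ\Psi_j$ on $U_{ij}$. These are $A(\Sigma_i^{-1}\circ\Sigma_j)$, and the groupoid automorphism of $U_{ij}\times U_{ij}\times G$ is
$$\Sigma_i^{-1}\Sigma_j(y,x,g)=\bigl(y,x,\sigma_i(y)^{-1}\sigma_j(y)\,g\,\sigma_j(x)^{-1}\sigma_i(x)\bigr)=\bigl(y,x,\sigma_{ij}(y)\,g\,\sigma_{ij}(x)^{-1}\bigr).$$
This is exactly the form described in Example \ref{extrivial}, with $f=\sigma_{ij}$ and $s=\operatorname{Id}$.

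I then differentiate this automorphism at the units. The key computation is the Lie functor applied to an automorphism $(y,x,g)\mapsto(y,x,f(y)s(g)f(x)^{-1})$. I will do it exactly as in the proof of Theorem \ref{gauge1}: the induced algebroid map is $X\oplus\xi\mapsto X\oplus\Delta(f)(X)+\operatorname{Ad}_{f}(s_*\xi)$. To verify this, I take a curve in the $s$-fibre through the unit $(x,x,1)$, written as $t\mapsto(y(t),x,g(t))$ with $y(0)=x$ and $g(0)=1$, and I apply the map to it. This gives $(y(t),x,f(y(t))g(t)f(x)^{-1})$. Differentiating at $t=0$, the factor $f(y(t))f(x)^{-1}$ contributes $df(\dot y)\,f(x)^{-1}$, which is the right-translated derivative, namely $\Delta(f)(X)$ with the right-invariant convention $\theta^R$. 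The remaining term $f(x)\dot g(0)f(x)^{-1}$ gives $\operatorname{Ad}_{f(x)}\xi$.

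Comparing with the transition map $X\oplus\xi\mapsto X\oplus\chi_{ij}(X)+a_{ij}(\xi)$ then gives $\chi_{ij}=\Delta(\sigma_{ij})$ and $a_{ij}=\operatorname{Ad}\circ\,\sigma_{ij}$, which are (1) and (2). The main obstacle is the bookkeeping of conventions in this last step. I need to check that the right-invariant Maurer--Cartan form, rather than the left one, is the one that appears, and that the identification $A(\mathcal{G})=\ker ds\rest{1_M}$ matches the splitting used in Example \ref{extrivial}. I would settle both by the explicit curve computation above, checking it against the convention fixed at the start of the paper.
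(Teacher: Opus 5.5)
Your proposal is correct and is essentially the paper's own proof. It uses the same local groupoid trivializations $\Sigma_i(y,x,g)=\sigma_i(y)\,g\,\sigma_i(x)^{-1}$, and the paper's splitting into $\Theta_i=A(\Sigma_i\circ\theta_i)$ and $\psi_i=A(\Sigma_i\circ\eta_i)$ is exactly your decomposition of $\Psi_i=A(\Sigma_i)$. It also uses the same transition automorphism $(y,x,g)\mapsto(y,x,\sigma_{ij}(y)\,g\,\sigma_{ij}(x)^{-1})$ and differentiates at the units. Your curve computation in the $s$-fibre just makes explicit the step the paper states as ``by differentiation'', and it correctly yields the right-invariant Darboux derivative $\Delta(\sigma_{ij})$ together with $\operatorname{Ad}_{\sigma_{ij}}$.
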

\begin{proof}
\noindent The section atlas induces a family of groupoid trivializations:
\begin{equation*}
\Sigma_i:U_i \times U_i \times G \longrightarrow \mathcal{G}_{U_i}^{U_i}, \quad \Sigma_i(y,x,g)=\sigma_i(y)\cdot g \cdot \sigma_i(x)^{-1}.\end{equation*}
On the local pieces we have obvious split sequences of groupoids:
\begin{equation*}
\xymatrix@1{U_i \times G\,\,\ar@{^{(}->}[r]^-{\eta_i} & U_i \times U_i \times G \ar@{->>}[r]& U_i \times U_i}
\end{equation*}
where:
$\eta_i(x,g)=(x,x,g)$ and
splitting $\theta_i(y,x)=(y,x,1).$
Applying the Lie functor to each of these arrows we get a Lie algebroid atlas 
$(U_i,\psi_i,\Theta_i)_i$
for $A$ where
$$\Theta_i=A(\Sigma_i \circ \theta_i), \quad \psi_i=A(\Sigma_i\circ \eta_i).$$
The system of transition datas $(U_i,a_{ij},\chi_{ij})_i$ associated to $(U_i,\psi_i,\Theta_i)_i$ 
is computed by differentiation of the transition automorphisms 
\[
\begin{array}{rcl}
\Sigma_{ij}=\Sigma_i^{-1} \circ
\bigl(
  \Sigma_j \rest{U_{ij} \times U_{ij} \times G}
\bigr)
& : &
U_{ij} \times U_{ij} \times G
\longrightarrow
U_{ij} \times U_{ij} \times G \\[0.6em]
& &
(y,x,g)
\longmapsto
\bigl(
  y,\;
  x,\;
  \sigma_{ij}(y)\, g \, \sigma_{ij}(x)^{-1}
\bigr).
\end{array}
\]
Therefore, by differentiation, the induced map at the algebroid level is: 
$$A(\Sigma_{ij})(X\oplus\xi)=X\oplus\Delta(\sigma_{ij})(X)+\operatorname{Ad}_{\sigma_{ij}(x)}\xi.$$
\end{proof}

\subsection{The cohomological obstruction}
Let $A\Rightarrow M$ be a transitive algebroid and $p:\widetilde{M}\to M$ be the universal covering of $M$.
Mackenzie proved that $A$ is integrable if and only if the pullback algebroid $p^{!}A \Rightarrow \widetilde{M}$ is 
\cite[Theorem 8.3.4]{zbMATH00822678}.
Therefore the cohomological obstruction to the integrability  is first defined for an algebroid on a simply connected base, then in the general case, one just defines it as the one of the lift to the universal cover \cite[Definition 8.3.5]{zbMATH00822678}. \smallskip 

The Mackenzie obstruction arises from reversing the differentiation process starting from a trivializing family of sections of a transitive groupoid $\mathcal{G}$ and recovering a system of transition datas for $A=A(\mathcal{G})$. 
So begin with a transitive algebroid $A\Rightarrow M$ over a simply connected base and fix a point $m\in M$; let $\mathfrak{g}=L_m$ be the isotropy Lie algebra at $m$.
If $A$ comes from a Lie groupoid $\mathcal{G}$ then it necessarily admits an algebroid atlas with transition datas
with $a_{ij}$ and $\xi_{ij}$
 satisfying 
the conditions $(1)$ and $(2)$ in the Proposition \ref{propcocycle}. \smallskip 

Recall that the adjoint group of $\g$, denoted with $\operatorname{Ad}(\g)$ is the
unique connected Lie subgroup of $\operatorname{Aut}(\g)$ whose Lie algebra is the image of the adjoint representation $\g \to \mathfrak{gl}(\g)$. For every connected Lie group $G$ integrating $\g$ we have $\operatorname{Ad}(\g)=\operatorname{Ad}(G)$, the image of the adjoint representation of $G$. This group does not depends on the 
Lie group integrating $\g$; we can take the simply connected one $\widetilde{G}$. 

Therefore, if the $s$-fibers of $\mathcal{G}$ are connected, the isotropy group $G=\mathcal{G}_m^m$ is connected too, and
$\operatorname{Ad}(G)=\operatorname{Ad}(\g)$.
The first compatibility condition in Proposition  \ref{propcocycle} implies that the cocycle takes values in $\operatorname{Ad}(G)$. \smallskip

The effect is that if we want to try to revert the differentiation process we must begin with special algebroid atlases.
However this does not constitute an obstruction. Mackenzie showed that for any transitive algebroid we may always find an atlas 
 $\{U_i,\psi_i,\Theta_i\}_i$
 with the $a_{ij}$ taking values in $
 \operatorname{Ad}(\widetilde{G})=\operatorname{Ad}(\g)$. \smallskip

More precisely, associated with $L$ is the transitive Lie groupoid $\Phi_{\operatorname{Aut}}(L)$ whose arrows from $x$ to $y$ are the Lie algebra isomorphisms $L_x \to L_y$. Its algebroid is denoted with $\mathcal{D}_{\operatorname{der}}(L)$. A section $u \in \Gamma(\mathcal{D}_{\operatorname{der}}(L))$ is identified with a bracket derivation: a first order differential operator $u: \Gamma(L) \to \Gamma(L)$ such that 
$$u[\xi,\eta]=[u\xi,\eta]+[\xi,u\eta], \quad \xi,\eta \in \Gamma(L).$$
The adjoint representation of $A$ is a morphism of Lie algebroids $\operatorname{ad}: A \to \mathcal{D}_{\operatorname{der}}(L)$ with constant rank. In particular the image $\operatorname{ad}(A) \leq \mathcal{D}_{\operatorname{der}}(L)$ is a transitive subalgebroid\footnote{the definition of subalgebroid can be found in \cite{zbMATH00822678}, here we only need the existence of the transitive groupoid associated with the adjoint representation}. 
There corresponds an $s$-connected transitive Lie subgroupoid\footnote{in the sense of the image of an injective immersion} of $\Phi_{\operatorname{Aut}}(L)$, denoted with $\operatorname{Int}(A)$, the \emph{groupoid of inner automorphisms of} $A$. 

The Lie algebra of the isotropy group $\operatorname{Int}(A)_m^m$ can be identified with the image of the adjoint representation of $\g$ and, again ($M$ is simply connected) $\operatorname{Int}(A)_m^m$ 
is connected. It follows
$$\operatorname{Int}(A)_m^m=\operatorname{Ad}(\widetilde{G}).$$
Summing up: any section atlas of $\operatorname{Int}(A)$ (with fixed base point) gives a trivializing atlas $\psi_i: U_i\times \mathfrak{g} \longrightarrow L\rest{U_i}$ whose associated cocycle $a_{ij}$ is valued in $\operatorname{Ad}(\widetilde{G})$.\smallskip

To continue with the process of trying to integrate the two conditions in Proposition \ref{propcocycle} we have to complete the $\psi_i$'s to an algebroid atlas.

This second step involves the following Theorem by Mackenzie.
 We state it in a slightly different, equivalent way.

\begin{thm}\cite[Theorem 8.3.1]{zbMATH00822678} \label{macextens}
\label{completiontheorem}
With the above notations,
let $A$ be a transitive algebroid (on an arbitrary base $M$) and fix a section $ U \longrightarrow  \operatorname{Int}(A)_m$ on an open contractible set with associated trivialization $\psi:U \times \mathfrak{g} \longrightarrow L\rest{U}$. We can find a flat connection $\Theta: TU \to A$ such that the map
$\Theta + \psi: TU \oplus (U \times \mathfrak{g}) \longrightarrow A\rest{U}$ is an isomorphism of algebroids.
\end{thm}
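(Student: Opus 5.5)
The statement to prove is Theorem \ref{macextens}: given a section $\sigma\colon U\to \operatorname{Int}(A)_m$ over a contractible open $U$, with induced trivialization $\psi$, we must find a flat connection $\Theta$ compatible with $\psi$.

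The plan has three steps. First, use local triviality to get some algebroid isomorphism $\Psi_0=\Theta_0+\psi_0\colon TU\times\g\to A\rest{U}$; this exists after shrinking, and on a contractible $U$ it exists by the flow argument of \cite[Proposition 3.6]{zbMATH07480966}. Second, compare $\psi_0$ with the given $\psi$. Third, correct $\Psi_0$ by a gauge transformation of $TU\times\g$ so that its $L$-part becomes exactly $\psi$. The flat connection we want is then the $TU$-part of the corrected isomorphism.

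For the comparison, set $\Phi:=\psi_0^{-1}\circ\psi\colon U\to\operatorname{Aut}(\g)$. By Theorem \ref{gauge1}, and by the description of gauge transformations as pairs $(\theta,\Phi)$ satisfying \eqref{comp1}--\eqref{comp2}, it suffices to produce $\theta\in\Omega^1(U,\g)$ with
\[
d\Phi+\operatorname{ad}_\theta\circ\Phi=0.
\]
Equation \eqref{comp1} then follows by differentiating, since $\operatorname{ad}$ is injective modulo the center. More precisely, flatness should be arranged directly, as explained below.

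Equivalently, we need $\Delta(\Phi)=-\operatorname{ad}\circ\theta$, up to sign conventions, where $\Delta$ is the right Darboux derivative in $\operatorname{Aut}(\g)$. So the key claim is that $\Phi$ takes values in $\operatorname{Ad}(\widetilde G)\cdot\psi_1$ for a constant $\psi_1\in\operatorname{Aut}(\g)$, i.e.\ that it takes values in a single coset of the identity component of inner automorphisms. I expect this to be the main obstacle. It should follow from two facts:
\begin{itemize}
\item $\psi$ comes from a section of $\operatorname{Int}(A)$;
\item $\psi_0$ comes from an algebroid isomorphism, and hence integrates $\operatorname{ad}(A)$-parallel data.
\end{itemize}
Concretely, the map $\operatorname{ad}\circ\Psi_0$ is an algebroid isomorphism onto $\operatorname{ad}(A)\rest U$. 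It integrates, via Theorem \ref{lie2} applied on the simply connected $U$, to a groupoid morphism $U\times U\times\widetilde G\to\operatorname{Int}(A)$. This shows that $\psi_0(x)\circ\psi_0(m)^{-1}$ lies in $\operatorname{Int}(A)^x_m$. Since $\sigma(x)$ also lies in $\operatorname{Int}(A)^x_m$, we get that $\Phi(x)\Phi(m)^{-1}$, after conjugation, lies in $\operatorname{Int}(A)^m_m=\operatorname{Ad}(\widetilde G)$.

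Given this, $\Phi\cdot\Phi(m)^{-1}\colon U\to\operatorname{Ad}(\widetilde G)$ is smooth into a connected Lie subgroup. Because $U$ is simply connected, it lifts to $f\colon U\to\widetilde G$ with $\operatorname{Ad}_f\circ\Phi(m)=\Phi$. Smoothness into the immersed subgroup uses the initial-submanifold property, which holds because the map arises from a smooth groupoid morphism.

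Now take $\theta:=\Delta(f)$. This is automatically Maurer--Cartan, which handles the flatness issue directly. The pair $(\theta,\operatorname{Ad}_f\circ\Phi(m))$ is then the gauge transformation associated by Theorem \ref{gauge1} to the triple $(1,f,\Phi(m))$. Composing $\Psi_0$ with this gauge transformation gives an algebroid isomorphism whose restriction to $U\times\g$ is $\psi_0\circ\Phi=\psi$. Its $TU$-component $\Theta=\Theta_0+\psi_0\circ\theta$ is the required flat connection, and $\Theta+\psi$ is an isomorphism of algebroids.
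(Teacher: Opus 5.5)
Your argument is correct in substance, but it takes a different route from the one behind this statement. The paper gives no proof and simply quotes Mackenzie, where the standard argument is infinitesimal:
\begin{enumerate}
\item The flat LAB connection $\nabla^\psi$ induced by $\psi$ takes values in $\operatorname{ad}(A)$, because it is the derivative of $(y,x)\mapsto\sigma(y)\sigma(x)^{-1}\in\operatorname{Int}(A)$.
\item Lift it through the surjection $\operatorname{ad}\colon A\to\operatorname{ad}(A)$ to a connection $\gamma$ with $\operatorname{ad}\circ\gamma=\nabla^\psi$.
\item The curvature $\overline{R}_\gamma$ is then $ZL$-valued, and by the Bianchi identity it is a closed $Z\g$-valued $2$-form in the $\psi$-frame.
\item On the contractible $U$, kill it by adding (up to sign) $\psi\circ\lambda$ with $d\lambda=\psi^{-1}\overline{R}_\gamma$. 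The identity $\operatorname{ad}\circ\gamma=\nabla^\psi$ is exactly the compatibility with $\psi$.
\end{enumerate}
You instead start from an arbitrary trivialization $\Psi_0$ and use Lie~II to place $\Phi=\psi_0^{-1}\psi$ in a single coset of $\operatorname{Ad}(\widetilde G)$. You then lift to $f\colon U\to\widetilde G$ and correct $\Psi_0$ by the gauge transformation that Theorem~\ref{gauge1} attaches to $f$ and the constant part of $\Phi$. Your route avoids cohomology and gets flatness for free, since $\Delta(f)$ is Maurer--Cartan; it also reuses the Lie~II device of the Remark after the statement. The price is that it presupposes local triviality over all of $U$. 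The infinitesimal route needs no prior trivialization of $A$. It also isolates the only local obstruction, a closed central $2$-form, which is what globalizes to $e^A$ (compare $d\theta_i=-\omega\rest{U_i}$ in the introduction's example).

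Some repairs are needed.
\begin{itemize}
\item \emph{Base point.} The reference point $m$ need not lie in $U$; in the recipe the $U_i$ cover $M$. Work with a base point $x_0\in U$ and set $k(x)=\psi_0(x_0)\psi_0(x)^{-1}\sigma(x)\sigma(x_0)^{-1}$. It is built by groupoid operations from $\sigma$ and the Lie~II morphism $(y,x)\mapsto\psi_0(y)\psi_0(x)^{-1}$ into $\operatorname{Int}(A)$. Hence it is smooth into the embedded isotropy group $\operatorname{Int}(A)^{x_0}_{x_0}$, which is cleaner than appealing to the initial-submanifold property.
\item \emph{Connectedness.} On an arbitrary base, $\operatorname{Int}(A)^{x_0}_{x_0}$ may be disconnected; the equality with $\operatorname{Ad}(\widetilde G)$ quoted in the paper uses $\pi_1(M)=0$. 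You need connectedness of $U$ and $k(x_0)=1$ to land in the identity component. That component is the connected subgroup of $\operatorname{Aut}(L_{x_0})$ with Lie algebra $\operatorname{ad}(L_{x_0})$, which $\psi_0(x_0)$ conjugates onto $\operatorname{Ad}(\widetilde G)$.
\item \emph{The lift $f$.} $\operatorname{Ad}\colon\widetilde G\to\operatorname{Ad}(\widetilde G)$ is a covering only when $\g$ has trivial center. In general it is a principal $Z\widetilde G$-bundle with non-discrete fibre. So $f$ exists because this bundle pulls back to a trivial bundle over the contractible $U$, not by the lifting criterion for coverings.
\item \emph{Minor points.} $\operatorname{ad}\circ\Psi_0$ is only a surjective morphism with kernel $Z\g$, not an isomorphism. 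Also, \eqref{comp2} gives \eqref{comp1} only modulo the center; as you noticed, the choice $\theta=\Delta(f)$ settles this.
\end{itemize}
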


\begin{rmk}
If $M$ is simply connected, $\g=L_m$ and $\psi: M \times \g \to L$ is a trivialization of Lie algebra bundles that extends to an algebroid trivialization as in Theorem \ref{macextens}, then necessarily $\psi$ comes from a section $U \to \operatorname{Int}(A)_m$. Indeed first define the groupoid morphism $\underline{\Psi}:M \times M \to {\Phi}_{\operatorname{Aut}}(L)$ with $\underline{\Psi}(y,x):=\psi_y \circ \psi_x^{-1},$ then equation \eqref{comp2} implies that the tangent map $A(\underline{\Psi})$ maps $TM$ to $\operatorname{ad}(A)\subseteq \mathcal{D}_{\operatorname{der}}(L)$ and conclude using Lie $\rm{II}$.
\end{rmk}

With this in mind we are ready to define the cohomological obstruction.
\subsubsection*{Recipe for the Mackenzie obstruction class:}
\begin{itemize}
\item fix a point $m\in M$ and let $\g:=L_m$ the isotropy Lie algebra with simply connected associated group $\widetilde{G},$
\item take any section atlas of $\operatorname{Int}(A)_m$ on a good cover 
 $\{U_i\}_i$; this gives trivializations $\psi_i: U_i \times \mathfrak{g}\to L\rest{U_i}$ and use Theorem \ref{completiontheorem} to complete these to an algebroid atlas $(U_i,\psi_i,\Theta_i)_i$.
Now the associated system of transition datas
     $(U_i,a_{ij},\chi_{ij})_i$ satisfies $a_{ij}:U_{ij}\rightarrow \operatorname{Ad}(\widetilde{G})$.
\item 
Integrate the $\chi_{ij}$ to find
functions $s_{ij}:U_{ij}\to \widetilde{G}$
such that $\Delta(s_{ij})=\chi_{ij}$. These are
unique up to right translation for an element of $\widetilde{G}$.
\item Computing the Darboux derivative with respect to the group $\operatorname{Ad}(\widetilde{G})$ we find
$$\Delta(\operatorname{Ad} \circ \,s_{ij})=\operatorname{ad} \circ \,\Delta(s_{ij})= \operatorname{ad}\circ \,\chi_{ij} =\Delta(a_{ij}).$$
The intermediate $\Delta$ is in $\widetilde{G}$ but the first and the last ones are in $\operatorname{Ad}(\widetilde{G})$. It follows that the $s_{ij}$ can be chosen such that 
$$a_{ij} = \operatorname{Ad}\circ \,s_{ij}$$
for every $i,j$.
\item For non empty $U_{ijk}$ define functions $e_{ijk}:U_{ijk}\longrightarrow \widetilde{G}$ by $e_{ijk}:=s_{jk}s_{ik}^{-1}s_{ij}$.
\end{itemize}
\begin{thm}{\em{(Mackenzie)}}
Let $A$ be a transitive algebroid on a simply connected manifold. The functions $e_{ijk}$ defined by the above recipe are valued in $Z\widetilde{G}$ and define a $2$-\u{C}ech cocycle
$(e_{ijk})_{ijk}.$
The corresponding cohomology class 
$$e^A:=[(e_{ijk})_{ijk}]\in \check{H}^2(M,Z\widetilde{G}) $$ does not depends on the choices.
The algebroid $A$ is integrable if and only if
$e$ belongs to the image of $\check{H}^2(M,D)$ for a discrete subgroup $D\subset Z\widetilde{G}$.
\end{thm}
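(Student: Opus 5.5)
We need to prove Mackenzie's theorem: the $e_{ijk}$ are central, they form a cocycle, the class is independent of the choices, and integrability holds iff the class comes from a discrete subgroup.

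\emph{Centrality.} On $U_{ijk}$ we compute $\operatorname{Ad}\circ e_{ijk}=a_{jk}a_{ik}^{-1}a_{ij}$. We write $a_{ik}=a_{ij}a_{jk}$, which holds because the $a_{ij}=\psi_i^{-1}\psi_j$ form a genuine cocycle. This gives $\operatorname{Ad}\circ e_{ijk}=a_{jk}a_{jk}^{-1}a_{ij}^{-1}a_{ij}=\mathrm{Id}$. So $e_{ijk}$ takes values in $\ker\operatorname{Ad}=Z\widetilde{G}$, using that $\widetilde{G}$ is connected.

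\emph{Local constancy.} We compute the Darboux derivative of $s_{ij}s_{jk}$ with the product rule for $\Delta$ (appendix): $\Delta(gh)=\Delta(g)+\operatorname{Ad}_g\Delta(h)$. This gives $\Delta(s_{ij}s_{jk})=\chi_{ij}+a_{ij}(\chi_{jk})=\chi_{ik}=\Delta(s_{ik})$ by Proposition \ref{transition forms}(2). Hence $s_{ij}s_{jk}=s_{ik}c$ for a locally constant $c$, so $s_{ik}^{-1}s_{ij}s_{jk}$ is constant on each connected component of $U_{ijk}$, which is contractible for a good cover. Since $e_{ijk}$ is central, $e_{ijk}=s_{jk}s_{ik}^{-1}s_{ij}$ is conjugate to $s_{ik}^{-1}s_{ij}s_{jk}$ and is therefore equal to it. So $e_{ijk}$ is a locally constant $Z\widetilde{G}$-valued function.

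\emph{Cocycle identity.} On a quadruple intersection $U_{ijkl}$ we check $e_{jkl}e_{ikl}^{-1}e_{ijl}e_{ijk}^{-1}=1$. The approach is to express everything in terms of the $s$'s and use centrality to move factors freely. From the normalized form $s_{ij}s_{jk}=s_{ik}e_{ijk}$ we compute $s_{ij}s_{jk}s_{kl}$ in two ways:
\[
s_{ij}s_{jk}s_{kl}=s_{ik}s_{kl}\,e_{ijk}=s_{il}\,e_{ikl}e_{ijk},
\qquad
s_{ij}s_{jk}s_{kl}=s_{ij}s_{jl}\,e_{jkl}=s_{il}\,e_{ijl}e_{jkl}.
\]
Comparing the two expressions gives the identity in the abelian group $Z\widetilde{G}$.

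\emph{Independence of choices.} There are three kinds of choice.
\begin{itemize}
\item The $s_{ij}$ are determined by $\Delta(s_{ij})=\chi_{ij}$ and $\operatorname{Ad}\circ s_{ij}=a_{ij}$ only up to right multiplication by constants in $Z\widetilde{G}$. This changes $e$ by a \v{C}ech coboundary.
\item The completion $\Theta_i$ in Theorem \ref{completiontheorem} can be changed. With $\psi_i$ fixed, two completions differ by a gauge transformation of $TU_i\times\g$ preserving $\psi_i$. By Theorem \ref{gauge1}, applied on the simply connected $U_i$, such a transformation is given by $f_i\colon U_i\to\widetilde{G}$ with $\operatorname{Ad}_{f_i}=\mathrm{Id}$, i.e. $f_i$ is $Z\widetilde{G}$-valued and constant. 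Then $s_{ij}$ is replaced by $f_i^{-1}s_{ij}f_j$, which changes $e$ by a coboundary.
\item The section atlas of $\operatorname{Int}(A)_m$ can be changed, or the cover refined. Changing sections multiplies $a_{ij}$ by $\operatorname{Ad}(\widetilde{G})$-valued functions $b_i$. We lift each $b_i$ to a map $\widetilde{b}_i\colon U_i\to\widetilde{G}$, possible by contractibility, and replace $s_{ij}$ by $\widetilde{b}_i^{-1}s_{ij}\widetilde{b}_j$, which leaves $e$ unchanged. A common refinement handles the change of cover.
\end{itemize}

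\emph{Integrability criterion.} If $A=A(\mathcal{G})$, we take $\mathcal{G}$ $s$-connected and $s$-simply connected. Its isotropy is then $G=\widetilde{G}/D$ with $D\subset Z\widetilde{G}$ discrete, since $G$ is connected. By Proposition \ref{propcocycle}, lifts $s_{ij}$ of the $\sigma_{ij}$ satisfy the recipe, and $e_{ijk}$ projects to the trivial cocycle of the $\sigma_{ij}$. Hence $e_{ijk}\in D$.

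Conversely, suppose $e=[e']$ with $e'$ a $D$-valued cocycle and $D$ discrete. After adjusting the $s_{ij}$ by a coboundary, the $e_{ijk}$ lie in $D$. Then $\bar s_{ij}=s_{ij}\bmod D$ is a $\widetilde{G}/D$-valued cocycle. We glue a principal $\widetilde{G}/D$-bundle and take its gauge groupoid. Its Lie algebroid has the same transition data $(a_{ij},\chi_{ij})$, since $\Delta(\bar s_{ij})=\chi_{ij}$. Because a system of transition data recovers the algebroid, this algebroid is isomorphic to $A$.

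\emph{Main obstacle.} The delicate steps are the bookkeeping in the cocycle and independence arguments, and above all, in the converse direction, checking carefully that equal transition data give isomorphic algebroids.
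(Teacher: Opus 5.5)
\paragraph{Comparison with the paper.} The paper does not prove this statement itself. It only says that the statement combines Mackenzie's Theorem 8.3.2 with the recipe computations, namely the choice of $s_{ij}$ with $\Delta(s_{ij})=\chi_{ij}$ and $\operatorname{Ad}\circ s_{ij}=a_{ij}$. You instead give a self-contained argument using the paper's own tools: the Darboux product rule, the gauge description of Theorem \ref{gauge1}, and Proposition \ref{propcocycle}. In substance it is correct. The centrality computation checks out, as does the identification $e_{ijk}=s_{ik}^{-1}s_{ij}s_{jk}$ by conjugation and the two-way expansion of $s_{ij}s_{jk}s_{kl}$. Both directions of the integrability criterion also work. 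The citation buys brevity and Mackenzie's general framework. Your route makes the effect of each choice visible, and it matches the gauge-theoretic computations the paper later uses in Theorem \ref{mcmein}.

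\paragraph{A misstated step.} In the second bullet you conclude that $f_i$ is $Z\widetilde{G}$-valued \emph{and constant}. The second part is false whenever $\g$ has nonzero center, because $Z\widetilde{G}$ need not be discrete. In the paper's example $A_\omega$, with $\g=\mathbb{R}$, the completion $\theta_i$ can be changed by an arbitrary exact form $df_i$. The argument still survives once $f_i$ is only central-valued:
\begin{itemize}
\item Since $\operatorname{Ad}(\widetilde{G})$ fixes the center of $\g$, $\chi_{ij}$ becomes $\chi_{ij}+\Delta(f_j)-\Delta(f_i)$.
\item The functions $f_i^{-1}s_{ij}f_j$ satisfy both recipe conditions.
\item By centrality, $e$ is literally unchanged, not merely changed by a coboundary of constants as in your first bullet.
\end{itemize}

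\paragraph{Third bullet.} Here $\widetilde{G}\to\operatorname{Ad}(\widetilde{G})$ is a principal $Z\widetilde{G}$-bundle, not a covering. The lift $\widetilde b_i$ exists because bundles over the contractible $U_i$ are trivial. The replacement $s_{ij}\mapsto\widetilde b_i^{-1}s_{ij}\widetilde b_j$ is valid for the specific completion $\Psi_i\circ T_{\widetilde b_i}$, where $T_f(X\oplus\xi)=X\oplus\Delta(f)(X)+\operatorname{Ad}_f\xi$. Any other completion is then handled by the corrected second bullet.

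\paragraph{Two small justifications.}
\begin{itemize}
\item In the forward direction, $\operatorname{Ad}\circ\sigma_i$ is indeed a section of $\operatorname{Int}(A)_m$. This holds because $\mathcal{G}$ is $s$-connected, so its image under $\operatorname{Ad}$ is $\operatorname{Int}(A)$. That is what makes the atlas of Proposition \ref{propcocycle} admissible in the recipe.
\item In the converse, you represent the class by a $D$-valued cocycle on the same good cover. This uses that $\check{H}^2(\mathcal{U};\cdot)\to\check{H}^2(M;\cdot)$ is an isomorphism for good covers, as in Theorem \ref{cohomoth}.
\end{itemize}
The step you flag as the main obstacle is actually routine. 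That equal transition data give isomorphic algebroids is just gluing the local isomorphisms $\Psi'_i\circ\Psi_i^{-1}$, which agree on overlaps precisely because the transition data coincide.
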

 The class $e$ is the cohomological obstruction by Mackenzie.
\begin{proof}
The statement puts together \cite[Theorem 8.3.2]{zbMATH00822678} and the computations before it.
\end{proof}
When $M$ is non simply connected one simply
considers the universal covering $\pi:\widetilde{M}\to M$ and
defines the class of $A\Rightarrow M$ as the class of $\pi^*A$ in $\check{H}^2(\widetilde{M},Z\widetilde{G}).$

\begin{prop}\label{functorialityMackenzie}
Let $A$ be a transitive Lie algebroid over a simply connected manifold $M$ and let $f\colon B\to M$ be a smooth map with $B$ simply connected too. Then 
$$ e^{f^!A}=f^*(e^A),$$
where $f^*\colon \check{H}^2(M,Z\widetilde{G}) \to \check{H}^2(B,Z\widetilde{G}) $ is the homomorphism induced by $f$.
\end{prop}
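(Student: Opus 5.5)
The idea is to pull back the whole recipe for $e^A$ along $f$ and check that each step produces the data of the recipe for $f^{!}A$. Fix $m\in M$ with $\g=L_m$, and choose a base point $b\in B$ with $f(b)=m$. We may arrange this, since the class does not depend on the base point up to the canonical identification of isotropy algebras along paths, and $B$ is connected. Then $(f^{!}A)$ has adjoint bundle $f^*L$, with fibre at $b$ equal to $L_m=\g$, so both classes take values in the same group $Z\widetilde{G}$.

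Start from a good cover $\{U_i\}$ of $M$, a section atlas of $\operatorname{Int}(A)_m$, and an algebroid atlas $(U_i,\psi_i,\Theta_i)$ obtained from Theorem \ref{completiontheorem}. Its transition data $(a_{ij},\chi_{ij})$ satisfy $a_{ij}=\operatorname{Ad}\circ s_{ij}$ and $\Delta(s_{ij})=\chi_{ij}$. Let $\{V_\alpha\}$ be a good cover of $B$ refining $\{f^{-1}(U_i)\}$ via a map $\alpha\mapsto i(\alpha)$. For each $\alpha$, pull back the isomorphism $\Psi_{i(\alpha)}\colon TU_i\times\g\to A|_{U_i}$ to get
\[
f^{!}\Psi_{i(\alpha)}\colon f^{!}(TU_i\times\g)\cong TV_\alpha\times\g\longrightarrow f^{!}A|_{V_\alpha},
\]
explicitly $(Y,\xi)\mapsto (Y,\Psi_i(df(Y),\xi))$. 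This is an algebroid isomorphism because pullback is functorial and $f^{!}(TU\times\g)=TV\times\g$. Its Lie algebra bundle part is $f^*\psi_i$ and its connection part is $f^*\Theta_i$.

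The next step is to verify that $f^*\psi_i$ comes from a section of $\operatorname{Int}(f^{!}A)_b$. Here I would use the Remark after Theorem \ref{macextens}: any trivialization that extends to an algebroid trivialization over a simply connected domain comes from such a section. The domain $V_\alpha$ is contractible, so the argument of the Remark applies locally. Alternatively, one can note that $f$ induces a groupoid map $f^*\operatorname{Int}(A)\to\operatorname{Int}(f^{!}A)$. Either way, the trivializations are admissible for the recipe.

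Now compute the transition data of the pulled-back atlas directly. They are $a_{\alpha\beta}=a_{ij}\circ f$ and $\chi_{\alpha\beta}=f^*\chi_{ij}$. Taking $s_{\alpha\beta}=s_{ij}\circ f$, naturality of the Darboux derivative gives $\Delta(s_{ij}\circ f)=f^*\Delta(s_{ij})=f^*\chi_{ij}$, and the relation $a_{\alpha\beta}=\operatorname{Ad}\circ s_{\alpha\beta}$ holds as well. Hence $e_{\alpha\beta\gamma}=e_{ijk}\circ f$, which is precisely the standard cochain-level representative of $f^*$ on Čech cohomology computed through the refinement. Independence of choices, from the Mackenzie theorem, then gives $e^{f^!A}=f^*e^A$.

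The main obstacle is the admissibility step: that the pulled-back trivializations are of the required $\operatorname{Int}$-type and that the identifications of $\g$ at $b$ and at $m$ are compatible. Everything else is formal naturality.
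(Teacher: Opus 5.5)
Your proposal is correct and follows essentially the same route as the paper. You pull back the algebroid atlas along a good cover of $B$ refining $\{f^{-1}(U_i)\}$, observe that the data $a_{ij}$, $\chi_{ij}$ and the integrals $s_{ij}$ all pull back, and conclude that the resulting cocycle $e_{i(\alpha)i(\beta)i(\gamma)}\circ f$ is the cochain-level representative of $f^*e^A$.

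Your extra care about choosing $b$ with $f(b)=m$, and about the pulled-back $\psi_i$ coming from sections of $\operatorname{Int}(f^!A)_b$, addresses points the paper leaves implicit. You give two justifications for the $\operatorname{Int}$-type property. The groupoid map from the pullback of $\operatorname{Int}(A)$ into $\operatorname{Int}(f^!A)$ is the one that works. The local Remark by itself only gives $\psi_y\circ\psi_x^{-1}\in\operatorname{Int}(f^!A)$ for $x,y\in V_\alpha$; it does not show that each $\psi_x$ is an arrow starting at $b$.
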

\begin{proof} Let $(U_i,\psi_i,\Theta_i)_i$ be an algebroid atlas for $A$. Fix a good cover $\{V_i\}_i$ of $B$ such that $f(V_i)\subseteq U_{\lambda(i)}$, for a suitable refining map $\lambda$. The pullback of the trivializations $\psi_{\lambda(i)}$ and the flat connections $\Theta_{\lambda(i)}$ define an algebroid atlas $(V_i,\psi_i',\Theta_i')$ for $f^!A$. A direct computation shows that the associated system of transition data $(V_i,a_{ij}',\chi_{ij}')$ satisfies $\chi_{ij}'=\chi_{\lambda(i)\lambda(j)}\circ f$ and $a_{ij}'=a_{\lambda(i)\lambda(j)}\circ f$.
Accordingly, the corresponding solutions of $\Delta(s_{ij}')=\chi_{ij}'$ and $a_{ij}'=\operatorname{Ad}\circ \,s_{ij}'$ are obtained pulling back the solutions $s_{\lambda(i)\lambda(j)}$ associated with $A$, namely
$s_{ij}'=s_{\lambda(i)\lambda(j)}\circ f.$
It follows that the associated  \u{C}ech cocycles satisfy $e_{ijk}'=e_{\lambda(i)\lambda(j)\lambda(k)}\circ f$, and hence $e^{f^!A}=f^*(e^A).$
\end{proof}
\section{The Crainic--Fernandes monodromy map}
In this section, we briefly recall the construction of the monodromy map introduced by Crainic and Fernandes \cite{zbMATH02057953}. Their solution of the integrability problem for Lie algebroids is based on the extension of the path-space approach for Lie algebras to the algebroid setting. The fundamental objects leading to the integrability obstructions are $A-paths$ and $A-homotopies$.
\smallskip 

Let $\mathcal{G}$ be a Lie groupoid. A $\mathcal{G}-path$ is a path $g\colon I\to \mathcal{G}$ contained in a single $s$-fiber of $\mathcal{G}$ and starting at the identity of that fiber. If $A$ is the Lie algebroid of $\mathcal{G}$, the $s$-simply connected Lie groupoid integrating $A$, i.e. the monodromy groupoid of $\mathcal{G}$ is recovered as
\begin{equation}\label{MonG} \widetilde{\mathcal{G}}=P(\mathcal{G})/ \sim,\end{equation}
where $P(\mathcal{G})$ is the space of $\mathcal{G}$-paths endowed with the $C^2$-topology and $\sim$ is the equivalence relation given by $C^1$-homotopy with fixed endpoints. Groupoid multiplication is induced by the concatenation of paths. One of the most important points in the construction is the fact that $\widetilde{\mathcal{G}}$ can be described entirely in terms of infinitesimal datas via $A$-paths and $A$-homotopies.

\subsubsection*{A-homotopies}

Let $A\rightarrow M$ be a Lie algebroid. A $C^1$-path $a\colon I\to A$ is an $A-path$ if
\begin{equation*}
\#a(t)=\dot{\gamma}(t),
\end{equation*}
where $\gamma=\pi\circ a$ is the projection of $a$ on $M$, or, equivalently, if the bundle map
$a\, dt\colon TI\to A$
is a Lie algebroid morphism over $\gamma$.

If we start with a groupoid $\mathcal{G}$
integrating $A$,
and $g:I \to \mathcal{G}$ is a $\mathcal{G}$-path, its right differentiation:
$$D^R(g)(t)=\frac{d}{ds}\Big|_{s=0}g(s)\cdot g(t)^{-1},$$
is an $A$-path. Conversely, any $A$-path $a$ integrates to a unique $\mathcal{G}$-path $g$ such that $D^R(g)=a$. Indeed, when endowed with an appropriate Banach manifold structure
the map $D^R$ is an homeomorphism. Moreover $D^R$ turns fixed endpoints homotopy into a relation that can be described infinitesimally without knowing the existence of $\mathcal{G}$, this is exactly the notion of $A$-homotopy.
\smallskip 

More precisely, an $A-homotopy$ between two $A-paths$ $a_0,a_1\colon I\to A$ is a Lie algebroid morphism
\begin{equation*}
H \colon TI^2\to A
\end{equation*}
satisfying the boundary conditions:
\begin{equation*}
\left.H\right|_{T\left( \{0\}\times I \right)}=a_0, \quad \left.H\right|_{T \left( \{1\}\times I \right)}=a_1, \quad \left.H\right|_{T \left( I\times \{0\} \right)}=\left.H\right|_{T \left( I\times \{1\} \right)}=0.
\end{equation*}
Notice that $H$ covers a standard homotopy with fixed enpoints between the projections of $a_0$ and $a_1$ on $M$.
\begin{rmk}
Any Lie algebroid morphism $H\colon TI^2\to A$ can be written in the form
\begin{equation}\label{morphism}
H=a\,dt + b\, d\varepsilon
\end{equation}
where $a,b\colon I^2\to A$ are the $C^1$ functions:
\begin{equation*}
a(\varepsilon,t)=H\left({\partial_t}\right)(\varepsilon,t), \quad b(\varepsilon,t)=H\left(\partial_{ \varepsilon}\right)(\varepsilon,t).
\end{equation*}
In this presentation, the boundary conditions define the $A$-paths:
\begin{equation}\label{boundary conditions}
a(0,t)=a_0(t), \quad a(1,t)=a_1(t), \quad b(\varepsilon,0)=b(\varepsilon,1)=0.
\end{equation}
\end{rmk}
Thus, an $A-homotopy$ may be viewed as a pair of $C^1$ functions $a,b\colon I^2\to A$ satisfying the boundary conditions \eqref{boundary conditions} and combining, via the  
\eqref{morphism}
to a morphism of Lie algebroids. It is shown in \cite{zbMATH07480966} that $H=a\,dt + b\, d\varepsilon$ is a Lie algebroids morphism if and only if the pair $(a,b)$ solves the differential equation:
\begin{equation}\label{homotopyequation}
\partial_t a-\partial_\varepsilon b=\operatorname{T}_\nabla(a,b),
\end{equation}
where 
$T_{\nabla}$
is the torsion of an auxiliary $A$ connection on $A$, induced by a $TM$-connection on $A$ (a connection in the standard sense). The solutions of this equation do not depend on the choice of this connection.
\smallskip 

Let now $P(A)$ be the space of $A$-paths endowed with the $C^1$-topology. The quotient space
$$\mathcal{G}(A)=P(A)/\sim,$$
under the $A$-homotopy equivalence relation $\sim$, has a natural topological groupoid structure. It is called the \textit{Weinstein Groupoid}. If $\mathcal{G}$ integrates $A$, then this reconstructs the monodromy groupoid \eqref{MonG}.
Thus, for a Lie algebroid $A$, the Weinstein groupoid provides a natural candidate for a Lie groupoid integrating $A$. However, this groupoid carries only a topological groupoid structure. The obstruction to the existence of a smooth structure making $\mathcal{G}(A)$ into a Lie groupoid is encoded in the \textit{monodromy groups} associated with $A$.

\begin{rmk}
If $\g$ is a Lie algebra, seen as an algebroid over a point, $\mathcal{G}(\g)$ is the simply connected Lie group $\widetilde{G}$ with Lie algebra $\g$ provided by Lie's third theorem \cite{BAILEY_2001}. 
\end{rmk}

\subsubsection*{The monodromy map} Throughout this section we assume that $A$ is a transitive Lie algebroid over $M$. The construction for the general case follows  by restriction to the orbits. Let $m\in M$, $\mathfrak{g}=L_m$ be the isotropy Lie algebra at the point $m$ and let again $\widetilde{G}$ be the associated simply connected Lie group. The \textit{monodromy map} at the point $m$ is a group homomorphism 
\begin{equation*}
\partial_m\colon \pi_2(M,m)\to Z\tilde{G}
\end{equation*}
defined in the following way.
Let $[\varphi]\in \pi_2(M,m)$ be represented by a smooth map $\varphi\colon I^2 \to M$ collapsing the boundary to $m$. We can construct a Lie algebroid morphism $H=a \, dt+ b\, d\varepsilon\colon TI^2\to TM$ over $u $ with:
\begin{equation}\label{homotopyconditions}
a(0,t)=b(\varepsilon,0)=b(\varepsilon,1)=0.
\end{equation}
To find such a morphism, first choose any connection $\Theta\colon TM \to A$ and put $b(\varepsilon,t)=\Theta\left({\partial_{\varepsilon}\varphi}(\varepsilon,t)\right)$; then, take as $a\colon I^2\to A$, the solution of \eqref{homotopyequation} with initial condition $a(0,t)=0$.
By construction, the $A$-path $a_1(t)=a(1,t)$ stays in the isotropy Lie algebra $\mathfrak{g}$. Indeed, $\#(a_1(t))={\partial_t \varphi}(1,t)=0$. Therefore, $a_1$ can be integrated to a path $g(t)$ in $\widetilde G$ starting at the identity (a $\widetilde{G}$-path).
Finally define $$\partial_m([\varphi]):=g(1),$$ the endpoint of the integrated path. 
It can be proved that the definition does not depend on all the involved choices \cite{zbMATH02057953}. The image of the monodromy map is a  subgroup $\widetilde{\mathcal{N}}_m(A)$
of $\widetilde{G}$ called the \textit{monodromy group} of $A$ at the point $m$. 
It's proven that the monodromy groups are contained in the center $Z\widetilde{G}$ of $\widetilde{G}$.

\begin{thm}\cite{zbMATH02057953} Let $A$ be a transitive Lie algebroid over $M$. The following statements are equivalent:
\begin{itemize}
\item [(i)] $A$ is integrable.
\item [(ii)] $\mathcal{G}(A)$ is smooth.
\item [(iii)] The monodromy groups are discrete.
\end{itemize}
In this case, $\mathcal{G}(A)$ is the unique $s$-simply connected Lie groupoid integrating $A$.
\end{thm}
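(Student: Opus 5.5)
The plan is to pass everything through the $s$-fibre $\mathcal{G}(A)_m$ and its isotropy group. For a transitive $A$ this reduces the three conditions to one statement about the topological group $\mathcal{G}(A)_m^m$. Concretely, I will show
\[
\mathcal{G}(A)_m^m\cong \widetilde{G}/\widetilde{\mathcal{N}}_m(A)
\]
as topological groups, and that $\mathcal{G}(A)_m\to M$ is locally trivial with this fibre. The equivalences then follow by standard arguments.

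\emph{(i)$\Rightarrow$(ii).} If $\mathcal{G}$ integrates $A$, replace it by its monodromy groupoid $\widetilde{\mathcal{G}}$ of \eqref{MonG}; by Theorem \ref{lie2} this is the $s$-simply connected integration. Right differentiation $D^R$ is a homeomorphism $P(\mathcal{G})\to P(A)$. Under it, $C^1$-homotopies of $\mathcal{G}$-paths with fixed endpoints correspond exactly to $A$-homotopies: an $A$-homotopy $H$ integrates, via Lie II applied to $TI^2\to A$, to a map $I^2\to\mathcal{G}$. Hence $\mathcal{G}(A)\cong\widetilde{\mathcal{G}}$, and $\mathcal{G}(A)$ inherits a smooth structure.

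\emph{(ii)$\Rightarrow$(i).} If $\mathcal{G}(A)$ is smooth, one checks that its Lie algebroid is $A$ by differentiating classes of short $A$-paths $t\mapsto t\xi$.

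\emph{(i)$\Rightarrow$(iii).} Use the homotopy exact sequence of the principal bundle $r\colon\widetilde{\mathcal{G}}_m\to M$:
\[
\pi_2(M,m)\longrightarrow \pi_1(\widetilde{\mathcal{G}}_m^m)\longrightarrow \pi_1(\widetilde{\mathcal{G}}_m)=0 .
\]
Unwinding the definition of $\partial_m$ through $D^R$ shows two things:
\begin{itemize}
\item the $A$-path $a_1$ integrates in $\widetilde{\mathcal{G}}_m^m$ to a loop, namely the boundary of the lifted square;
\item $\partial_m([\varphi])$ is the lift of the class of that loop to $\widetilde{G}$.
\end{itemize}
So $\widetilde{\mathcal{N}}_m(A)$ is the kernel of the covering homomorphism $\widetilde{G}\to(\widetilde{\mathcal{G}}_m^m)^0$, and a kernel of a covering map is discrete.

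\emph{(iii)$\Rightarrow$(ii).} This is the main step. Cover $M$ by contractible $U_i$ and fix algebroid trivializations $\Psi_i\colon TU_i\times\g\to A\rest{U_i}$, using local triviality \cite[Proposition 3.6]{zbMATH07480966}. Fix $A$-paths $\gamma_i$ from $m$ to base points $x_i\in U_i$. Over the trivial piece, $A$-paths are pairs (path in $U_i$, $\g$-path), and $A$-homotopy classes with fixed endpoints are parametrized by $U_i\times U_i\times\widetilde{G}$. Concatenating with $\gamma_i$ gives a surjection $U_i\times\widetilde{G}\to r^{-1}(U_i)\subset\mathcal{G}(A)_m$.

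The key claim is that two elements of $U_i\times\widetilde{G}$ have the same image if and only if they differ by right multiplication by $\widetilde{\mathcal{N}}_m(A)$. To prove it, I take two $A$-paths from $m$ to the same point that are $A$-homotopic in $A$. Their difference is an $A$-loop whose base loop bounds a disk in $U_i$. The obstruction to contracting this loop inside the trivial piece is then computed by the same procedure as $\partial_m$: solve \eqref{homotopyequation} with $b=\Theta(\partial_\varepsilon\varphi)$ over a sphere obtained by gluing the disk in $U_i$ to the given homotopy. So the discrepancy is exactly $\partial_m$ of a sphere, and the set of all discrepancies is $\widetilde{\mathcal{N}}_m(A)$. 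I expect this identification, in particular the independence of choices and centrality, to be the delicate point.

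Granting the claim, discreteness gives charts $U_i\times\widetilde{G}/\widetilde{\mathcal{N}}_m$. The transition maps between charts are induced by the gauge transformations $\Psi_i^{-1}\Psi_j$. By Theorem \ref{gauge1} these act through smooth maps into $\widetilde{G}$ and automorphisms, and the action descends to $\widetilde{G}/\widetilde{\mathcal{N}}_m$ because $\widetilde{\mathcal{N}}_m\subset Z\widetilde{G}$. This makes $\mathcal{G}(A)_m$ a smooth principal bundle. The whole groupoid $\mathcal{G}(A)$ is then its gauge groupoid, which is smooth; this gives (ii). Uniqueness of the $s$-simply connected integration follows from Theorem \ref{lie2}.
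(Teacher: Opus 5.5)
The paper does not prove this theorem; it quotes it from Crainic--Fernandes, whose argument works on the Banach manifold of $A$-paths with the foliation by $A$-homotopies. Your (i)$\Rightarrow$(iii), via the homotopy sequence of $r\colon\widetilde{\mathcal G}_m\to M$, is sound. Gluing local integrations $U_i\times U_i\times\widetilde G$ along a trivializing atlas is also a legitimate, more Mackenzie/Meinrenken-style route to (iii)$\Rightarrow$(ii).

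\textbf{The main gap.} The structural claim your construction rests on is false unless $M$ is simply connected. An $A$-homotopy covers a homotopy of base paths with fixed endpoints, so the homotopy class of the base path is an invariant of an element of $\mathcal G(A)_m$. Concatenating with a \emph{single} $\gamma_i$ therefore only reaches the elements of $r^{-1}(U_i)$ whose base path lies in one homotopy class. So $U_i\times\widetilde G\to r^{-1}(U_i)$ is not surjective when $\pi_1(M,m)\neq 1$. Correspondingly, the isotropy group is an extension
\[
1\longrightarrow \widetilde G/\widetilde{\mathcal N}_m(A)\longrightarrow \mathcal G(A)_m^m\longrightarrow \pi_1(M,m)\longrightarrow 1,
\]
not $\widetilde G/\widetilde{\mathcal N}_m(A)$ itself. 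For $A=TS^1$, for instance, $\widetilde G=\{1\}$, while $\mathcal G(A)$ is the fundamental groupoid with isotropy $\mathbb Z$. Your own (i)$\Rightarrow$(iii) step already shows this, since only $(\widetilde{\mathcal G}_m^m)^0$ is a quotient of $\widetilde G$. As written, your charts do not form an atlas.

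There are two ways to repair it. One is to prove local triviality of the base-path map $\mathcal G(A)_m\to\widetilde M$: choose one $\gamma_i$ for each lift of $U_i$, and use that an $A$-path can be $A$-homotoped over any homotopy of its base path. The more economical one is to pass to $p^!A$ on the universal cover. Since $p_*$ is an isomorphism on $\pi_2$, \eqref{pullbackCrainic} shows $p^!A$ has the same monodromy group. Your construction then integrates $p^!A$, and Mackenzie's result quoted in Section 3 ($A$ is integrable iff $p^!A$ is) gives (i).

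\textbf{A second, smaller gap.} Your charts put a smooth structure on the \emph{set} $\mathcal G(A)$. Statement (ii), however, concerns the quotient topology of $P(A)/\sim$, and you never check that the charts are homeomorphisms for it, or that $P(A)\to\mathcal G(A)$ is a submersion. The clean fix is to read your construction as (iii)$\Rightarrow$(i): the gauge groupoid of the bundle you build integrates $A$, since over $U_i$ it is the image of $\Psi_i$. Then (ii) follows from your (i)$\Rightarrow$(ii), and this also fits the reduction to $p^!A$ above.

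\textbf{On the point you flagged.} The kernel in the $i$-th chart is the preimage of $\widetilde{\mathcal N}_{x_i}(A)$ under the lift of the framing $\psi_i$ at $x_i$. It need not literally equal $\widetilde{\mathcal N}_m(A)$. It is, however, the kernel of the homomorphism $h\mapsto[\gamma_i]^{-1}\Psi_{i*}(x_i,x_i,h)[\gamma_i]$, hence normal. The automorphism relating the right $\widetilde G$-actions of two charts automatically carries one kernel onto the other, so the transitions descend with no extra work.

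\textbf{A slip in (ii)$\Rightarrow$(i).} The path $t\mapsto t\xi$ is an $A$-path only when $\sharp\xi=0$. For transverse directions you need $A$-paths running along integral curves of $\sharp\xi$, for a section $\xi$.
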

\begin{rmk}
Crainic and Fernandes identified obstructions to the integrability of arbitrary Lie algebroids. In the general case, the integrability criterion includes an additional condition which controls the discreteness of the monodromy groups in the direction transverse to the leaves of the singular foliation given by the orbits of the algebroid.
Indeed in the transitive case one proves that it is sufficient to have $\widetilde{N}_x(A)$ discrete for just one point (if one then all) see the Remark 4.12 in \cite{zbMATH02057953}. 
The relation between the various monodromy groups at different points in the same orbit is described in \cite{zbMATH07480966}.
\end{rmk}
We conclude the section by estabilishing the naturality of the monodromy with respect to pullbacks of Lie algebroids. The result follows from the fact that pullbacks preserve the isotropy Lie algebras, and that $A$-homotopies for the pullback induce $A$-homotopies for the original algebroid.

From now on, when needed, we will denote the monodromy map associated with the algebroid $A$ by $\partial_m^A$.
\begin{prop}Let $A$ be a transitive Lie algebroid over $M$, and  let $f\colon B\to M$ be a smooth map. Then
\begin{equation}\label{pullbackCrainic}
    \partial_m^{f^!A}=\partial_{f(m)}^A\circ f_*,
\end{equation}
where $f_*\colon \pi_2(B,m)\to\pi_2(M,f(m))$ is the induced morphism.
\end{prop}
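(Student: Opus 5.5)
The plan is to compare the two constructions directly, one representative at a time. Fix $[\varphi]\in\pi_2(B,m)$, represented by a smooth map $\varphi\colon I^2\to B$ that collapses $\partial I^2$ to $m$. Then $f\circ\varphi$ represents $f_*[\varphi]\in\pi_2(M,f(m))$.

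First I identify the isotropy data. The adjoint bundle of $f^!A$ is $f^*L$, so its isotropy Lie algebra at $m$ is $(f^*L)_m=L_{f(m)}$. Both monodromy maps therefore take values in the centre of the same simply connected group $\widetilde G$, and the statement makes sense.

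Next I produce the data defining $\partial^{f^!A}_m([\varphi])$ from data for $A$. Choose a connection $\Theta\colon TM\to A$. For $X\in T_bB$, the pair $(X,\Theta(df X))$ lies in $f^!A$, since $\sharp\Theta(dfX)=dfX$. So
\[
\Theta'(X):=(X,\Theta(df X))
\]
is a connection on $f^!A$. Put $b'=\Theta'(\partial_\varepsilon\varphi)$ and let $a'$ be the solution of \eqref{homotopyequation} for $f^!A$ with $a'(0,t)=0$. Then $H'=a'\,dt+b'\,d\varepsilon\colon TI^2\to f^!A$ is a Lie algebroid morphism over $\varphi$ satisfying \eqref{homotopyconditions}.

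Compose $H'$ with the canonical morphism $\mathrm{pr}_2\colon f^!A\to A$, which covers $f$. This map is a Lie algebroid morphism by the very definition of the bracket on $f^!A$: the bracket is compatible with the second projection and the anchor, and the morphism condition over $f$ is exactly the fibred-product condition. Hence $H:=\mathrm{pr}_2\circ H'=a\,dt+b\,d\varepsilon$ is a Lie algebroid morphism $TI^2\to A$ over $f\circ\varphi$. Here
\[
b=\Theta(\partial_\varepsilon(f\circ\varphi)),\qquad a=\mathrm{pr}_2\circ a',
\]
and $a(0,t)=b(\varepsilon,0)=b(\varepsilon,1)=0$. Since the solution of \eqref{homotopyequation} is independent of the auxiliary connection and is unique given $b$ and the initial condition, $a$ is exactly the $a$-component used to compute $\partial_{f(m)}^A([f\circ\varphi])$.

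It remains to compare the endpoints. At $\varepsilon=1$ the path $a'_1(t)$ lies in $(f^*L)_m$. Under $\mathrm{pr}_2$ it maps identically to $a_1(t)\in L_{f(m)}$, because on isotropy $\mathrm{pr}_2$ is the identity $(f^*L)_m=L_{f(m)}$. The two $\widetilde G$-paths obtained by integrating these Lie algebra paths therefore coincide, and so do their endpoints. This gives \eqref{pullbackCrainic}, using the independence of choices proved in \cite{zbMATH02057953}.

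The only delicate point is the claim that a morphism $TI^2\to f^!A$ composed with $\mathrm{pr}_2$ is a morphism into $A$ over $f\circ\varphi$. I would settle this without torsion formulas, using the definition of morphisms over different bases. The induced map $TI^2\to (f\circ\varphi)^!A$ factors as $TI^2\to\varphi^!(f^!A)\cong(f\circ\varphi)^!A$, using the canonical isomorphism between iterated pullbacks, which is checked on the local generators $(X,h\,\varphi^*f^*\xi)$ with the bracket formula.
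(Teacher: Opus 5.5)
Your proof is correct and follows the route the paper itself indicates: the isotropy of $f^!A$ at $m$ is $L_{f(m)}$, and composing an $f^!A$-homotopy over $\varphi$ with the projection $f^!A\to A$ gives an $A$-homotopy over $f\circ\varphi$ with the same isotropy endpoint path. You also supply the details that the paper's one-line justification leaves implicit, namely the pulled-back connection $\Theta'$ and the fact that the composite is a morphism, via $\varphi^!(f^!A)\cong(f\circ\varphi)^!A$.
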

\section{The Meinrenken's construction of the monodromy map}
In this section we describe an alternative way of constructing the monodromy map given by Meinrenken in \cite{zbMATH07480966}. Actually we present it in a slightly different way by following his lecture notes \cite{Meinnotes}. 
\subsubsection{Classification of transitive algebroids on the $2$--sphere.}
Using Theorem \ref{gauge1}, Meinrenken classified the transitive algebroids on the $2$-sphere with given framing. This is an extension of the clutching construction for principal bundles with given structure group $G$.

Fix a reference point on the standard sphere ${S}^2=\{x\in \mathbb{R}^3: \|x\|^2=1\}$; we shall always use the east pole $E=(0,1,0)$ for definiteness. 
 Let also
 $P \to {S}^2$ be a $G$-principal bundle. A framing for $P$ at $E$ is an isomorphism (of principal bundles) $i_{E}^*P \to G$. Of course a framing of principal bundles induces a framing for the corresponding transitive algebroid.
Let
$\operatorname{Prin}_G(S^2,E)$ be the set of isomorphism classes of $G$-principal bundles on the sphere $S^2$ with fixed framing at $E$. Pairs $(P,\varphi)$ are identified by isomorphisms which preserve the framings. We can make $\operatorname{Prin}_G(S^2,E)$ into an abelian group with operation induced by the connected sum ${S}^2 \, \sharp \,{S}^2 \cong {S}^2$. The usual clutching construction for principal bundles gives an isomorphism $\operatorname{Prin}_G(S^2,E)\cong \pi_1(G,1)$. 
\smallskip 

In the same way, if $\g$ is the Lie algebra of $G$,
we may consider the set
$\operatorname{Tran}_{\mathfrak{g}}(S^2,E)$
of isomorphism classes of 
pairs $(A,\varphi)$ where $A \Rightarrow S^2$ is a transitive algebroid and $\varphi:L_E \to \g$ is a fixed framing at $E$. Two pairs are identified if the corresponding algebroids are isomorphic and the isomorphism preserves the framings. In particular all the elements in $\operatorname{Tran}_{\mathfrak{g}}(S^2,E)$ represent transitive algebroids with the same isotropy type $\g$.
Also in this case there is a connected sum construction making it an abelian group.
The details can be found in the cited paper (and lecture notes). We won't use directly the group structure but notice that there is a natural group morphism $\operatorname{Prin}_G(S^2,E) \to \operatorname{Tran}_{\mathfrak{g}}(S^2,E)$
which maps a principal bundle to its associated algebroid.
\smallskip 

Let $\widetilde{G}$ be the simply connected Lie group integrating $\mathfrak{g}$; then $\widetilde{G}$ is the universal cover of the connected component of the identity $G^{\circ}$; the kernel of the covering map $\widetilde{G}\to G^{\circ}$ is identified with $\pi_1(G,1)=\pi_1(G^{\circ},1)$ and included in the center $Z\widetilde{G}$. 

Before stating the classification theorem
we fix an orientation of $S^1=S^2 \cap \{z=0\}$ in the counter-clockwise way and take the corresponding generator $1_{S}\in \pi_1(S^1,E)\cong \mathbb{Z}$.

 \begin{thm}{\emph{(}}Meinrenken \cite{zbMATH07480966,Meinnotes1}{\emph{)}} \label{thmMeinrenken}
 There is an isomorphism $\mathsf{c}:\operatorname{Tran}_{\mathfrak{g}}(S^2,E)\to Z\widetilde{G}$ 
 extending the clutching construction entering in a commutative diagram:
 \[\begin{tikzcd}
	{\operatorname{Prin}_G(S^2,E)} & {\operatorname{Tran}_{\mathfrak{g}}(S^2,E)} \\
	{\pi_1(G,E)} & {Z\widetilde{G}}
	\arrow[from=1-1, to=1-2]
	\arrow["\cong"', from=1-1, to=2-1]
	\arrow["\mathsf{c}", from=1-2, to=2-2]
	\arrow[from=2-1, to=2-2]
\end{tikzcd}\]
\end{thm}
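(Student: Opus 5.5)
The plan is to build the map $\mathsf{c}$ by a clutching construction modelled on Theorem \ref{gauge1}. Cover $S^2$ by two open contractible sets $U_N\supset$ upper hemisphere and $U_S\supset$ lower hemisphere. Their intersection $U_{NS}$ is an annulus containing $S^1$ and the point $E$. Given $(A,\varphi)$, extend the framing $\varphi$ to algebroid trivializations $\Psi_N\colon TU_N\times\g\to A\rest{U_N}$ and $\Psi_S\colon TU_S\times\g\to A\rest{U_S}$ restricting to $\varphi^{-1}$ at $E$; this is possible by \cite[Proposition 3.6]{zbMATH07480966}. The transition $\Psi_N^{-1}\circ\Psi_S$ is a frame-preserving element of $\operatorname{Gauge}(TU_{NS}\times\g)$. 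By Theorem \ref{gauge1}, applied to $U_{NS}\simeq S^1$ with $\pi_1(U_{NS},E)\cong\mathbb{Z}$ generated by $1_S$, this gauge transformation corresponds to a triple $(\rho,f,\mathrm{Id})$ with $\rho\colon\mathbb{Z}\to Z\widetilde{G}$. Define $\mathsf{c}(A,\varphi):=\rho(1_S)\in Z\widetilde{G}$. Concretely, $\rho(1_S)$ is the endpoint in $\widetilde{G}$ of the solution of $\Delta(g)=\chi_{NS}$ along the loop $S^1$ starting at $1$.

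Well-definedness comes first. The extensions $\Psi_N,\Psi_S$ are unique up to smooth isotopy, and changing them composes the transition on both sides with frame-preserving gauge transformations of the contractible sets $U_N,U_S$. By Theorem \ref{gauge1}, these have trivial $\rho$ and single-valued $f$ on $U_N$ or $U_S$. Quasi-periodic functions multiply, so the new $f$ is $f_N^{-1} f\, f_S$ lifted to $\widetilde{U}_{NS}$, and its monodromy is still $\rho(1_S)$. Isomorphisms of framed algebroids transport trivializations, so $\mathsf{c}$ descends to $\operatorname{Tran}_{\g}(S^2,E)$.

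Injectivity: if two framed algebroids have the same $\rho$, their transition gauge transformations $(\rho,f,\mathrm{Id})$ and $(\rho,f',\mathrm{Id})$ differ by $f'f^{-1}$, which is a genuine map $U_{NS}\to\widetilde{G}$. Choose a deformation retraction of the annulus onto $S^1$. Since $\widetilde{G}$ is simply connected, $\pi_1(\widetilde G)=0$, so the loop $f'f^{-1}\rest{S^1}$ is null-homotopic and the map extends, after shrinking $U_S$ slightly, to a map on $U_S$. The corresponding frame-preserving gauge transformation on $U_S$ intertwines the two gluings, so the algebroids are isomorphic as framed algebroids.

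Surjectivity: given $z\in Z\widetilde{G}$, pick a smooth path $f\colon\mathbb{R}\to\widetilde{G}$, constant near $0$, with $f(t+2\pi)=f(t)z$. Pull it back to a quasi-periodic map on $\widetilde{U}_{NS}$ via the angle coordinate. The triple $(\rho,f,\mathrm{Id})$ gives a gauge transformation of $TU_{NS}\times\g$, and we glue $TU_N\times\g$ and $TU_S\times\g$ along it. Because it is a Lie algebroid automorphism over the identity, the glued object is a transitive algebroid with framing at $E$, and its clutching invariant is $z$.

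Homomorphism and compatibility are checked together. Under connected sum, the clutching gauge transformations concatenate along the equator, so the monodromies multiply, and $\mathsf{c}$ is a group morphism. For $P\in\operatorname{Prin}_G(S^2,E)$ with clutching loop $\gamma\colon S^1\to G^\circ$, take the induced section atlas. By Proposition \ref{propcocycle} the transition data are $\chi_{NS}=\Delta(\sigma_{NS})$ with $\sigma_{NS}=\gamma$. The solution of $\Delta(g)=\chi_{NS}$ in $\widetilde{G}$ starting at $1$ is then the lift $\widetilde\gamma$ of $\gamma$, and its endpoint is the class of $\gamma$ in $\ker(\widetilde G\to G^\circ)=\pi_1(G)\subset Z\widetilde G$. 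This is exactly the diagram.

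I expect the main obstacle to be the well-definedness and injectivity steps. There one must carefully control how frame-preserving gauge transformations on the hemispheres act, which amounts to checking that Theorem \ref{gauge1} gives trivial $\rho$ on contractible sets, and that extensions across the annulus exist. The latter uses the simple connectivity of $\widetilde{G}$ together with a shrinking of the cover.
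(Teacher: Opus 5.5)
Your proposal is correct and follows the paper's route. In both, $\mathsf{c}$ is read off as $\rho(1_S)$ from the frame-preserving clutching gauge transformation on the annulus via Theorem \ref{gauge1}. You also supply well-definedness, bijectivity and the principal-bundle compatibility, which the paper's sketch only asserts or attributes to parallel transport; your compatibility argument goes through Proposition \ref{propcocycle} and the lift of the clutching loop to $\widetilde G$. The homomorphism property is left as informal as in the paper.

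There are two small slips.

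\emph{Injectivity.} Write $T,T'$ for your two transitions. Then $f'f^{-1}$ is the function of $T'\circ T^{-1}$, and it must be extended over $U_N$, which is exactly what shrinking $U_S$ permits. If you want to extend over $U_S$ instead, the right function is $f'^{-1}f$ (the function of $T'^{-1}\circ T$), and you must shrink $U_N$.

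\emph{Convention.} Your transition $\Psi_N^{-1}\Psi_S$ is the inverse of the paper's $\Psi_-^{-1}\Psi_+$, so your $\mathsf{c}$ is the inverse of the paper's. This is harmless for this statement but matters for the sign in Theorem \ref{Crainic=Meinrenken}.
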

\begin{proof}
We give a sketch of the proof which serves to define the map $\mathsf{c}$. Let $V_{\pm}$ be the contractible open sets obtained by removing the South/North poles. Then the annulus $V:=V_+ \cap V_-$ retracts to $S^1$. An algebroid $A$ with fixed framing at $E$ determines  (not unique)  trivializations $\Psi_{\pm}:TV_{\pm} \times \mathfrak{g} \to A\rest{V_{\pm}} $ both preserving the framing.
The gluing function $\Psi_-^{-1}\circ\, (\Psi_{+}\rest{TV \times \g})$
is an automorphism of the trivial algebroid $TV\times \mathfrak{g}$ fixing the framing.
By Theorem \ref{gauge1} this corresponds to a pair $(\rho,f)$ with $\rho:\pi_1(V,E)\to Z\widetilde{G}$ and $f: \widetilde{V}\to \widetilde{G}$ quasi periodic with respect to $\rho$. 

Finally, with a small abuse of notation (about the class of $A$), the isomorphism $\mathsf{c}$ is given by $$\mathsf{c}(A):=\rho(1_S).$$
It can be proven that it is well defined.
 Moreover in the case of a principal bundle, $\mathsf{c}$ is realized as the parallel transport with respect to some connection and the diagram commutes.
\end{proof}

\subsubsection*{The monodromy map}
Let $\xymatrix{L \ar@{^{(}->}^{}[r]&A\ar@{>>}[r]^-{\sharp}&TM}$; be a transitive algebroid. 
Fixing a point $m\in M$ serves to identify a 
 fiber $\mathfrak{g}=L_{m}$. 
 The monodromy map at $m$ as described by Meinrenken is defined by: $$c_m: \pi_2(M,m) \longrightarrow Z\widetilde{G}, \quad [u] \longmapsto \mathsf{c}(u^{!}A).$$
  Here, we are representing $\pi_2$ cycles with smooth maps $u: S^2 \to M$ such that $u(E)=m$.
  
\section{Identification of the Monodromy Homomorphisms}
Previously, we introduced the monodromy homomorphisms defined by Crainic--Fernandes and the version due to Meinrenken. In this section we prove that the two constructions coincide. We first establish the result for Lie algebroids over the sphere and then extend it to arbitrary manifolds using the functoriality with respect to pullbacks of algebroids.

We begin with the following observation.
\begin{lemma}
\label{homotopypaths}
Let A be a Lie algebroid over a manifold $M$ and let $H\colon TI^2\to A$ be an $A$-homotopy. Then,
\begin{itemize}
\item the A-paths $a_0=\left.H\right|_{T\left( \{0\}\times I \right)}$ and $a_\Delta=\left.H\right|_{T\left( \Delta I^2 \right)}$ are A-homotopic.
\item The A-paths $a_\Delta$ and $a_1=\left.H\right|_{T\left( \{1\}\times I \right)}$ are A-homotopic.

\end{itemize}
In both cases, the A-homotopy is induced by the homotopy between the vertical line $\{0\}\times I$ {\em{(}}resp. $\{1\}\times I${\em{)}} and the diagonal $\Delta I^2=\{(t,t)|t\in I\}$. Moreover, the homotopy is contained in the upper {\em{(}}resp. lower {\em{)}} triangular region with respect to the diagonal.
\end{lemma}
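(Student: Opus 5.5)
The plan is to build each $A$-homotopy explicitly by precomposing $H$ with a smooth map of the square, using that precomposition with a smooth map is again an algebroid morphism. Since $H\colon TI^2\to A$ is a Lie algebroid morphism covering some $\varphi\colon I^2\to M$, for any smooth $\Gamma\colon I^2\to I^2$ the composite $H\circ d\Gamma\colon TI^2\to A$ is a Lie algebroid morphism covering $\varphi\circ\Gamma$. This holds because $d\Gamma\colon TI^2\to TI^2$ is an algebroid morphism of tangent bundles covering $\Gamma$, and compositions of algebroid morphisms (in the sense defined via pullbacks) are morphisms. One can also check it directly in the presentation $H=a\,dt+b\,d\varepsilon$ via equation \eqref{homotopyequation}. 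The $A$-homotopy conditions then reduce to boundary conditions on $\Gamma$.

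For the first item I take $\Gamma(s,t)=(st,t)$, where $s$ is the homotopy parameter and $t$ the path parameter. Its image is $\{(\varepsilon,t):\varepsilon\le t\}$, one of the two triangular regions bounded by the diagonal. The boundary checks are as follows.
\begin{itemize}
\item At $s=0$, $\Gamma(0,t)=(0,t)$, so the restriction along $\partial_t$ is $a_0(t)$.
\item At $s=1$, $\Gamma(1,t)=(t,t)$, so we get $H(\partial_\varepsilon+\partial_t)$ at $(t,t)$, which is exactly $a_\Delta$ for the diagonal parametrized by $t$.
\item At $t=0$, $\Gamma(s,0)=(0,0)$ is constant, so the $\partial_s$-component $H(t\,\partial_\varepsilon)$ vanishes.
\item At $t=1$, $\Gamma(s,1)=(s,1)$, so the $\partial_s$-component is $H(\partial_\varepsilon)|_{(s,1)}=b(s,1)=0$ by \eqref{boundary conditions}.
\end{itemize}

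For the second item I use the analogous map $\Gamma(s,t)=(1-s+st,t)$. Its image is the complementary triangle $\{\varepsilon\ge t\}$. At $s=0$ it gives $a_1$ and at $s=1$ it gives $a_\Delta$. At $t=0$ the $\partial_s$-component is $H((t-1)\partial_\varepsilon)|_{(1-s,0)}=-b(1-s,0)=0$, and at $t=1$ the point is constant. Reversing $s$ and using symmetry of $A$-homotopy, which is obtained by precomposing with $(s,t)\mapsto(1-s,t)$, gives the statement in the order written.

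The main obstacle is regularity rather than algebra. $A$-homotopies are required to be $C^1$, and $H\circ d\Gamma$ is as regular as $H$, so this should be fine. Still, I would verify the morphism property concretely by writing $H\circ d\Gamma=\tilde a\,dt+\tilde b\,ds$ with $\tilde a=a\circ\Gamma+(\partial_t\Gamma_1)\,b\circ\Gamma$ and $\tilde b=(\partial_s\Gamma_1)\,b\circ\Gamma$, and checking that \eqref{homotopyequation} follows by the chain rule from the equation for $(a,b)$ together with the antisymmetry of $T_\nabla$. This is where care is needed, since the cross terms must cancel exactly against the torsion terms.
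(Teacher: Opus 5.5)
Your proposal is correct and is exactly the construction the statement itself points to (the paper records the lemma without a separate proof): pull $H$ back along $d\Gamma$ for a smooth map $\Gamma$ of the square that sweeps $\{0\}\times I$ (resp.\ $\{1\}\times I$) onto the diagonal inside the corresponding triangle, and your boundary checks for $\Gamma(s,t)=(st,t)$ and $\Gamma(s,t)=(1-s+st,t)$ are right. If you carry out the optional direct check, use the Crainic--Fernandes form $\partial_t b-\partial_\varepsilon a=T_\nabla(a,b)$, since the display \eqref{homotopyequation} has $a$ and $b$ interchanged on the left; with that form your formulas for $\tilde a,\tilde b$ give $\partial_t\tilde b-\partial_s\tilde a=(\partial_s\Gamma_1)\,(\partial_t b-\partial_\varepsilon a)\circ\Gamma$ and $T_\nabla(\tilde a,\tilde b)=(\partial_s\Gamma_1)\,T_\nabla(a,b)\circ\Gamma$, so the equation is preserved.
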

\begin{thm}\label{Crainic=Meinrenken}
For any transitive Lie algebroid on a manifold $M$, the Crainic--Fernandes and the Meinrenken monodromy maps agree.
\end{thm}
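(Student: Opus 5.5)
The plan is to reduce to the case $M=S^2$ with $u=\mathrm{id}$, using naturality of both maps, and then to compute the Crainic--Fernandes monodromy there from Meinrenken's clutching data.

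\textbf{Reduction to the sphere.} Let $[u]\in\pi_2(M,m)$ be represented by a smooth map $u\colon S^2\to M$ with $u(E)=m$. By the naturality \eqref{pullbackCrainic} of the Crainic--Fernandes map,
$$\partial^A_m([u])=\partial^{u^!A}_E([\mathrm{id}_{S^2}]).$$
By definition, $c_m([u])=\mathsf{c}(u^!A)$. The algebroid $u^!A$ has isotropy $L_m=\g$ at $E$, so it carries a canonical framing. Hence it suffices to prove the following: for a transitive algebroid $A\Rightarrow S^2$ framed at $E$,
$$\partial_E^A([\mathrm{id}])=\mathsf{c}(A).$$

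\textbf{Choice of the sweep-out.} Fix the trivializations $\Psi_\pm=\Theta_\pm+\psi_\pm$ on $V_\pm$ from the proof of Theorem \ref{thmMeinrenken}, both preserving the framing. Represent the degree-one class by $\varphi\colon I^2\to S^2$ with $\varphi(\partial I^2)=E$, built from a family of loops $\gamma_\varepsilon=\varphi(\varepsilon,\cdot)$ based at $E$ such that:
\begin{itemize}
\item for $\varepsilon\in[0,\tfrac12]$ the loops sweep the hemisphere contained in $V_+$, starting from the constant loop;
\item $\gamma_{1/2}$ is the equator traversed once with the orientation $1_S$;
\item for $\varepsilon\in[\tfrac12,1]$ the loops sweep the other hemisphere, contained in $V_-$, ending at the constant loop;
\item $\partial_\varepsilon\varphi$ vanishes near $\varepsilon=\tfrac12$, which one arranges by reparametrizing in $\varepsilon$.
\end{itemize}

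\textbf{Building the homotopy.} Since the monodromy does not depend on the auxiliary connection, and solutions of \eqref{homotopyequation} are local in $\varepsilon$, I will take the connection $\Theta_+$ on the strip $\varepsilon\le\tfrac12$ and $\Theta_-$ on the strip $\varepsilon\ge\tfrac12$. If a global connection is needed, use a partition of unity equal to $\Theta_\pm$ on the relevant hemispheres; the vanishing of $b$ near $\varepsilon=\tfrac12$ makes the switch harmless.

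On the first strip, $\Theta_+$ is a flat connection, i.e.\ an algebroid morphism. Hence $H_+=\Theta_+\circ d\varphi$ is a morphism $TI^2\to A$, so $a=\Theta_+(\partial_t\varphi)$ solves \eqref{homotopyequation} with $a(0,t)=0$. At $\varepsilon=\tfrac12$ we therefore reach the $A$-path $\Theta_+(\dot\gamma_{1/2})$.

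On the second strip, run the equation backwards from $\varepsilon=1$. The same argument with $\Theta_-$ shows that $\Theta_-(\dot\gamma_{1/2})$ is $A$-homotopic to the zero path over $E$.

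Continuing instead the solution that starts from $\Theta_+(\dot\gamma_{1/2})$, the difference lies in the isotropy. Write $\Theta_+=\Theta_-+\psi_-(\chi)$ with $\chi=\Delta(f)$ on the annulus $V$, using Theorem \ref{gauge1} for $\Psi_-^{-1}\Psi_+$ and its pair $(\rho,f)$. In the $\Psi_-$-trivialization, the $A$-path $\Theta_+(\dot\gamma_{1/2})$ reads $\dot\gamma_{1/2}\oplus\Delta(f)(\dot{\widetilde\gamma})$, where $\widetilde\gamma$ is the lift of the equator to $\widetilde V$.

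\textbf{Identifying the endpoint.} Inside the trivial groupoid $\widetilde V\times\widetilde V\times\widetilde G$, this $A$-path integrates to the $\mathcal G$-path $t\mapsto(\widetilde\gamma(t),\widetilde m,f(\widetilde\gamma(t)))$. The trivial part of the homotopy (the $\Theta_-$ sweep) moves the base loop to the constant loop without changing the $\widetilde G$-component of the endpoint. By Lemma \ref{homotopypaths}, this lets me transport homotopies along the triangular regions and compare with the isotropy path $a_1$. Hence the endpoint $g(1)$ equals
$$f(\widetilde\gamma(1))\,f(\widetilde m)^{-1}=f(\widetilde m\cdot 1_S)=\rho(1_S)=\mathsf{c}(A).$$

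\textbf{Main obstacle.} The delicate step is this last identification. I must justify rigorously that the solution of \eqref{homotopyequation} on the second strip, started at $\Theta_+(\dot\gamma_{1/2})$ rather than $\Theta_-(\dot\gamma_{1/2})$, differs from the trivial one exactly by left (or right) translation by the endpoint of the integrated $\Delta(f)$-path. This should use centrality of $\rho(1_S)$ and the multiplicativity of $\mathcal G(A)$ restricted to $V_-$, where $A$ is integrable. I must also track orientations and conventions (right Darboux derivative, the direction of sweep, the orientation $1_S$), so that no stray inverse appears and the result is $\rho(1_S)$ rather than $\rho(1_S)^{-1}$.
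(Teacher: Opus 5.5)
Your argument is correct and is a variant of the paper's proof, not a different strategy. It shares the reduction to $S^2$ by naturality, the framed trivializations $\Psi_\pm$, integration in the trivial groupoids over $V_\pm$, and the endpoint $f(\widetilde E\cdot 1_S)=\rho(1_S)$. The difference is how the square is cut.

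\begin{itemize}
\item The paper takes an \emph{arbitrary} $A$-homotopy $H$ over $\varphi$ and splits $I^2$ along the diagonal. It uses Lemma \ref{homotopypaths} to get $\Psi_+^{-1}a_\Delta\sim 0$ in $TV_+\times\g$ and $\Psi_-^{-1}a_\Delta\sim 0\oplus a_1$ in $TV_-\times\g$. It then compares the two integrated paths through the groupoid automorphism $F$ integrating $\Psi_{-|+}$.
\item You cut into horizontal strips and choose $b$ so that $a_{1/2}=\Theta_+(\dot\gamma_{1/2})$ exactly. The $+$ side then needs no argument ($g_+\equiv1$). The automorphism $F$ is replaced by the identity $\Psi_-^{-1}\circ\Theta_+=\mathrm{id}\oplus\chi$ with $\pi^*\chi=\Delta(f)$.
\end{itemize}
Your route is more explicit. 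The paper's works for any lift $H$, so it never has to construct a special $b$.

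The ``main obstacle'' you flag is not a real obstacle, and no translation or centrality argument is needed. The endpoint argument you already sketched is enough:
\begin{itemize}
\item Restricted to $[\tfrac12,1]\times I$, $H$ is an $A$-homotopy from $\Theta_+(\dot\gamma_{1/2})$ to $a_1$. It lies over the hemisphere contained in $V_-$.
\item Via $\Psi_-$, $A|_{V_-}$ is integrated by the $s$-simply connected groupoid $V_-\times V_-\times\widetilde G$. So $A$-homotopic paths integrate to paths with the same endpoint.
\item The integral of $\Psi_-^{-1}\Theta_+(\dot\gamma_{1/2})=\dot\gamma_{1/2}\oplus\chi(\dot\gamma_{1/2})$ is $t\mapsto(\gamma(t),E,f(\widetilde\gamma(t)))$. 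To see this, push forward along $(\widetilde y,\widetilde x,g)\mapsto(\pi\widetilde y,\pi\widetilde x,g)$, as the paper does.
\item By the framing, $\Psi_-^{-1}\circ a_1=0\oplus a_1$.
\end{itemize}
Comparing endpoints gives $g(1)=\rho(1_S)$ directly. The only orientation issue left is choosing the sweep so that $\varphi$ has degree one and $\gamma_{1/2}$ represents $1_S$, exactly as in the paper. This step is precisely the paper's $g_-(1)=\partial(1_{S^2})$.

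Three clean-ups:
\begin{itemize}
\item In your strip picture, Lemma \ref{homotopypaths} and the triangles play no role.
\item The relevant homotopy is not ``the $\Theta_-$ sweep'': it starts at $\Theta_+(\dot\gamma_{1/2})$, and only the fact that it lies in $V_-$ matters.
\item Drop the partition-of-unity remark. No single connection can agree with $\Theta_+$ and $\Theta_-$ on both closed hemispheres, since they differ by $\psi_-\circ\chi$ along the equator, which is nonzero whenever $\rho(1_S)\neq 1$. The remark is unnecessary anyway: your piecewise $b$ is smooth because it vanishes near $\varepsilon=\tfrac12$. Independence of the monodromy from the chosen lift is also what the paper relies on when it takes an arbitrary $H$.
\end{itemize}
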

\begin{rmk}\label{phi}
Before giving the proof, we note that two equivalent models of $\pi_2(M,m)$ are being used. In the construction of the Crainic-Fernandes monodromy, $\pi_2(M,m)$ uses smooth functions from the square collapsing the boundary to the reference point, wherease in the Meinrenken's construction, $\pi_2(M,m)$ is described using smooth maps from the sphere. These two models can be canonically identified by fixing a smooth map $\varphi\colon I^2\to S^2$, that is constant on the boundary and restricts to an orientation preserving diffeomorphism on $I^2-\partial I^2$.  The identification given by the composition with $\varphi$ is an isomorphism unique up to isotopy.
\end{rmk}

\begin{proof}
We first prove the theorem for $M=S^2$ with base point the east pole $E$ and then extend it to the general case by naturality. 
Let $\g=L_E$ and choose the framing: $\operatorname{Id}:\g \to L_E.$

Let $V_{\pm}$ be
the contractible open sets obtained by removing the south and north poles, respectively. Put $V:=V_+ \cap V_-$. As in the proof of \eqref{thmMeinrenken}, fix trivializations $\Psi_{\pm}: TV_{\pm} \times \mathfrak{g} \to A\rest{V_{\pm}}$.

Now the requirement that these trivializations preserve the framing means that $\Psi_{\pm}$ are the identity when restricted to the fiber of $L$ over $E$.

Let $\widetilde{V}$ be the universal cover of $V$; we realise it as the space of homotopy classes of paths starting from $E$
(the $s$-fiber of the fundamental groupoid).
In particular 
we fix the point $\widetilde{E}$ above $E$ corresponding to the trivial path at $E$.
With these choices, 
the automorphism  $\Psi_{-|+}:=\Psi_-^{-1}\circ\, (\Psi_{+}\rest{TV \times \g})\in \operatorname{Aut}(TV \times \mathfrak{g})$ corresponds to a pair $(\rho,f)$ with $f(\widetilde{E})=1$ and, by definition
$$\mathsf{c}(A)=\rho(1_S).$$
Here $1_S$ denotes the generator of $\pi_1(V,E)\cong \mathbb{Z}$ represented by the smooth loop $\gamma\colon I\to S^2$ based at $E$ and parametrizing the equator $S^1 \subset S^2$
with the choices already made before Theorem \ref{thmMeinrenken}.

Let $\varphi\colon I^2\to S^2$ be a smooth map as in Remark \ref{phi} representing the fundamental class of $\pi_2(S^2,E)\cong \mathbb{Z}$. It may be chosen so that $\left.\varphi\right|_{\Delta I^2}=\gamma$, and such that the image of the open upper (respectively, lower) triangular region of $I^{2}$, with respect to the diagonal $\Delta I^{2}$, contains the north (respectively, south) pole. Let $H=a\,dt+b\,d\varepsilon \colon TI^2\to A$ be an A-homotopy over $\varphi$, satisfying \eqref{homotopyconditions}, and let $a_0,a_1,a_\Delta$ be the $A$-paths as in Lemma \ref{homotopypaths}. Now define the $TV_\pm \times \mathfrak{g}\,$-paths $$\alpha_{\pm}:=\Psi_{\pm}^{-1}\circ a_{\Delta}=\dot\gamma+\xi_{\pm},$$
where $\xi_\pm$ are $\mathfrak{g}\,$-paths. These are also
$TV\times\mathfrak{g}\,$-paths and, by construction,
\begin{equation}\label{transition}
\Psi_{-|+}(\alpha_+(t))=\alpha_-(t).
\end{equation}

Moreover, by the Lemma \ref{homotopypaths} and the boundary conditions \eqref{homotopyconditions}:
\begin{itemize}
\item $\alpha_+$ is $TV_+\times \mathfrak{g}\,$-homotopic to $\Psi_+^{-1}\circ a_0$, hence to the zero $TV_+\times \mathfrak{g}\,$-path,
\item $\alpha_-$ is $TV_-\times \mathfrak{g}\,$-homotopic to $\Psi_-^{-1}\circ a_1$. Since $\Psi_-$ preserves the framing at $E$, we have $\Psi_-^{-1}\circ a_1(t)=0\oplus a_1(t)$.
\end{itemize}

The $TV\times\mathfrak{g}\,$-paths $\alpha_\pm$ integrate to $\mathcal{G}$-paths $\eta_{\pm}(t)=([\widetilde\gamma(t),\widetilde{E}],g_{\pm}(t))$, where $\mathcal{G}:=\dfrac{\widetilde{V}\times \widetilde{V}}{\pi_1(V,E)}\times \widetilde{G}$. Here $\widetilde{\gamma}$ denotes the lift of $\gamma$ starting at $\widetilde{E}$, and $g_\pm$ are the $\widetilde{G}$-paths integrating $\xi_{\pm}$. 
The automorphism $\Psi_{-|+}$ integrates to the automorphism $F$ of $\mathcal{G}$, given by
$$ F([\widetilde{y},\widetilde{x}],g)=([\widetilde{y},\widetilde{x}],f(\widetilde{y})gf(\widetilde{x})^{-1}).$$
 Integrating \eqref{transition}, we have
$$ F(\eta_+(t))=\eta_-(t).$$
Comparing the second components, it follows that 
$$ f(\widetilde\gamma(t))g_+(t)f(\widetilde{E})^{-1}=g_-(t). $$
Since $\alpha_+$ is $TV_+\times \mathfrak{g}\,$-homotopic to the zero $TV_+\times \mathfrak{g}\,$-path, we have $g_+(1)=1$. Since $\alpha_-$  is $TV_-\times\mathfrak{g}\,$-homotopic to $\Psi_-^{-1}\circ a_1$, it follows that $g_-$ is homotopic to the integral path of $a_1$ inside $\widetilde{G}$, then $$g_-(1)=\partial([\varphi])=\partial(1_{S^2}).$$ 
Here we have used the fact that $\alpha_{+}$ also integrates to a unique
$(V_+\times V_+ \times{\widetilde{G}})$-path whose third component is necessarily given by $g_{+}(t)$.
In other words, the inclusion $TV \times \mathfrak{g} \to TV_+ \times \mathfrak{g}$ integrates to a groupoid morphism $\dfrac{\widetilde{V}\times \widetilde{V}}{\pi_1(V,E)}\times \widetilde{G}\to V_+ \times V_+ \times \widetilde{G}$ explicitly given by $([\tilde{x},\tilde{y}],g)\mapsto (\tilde{x},\tilde{y},g).$
The same argument applies to $\alpha_{-}$.
Finally using $f(\widetilde{E})=1$ and $f(\widetilde\gamma(1))=f(\widetilde{E}\cdot [\gamma])=\rho([\gamma])$, we conclude
$$ \partial(1_{S^2})=\rho(1_S)=c(A). $$

Now let $A$ be a Lie algebroid over a general manifold $M$. By naturality, for any smooth map $u\colon S^2 \to M$, 
$$ c_E([u])=\mathsf{c}(u^!(A))=\partial_{u^!(A)}(1_{S^2})=(\partial_{A}\circ u_*)(1_{S^2})=\partial_A ([u_0]),$$
where $u_0=\sigma\circ u \colon I^2 \to M$.
\end{proof}

  \subsection{The monodromy map as the Mackenzie class}
  There remains to identify the monodromy map $c: \pi_2(M,m) \to Z\widetilde{G} $ defined by Meinrenken with the Mackenzie obstruction class. 
  Recall the natural isomorphism $\mathscr{C}:\check{H}^*(M;G)\to H^*_{\operatorname{sing}}(M;G)$; we combine it with the natural pairing $$H^2_{\operatorname{sing}}(M;Z\widetilde{G}) \times \pi_2(M,m)\to Z\widetilde{G}, \quad \langle \omega,[u]\rangle =\langle \omega, u_*[1_{{S}^2}]\rangle , $$
  where
  $u_*:H_2(S^2) \to H_2(M)$ is the induced map and
  $1_{S^2}\in H_2(S^2)$ is the homology fundamental class (integer coefficients).

\begin{thm}\label{mcmein}                               
Let $A$ be a transitive algebroid 
on a simply connected manifold $M$
with reference point $m\in M$. Let $\widetilde{G}$ be the simply connected Lie group integrating $\mathfrak{g}=L_{m}$. For the Mackenzie class and the monodromy map we have: 
 $$\langle \mathscr{C}(e^A), \cdot \, \rangle =-\partial_m $$
as morphisms $\pi_2(M,m) \to Z\widetilde{G}.$

 \end{thm}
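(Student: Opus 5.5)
Since Theorem \ref{Crainic=Meinrenken} identifies $\partial_m$ with Meinrenken's map $c_m$, it suffices to prove $\langle \mathscr{C}(e^A),[u]\rangle=-\mathsf{c}(u^!A)$ for every smooth $u\colon S^2\to M$ with $u(E)=m$. Both sides are natural under pullback. On the left, Proposition \ref{functorialityMackenzie} gives $e^{u^!A}=u^*e^A$. Since $\mathscr{C}$ is natural, we get $\langle \mathscr{C}(e^A),u_*1_{S^2}\rangle=\langle \mathscr{C}(e^{u^!A}),1_{S^2}\rangle$. On the right, $c_m([u])=\mathsf{c}(u^!A)$ holds by definition. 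The isotropy of $u^!A$ at $E$ is $L_m=\g$, so the group $\widetilde G$ does not change. We are therefore reduced to $M=S^2$, base point $E$, and the single identity
\[\langle \mathscr{C}(e^A),1_{S^2}\rangle=-\rho(1_S),\]
where $(\rho,f)$ is the pair attached by Theorem \ref{gauge1} to the clutching automorphism $\Psi_{-|+}$.

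On $S^2$ we compute $e^A$ with the two-set cover $\{V_+,V_-\}$, refined to a good cover when needed. Choose framing-preserving trivializations $\Psi_\pm$. Since $V_\pm$ is contractible, the isotropy trivializations come from sections of $\operatorname{Int}(A)_E$, so they are admissible in Mackenzie's recipe. The transition datum on $V$ is $\Psi_{-|+}$, which by Theorem \ref{gauge1} is $\widetilde X\oplus\xi\mapsto \widetilde X\oplus\Delta(f)\widetilde X+\operatorname{Ad}_f\xi$ on $\widetilde V$. Hence a local primitive $s$ with $\Delta(s)=\chi_{-+}$ and $\operatorname{Ad}\circ s=a_{-+}$ is given by $f$ composed with a local section of $\widetilde V\to V$. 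Going once around the annulus, this primitive jumps by the central factor $\rho(1_S)$.

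To turn this into a Čech cocycle, cover $V$ by two arcs $W_1,W_2$ whose intersection has two components, and use the cover $\{V_+,V_-\}$ refined so that the overlaps are contractible. On the sets of this cover, take $s$ to be single-valued branches of $f$. With the convention $e_{ijk}=s_{jk}s_{ik}^{-1}s_{ij}$, the cocycle $e_{ijk}$ is trivial except on one triple overlap, where it equals $\rho(1_S)^{\pm1}$. This is the standard clutching computation of the Čech class of a bundle on $S^2$ glued by a loop.

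Finally, evaluate through $\mathscr{C}$ on the fundamental class using the Čech--de Rham type bicomplex of the appendix. A Čech 2-cocycle supported on one triple overlap of this "two caps plus annulus split into arcs" cover pairs with $1_{S^2}$ to that value, with a sign fixed by the orientation. This step is the main obstacle. The difficulty is bookkeeping the signs: the ordering convention $s_{jk}s_{ik}^{-1}s_{ij}$, the direction of $1_S$ (counterclockwise, viewed from the north pole), and the orientation of $S^2$ implicit in $\mathscr{C}$. I would check these against the symplectic example from the introduction. There $\widetilde G=\R$ and $f$ is an integral of $\theta_--\theta_+$, so $\rho(1_S)=\oint_\gamma(\theta_--\theta_+)$. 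By Stokes with $d\theta_\pm=-\omega$, this equals $-\int_{S^2}\omega$, which is consistent with $\langle\mathscr{C}_{dR}(e),[S^2]\rangle=\int\omega=-\rho(1_S)$. I would use this example to calibrate the sign in general, since the combinatorics is the same. Alternatively, one can compare with the diagram in Theorem \ref{thmMeinrenken} for principal bundles, where both sides reduce to the classical clutching class.
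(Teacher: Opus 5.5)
Your reductions coincide with the paper's:
\begin{itemize}
\item passing to $c_m$ via Theorem \ref{Crainic=Meinrenken};
\item pulling back along $u\colon S^2\to M$ using Proposition \ref{functorialityMackenzie} and naturality of $\mathscr{C}$;
\item building Mackenzie's primitives $s$ from branches of the clutching function $f$ along a lift of $1_S$.
\end{itemize}
The gap is the step you flag as the main obstacle, and it is not a side issue. After the reduction, the whole content of the theorem is the value, sign included, of $\langle\mathscr{C}(e^A),1_{S^2}\rangle$, and your proposal never computes it. Three things are left open. The cover is never pinned down (two arcs of the annulus meeting in two components do not give a good cover). The surviving cocycle value is left as $\rho(1_S)^{\pm1}$. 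The sign of the pairing is undetermined.

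The paper settles all three with Proposition \ref{fundamentalpairing}. It uses the tetrahedral good cover with $V_-=U_3$, $V_+=U_0\cup U_1\cup U_2$ and $E=\mathsf{e}_2$. Weil's zig-zag, with $D=\delta+(-1)^pd$ and hence $\mathsf{r}\mu=-d\theta$, gives $\langle\mathscr{C}[\varphi],\mathsf{S}^2\rangle=\varphi_{012}(\mathsf{e}_3)-\varphi_{013}(\mathsf{e}_2)+\varphi_{023}(\mathsf{e}_1)-\varphi_{123}(\mathsf{e}_0)$. Take $s_{ij}=1$ for $i,j\le 2$ and $s_{i3}=f^{-1}\circ\eta_{i3}$, with the local lifts $\eta_{i3}$ continued along $1_S$ from $\widetilde{E}$. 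Then the only nontrivial term is $e_{013}(\mathsf{e}_2)=f(\widetilde{E})^{-1}f(\widetilde{E}\cdot 1_S)=\rho(1_S)$, and it enters with a minus sign.

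Your calibration idea can be made into a proof, but it needs an argument you do not give. For a fixed cover and fixed lifts, $e^A$ is the image of a single integral cocycle under $\mathbb{Z}\to Z\widetilde{G}$, $1\mapsto\rho(1_S)$. By naturality of $\mathscr{C}$ and of the pairing in the coefficients, $\langle\mathscr{C}(e^A),1_{S^2}\rangle=\rho(1_S)^n$ for a universal integer $n$. One example with $\rho(1_S)$ of infinite order then fixes $n$.

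That example must be computed correctly in the same conventions, and yours is not. Since $\overline{\chi}_{-|+}=\overline{\psi}_-^{\,-1}\circ(\overline{\Theta}_+-\overline{\Theta}_-)$, one has $\Delta(f)=\theta_+-\theta_-$, not $\theta_--\theta_+$. Let $D_\pm$ be the northern and southern hemispheres. For $\gamma$ counterclockwise seen from the north pole, $\gamma=\partial D_+$ in the outward orientation. Stokes then gives $\oint_\gamma(\theta_--\theta_+)=\int_{D_-}\omega+\int_{D_+}\omega=+\int_{S^2}\omega$. The two slips cancel, so your conclusion $\rho(1_S)=-\int_{S^2}\omega$ is right only by accident. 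This is exactly why the sign cannot be left to a heuristic check.

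The other input, $\mathscr{C}_{dR}(e^{A_\omega})=[\omega]$, is only asserted in the introduction. Verifying it is a two-line zig-zag ($\psi=s$, $\theta=(\theta_i)$, $\mathsf{r}\mu=-d\theta=\omega$), which you would also have to include.
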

 \begin{proof}
By Theorem \ref{Crainic=Meinrenken} it sufficies to prove the statement using the monodromy map $c_m$.

The proof consists of two steps:
\begin{enumerate}
\item the case of $S^2$;
\item the general case, which follows by naturality of the constructions.
\end{enumerate}

\noindent (1) 
Let $A$ be a transitive algebroid on $S^2$ with reference point $m=E$.
We make all the choices we already made in the proof of proposition \ref{fundamentalpairing}; in adjoint we require that $\mathsf{e}_i$ correspond to the standard points $\mathbb{R}^3$ and the baricenter of the base is mapped to the south pole. In particular $\mathsf{e}_2=E$.

Also consider the open cover $\mathcal{V}=\{V_+,V_-\}$ of $S^2$ with 
$$V_+=U_0 \cup U_1 \cup U_2, \quad V_-=U_3.$$

 Then of course $\mathcal{U}$ is a refinement of $\mathcal{V}$ with refining function $\lambda:\{0,1,2,3\}\to \{+,-\}$ such that
$\lambda(i)=+$ for $i=0,1,2$ and $\lambda(3)=-$.


The idea is to compute the Mackenzie class with respect to $\mathcal{U}$ using an algebroid atlas compatible with Meinrenken's description of the gauge transformations. 

So we start with trivializations $\overline{\psi}_{\pm}:V_{\pm} \times \g \to L\rest{V_{\pm}}$ corresponding to sections of $r:\operatorname{Int}(A)_E \to S^2$ over $V_{\pm}$, chosen in a way that these map $E$ to the identity. The induced framings at $E$ are both the identity of $\g=L_E$, and 
$V=V_+ \cap V_-$ is an open annulus retracting to to the equator $S^1$. 
The gluing function $\overline{a}_{-|+}$, defined over $V$ by $\overline{\psi}_-^{\,-1} \circ \left( \overline{\psi}_+\,\rest{V\times \g}\right)$ takes its values in $\operatorname{Int}(A)_E^E=\operatorname{Ad}(\widetilde{G})$. 
Theorem \ref{completiontheorem}, ensures the existence of flat connections $\overline{\Theta}_{\pm}$ and an algebroid atlas $\overline{\Psi}_{\pm}:TV_{\pm} \times \g \to A\rest{V_{\pm}}$ in the form $\overline{\Psi}_{\pm}=\overline{\Theta}_{\pm} + \overline{\psi}_{\pm}$.

We have a transition automorphism
$$\Psi_{-|+}:=\left(\overline{\Psi}_-\right)^{\,-1} \circ \left( \overline{\Psi}_+\,\rest{TV\times \g}\right): TV\times \g \longrightarrow TV \times \g,$$
$$X+\xi \longmapsto X+\overline{\chi}_{-|+}(X)+ \overline{a}_{-|+}(\xi),$$ with $$\overline{\chi}_{-|+}:=\overline{\psi}_-^{\,-1}\,\circ \overline{\ell}_{-|+}, \quad \overline{\ell}_{-|+}=\overline{\Theta}_+ -\overline{\Theta}_-.$$
 The framing at $E$ is preserved, so by Theorem \ref{gauge1} this is completely described by a representation
$\rho:\pi_1(V,E) \longrightarrow Z\widetilde{G}$ and a map $f$
defined on the universal cover of $V$ denoted by 
$ \widetilde{V}$.
The map $f: \widetilde{V}\longrightarrow \widetilde{G}$ is such that $f(\widetilde{E})=1_{Z\widetilde{G}}$ and $f(x \cdot \gamma)=f(x)\rho(\gamma)$ for every $x\in \widetilde{V}$ and $\gamma \in \pi_1(V,E)$.
Here $\widetilde{E}$ could be any fixed point in $\widetilde{V}$ above $E$. In particular we may keep the same choices as in the proof of Theorem \ref{Crainic=Meinrenken}.

 From \eqref{Conditionspullbackgauge}, it follows that $\Delta(f)=\pi^* (\overline{\chi}_{-|+})$ and $\operatorname{Ad}_{f(x)}= \overline{a}_{-|+}(\pi(x))$, for every $x\in \widetilde{V}$. 
Moreover, by definition $$\mathsf{c}(A)=\rho(1_S).$$

Now we come back to the good cover $\mathcal{U}$. 
The above choices induce Mackenzie's datas by:
 $$ \psi_i:=\overline{\psi}_{\lambda(i)}\rest{U_i \times \g} \quad \textrm{and} \quad 
\Theta_i:=\overline{\Theta}_{\lambda(i)}\rest{U_i}.$$

Then $\Psi_i:=\Theta_i + \psi_i$ define an algebroid atlas over $\mathcal{U}$ whose 
associated system of transition datas $(U_i,a_{ij},\chi_{ij})$ has the property:								
$$\chi_{ij}=\begin{cases}
\overline{\chi}_{+|-}\,\rest{TU_{i3}} & \text{if } i=0,1,2,\, j=3\\
0 \,\rest{TU_{ij}}& \text{otherwise}
\end{cases}, \quad a_{ij}=\begin{cases}
\overline{a}_{+|-}\,\rest{U_{i3}\times\mathfrak{g}} & \text{if } i=0,1,2,\, j=3\\
\operatorname{id} \,\rest{U_{ij}\times\mathfrak{g}}& \text{otherwise}
\end{cases}.$$
This system meets all the requirements to compute Mackenzie class.

Now, for $i=0,1,2$ we have $U_{i3}=U_i \cap U_3 \subseteq V$; any choice of sections 
$\eta_{i3}: U_{i3} \longrightarrow \widetilde{V}$ will satisfy:
$$\Delta(f^{-1} \circ \eta_{i3})=\overline{\chi}_{+|-}\rest{U_{i3}}=\chi_{i3}, \quad i=0,..,2.$$

In particular to represent $e^A$ we may take $s_{i3}:=f^{-1}\circ \eta_{i3}$ for $i=0,1,2$ and $j=3$ and the identity of $\widetilde{G}$ otherwise.

We make a further choice for the sections $\eta_{i3}$ i.e. we require: 

$$\eta_{13}(e_2)=\widetilde{E}, \quad \eta_{1,3}(\mathsf{e}_0)=\eta_{2,3}(\mathsf{e}_0) \quad  \textrm{and}\quad \eta_{2,3}(\mathsf{e}_1)=\eta_{0,3}(\mathsf{e}_1).$$

This is possible and correponds to the lifting of  $1_S \in \pi_1(V,E)$
with starting point $\widetilde{E}$.
Therefore for the resulting endpoint we have:
$$\eta_{0,3}(E)=\eta_{1,3}(E) \cdot 1_S, \quad 1_S \in \pi_1(V,E).$$

Finally we compute the value of the Mackenzie class $e$ constructed with these choices on the fundamental cycle of $S^2$.
By proposition \ref{fundamentalpairing} we have:
\begin{align*}
\langle \mathscr{C}[e^A], 1_{S^2} \rangle &=e_{(0,1,2)}(\mathsf{e}_3)\cdot e_{(0,1,3)}(\mathsf{e}_2)^{-1}\cdot e_{(0,2,3)}(\mathsf{e}_1)\cdot e_{(1,2,3)}(\mathsf{e}_0)^{-1}
\\ & = s_{12}s_{02}^{-1}s_{01}(\mathsf{e}_3)\cdot s_{13}s_{03}^{-1}s_{01}(\mathsf{e}_2)^{-1}\cdot s_{23}s_{03}^{-1}s_{02}(\mathsf{e}_1)\cdot s_{23}s_{13}^{-1}s_{12}(\mathsf{e}_0)^{-1}
\\ & = s_{03}s_{13}^{-1}(\mathsf{e}_2)\cdot s_{23}s_{03}^{-1}(\mathsf{e}_1)\cdot s_{13}s_{23}^{-1}(\mathsf{e}_0)
\\ & = f(\eta_{03}(\mathsf{e}_2))^{-1} f(\eta_{13}(\mathsf{e}_2))+ f(\eta_{23}(\mathsf{e}_1))^{-1} f(\eta_{03}(\mathsf{e}_1))+f(\eta_{13}(\mathsf{e}_0))^{-1}f(\eta_{23}(\mathsf{e}_0))\\ & = f(\widetilde{E} \cdot 1_S)^{-1} =\rho(1_S)^{-1}=-\rho(1_S)=-\mathsf{c}(A).
\end{align*}

\noindent (2) If $A$ is a transitive Lie algebroid over a simply connected manifold $M$, the result follows by applying the previous step and Proposition \ref{functorialityMackenzie} to the pullback $u^!A$,  where $u\colon S^2\to M$ is a smooth map sending $E$ to a reference point $m$. This concludes the proof.
\end{proof}
\begin{corl}Let $A$ be a transitive algebroid on a manifold $M$ with reference point $m\in M$. Let $\widetilde{M}$ be the universal cover with projection $\pi\colon \widetilde{M}\to M$ and reference point $\widetilde{m}$ over $m$. By definition $e^A:=e^{\pi^!A}\in \check{H}^2(\widetilde{M};ZG) $. Then,
$$\langle \mathscr{C}_{\widetilde{M}}(e^A), \, \cdot \, \rangle =-\partial_{\widetilde{m}}\circ \pi_*,$$
\end{corl}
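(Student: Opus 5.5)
The corollary reduces to Theorem \ref{mcmein} applied to $\pi^!A$ on the simply connected manifold $\widetilde{M}$, combined with the naturality of the monodromy map, formula \eqref{pullbackCrainic}. First we check that all the objects involved are defined with the same group. The pullback $\pi^!A\Rightarrow\widetilde{M}$ is transitive with adjoint bundle $\pi^*L$. Its isotropy at $\widetilde{m}$ is therefore $(\pi^*L)_{\widetilde m}=L_m=\g$. So the simply connected group $\widetilde{G}$ and its center $Z\widetilde{G}$ are the same for $A$ at $m$ and for $\pi^!A$ at $\widetilde m$. Accordingly, $e^A:=e^{\pi^!A}$ lives in $\check{H}^2(\widetilde M;Z\widetilde G)$; the $ZG$ in the statement should be read as $Z\widetilde{G}$.

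Next, apply Theorem \ref{mcmein} to the transitive algebroid $\pi^!A$ on the simply connected manifold $\widetilde{M}$ with reference point $\widetilde m$. This gives
\[
\langle \mathscr{C}_{\widetilde M}(e^{\pi^!A}),\,\cdot\,\rangle=-\partial^{\pi^!A}_{\widetilde m}
\]
as homomorphisms $\pi_2(\widetilde M,\widetilde m)\to Z\widetilde G$. Now use formula \eqref{pullbackCrainic} with $f=\pi$, $B=\widetilde M$ and base point $\widetilde m$. It gives
\[
\partial^{\pi^!A}_{\widetilde m}=\partial^{A}_{m}\circ\pi_*,
\]
where $\pi_*\colon\pi_2(\widetilde M,\widetilde m)\to\pi_2(M,m)$. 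Substituting yields the identity of the corollary, with $\partial_{\widetilde m}\circ\pi_*$ interpreted as the monodromy of $A$ at $m=\pi(\widetilde m)$ precomposed with $\pi_*$. Since $\pi_*$ is an isomorphism on $\pi_2$ for a covering map, the class $e^A$ then determines $\partial^A_m$ completely: $\partial^A_m=-\langle\mathscr{C}_{\widetilde M}(e^A),\pi_*^{-1}(\cdot)\rangle$.

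The only delicate step is the bookkeeping of the identification of isotropy algebras. Formula \eqref{pullbackCrainic} implicitly uses the canonical isomorphism $(\pi^!A)$-isotropy at $\widetilde m$ $\cong L_m$. We must make sure that the framing used in Theorem \ref{mcmein} for $\pi^!A$ is exactly this one, so that no automorphism of $Z\widetilde G$ intervenes. This holds because the pullback atlas in the proof of Proposition \ref{functorialityMackenzie} is built from trivializations of $A$ composed with $\pi$. Hence the pullback atlas and the pullback $A$-homotopies both use the tautological identification of fibres.
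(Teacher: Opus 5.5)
Your proposal is correct and is the argument the paper intends. The corollary is stated without proof, as the immediate combination of Theorem \ref{mcmein} for $\pi^!A$ on the simply connected $\widetilde M$ with the naturality formula \eqref{pullbackCrainic} for $f=\pi$. Your readings of $ZG$ as $Z\widetilde G$ and of $\partial_{\widetilde m}\circ\pi_*$ as $\partial^A_m\circ\pi_*$ are the right ones.

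The only misplaced point is the appeal to Proposition \ref{functorialityMackenzie} in your last paragraph. That proposition needs a simply connected target, so it does not apply to $\pi$, and it is not needed anyway. The identification of the isotropy of $\pi^!A$ at $\widetilde m$ with $L_m$ is tautological, since it is $\{0\}\oplus L_m\subset T_{\widetilde m}\widetilde M\oplus A_m$.
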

where $\pi_*\colon \pi_2(\widetilde{M},\widetilde{m})\to \pi_2(M,m)$ is the induced isomorphism.
 \appendix

 \section{\v{C}ech to singular cohomology}

In this section we compare \v{C}ech and singular cohomology 
with coefficients in an abelian group $G$.
The topic is classical and can be found in many textbooks. 
Since the isomorphism is crucial in our treatment we include a somewhat detailed discussion
using a bicomplex as in Weil's proof of the de Rham Theorem \cite{zbMATH03073643} (cf. \cite{zbMATH07420243,zbMATH03581280,zbMATH03782042}).

We shall deal with smooth manifolds even if the results in this section hold in more generality (paracompact locally contractible spaces) perhaps with minor modifications in the proofs.

\subsubsection*{ \v{C}ech cohomology}
Let $\mathcal{F}$ be a presheaf on a manifold $M$ 
and fix any open cover $\mathcal{U}=\{U_{\alpha}\}_{\alpha \in \mathcal{A}}$ of $M$. It is sufficient to restrict to locally finite countable covers.

For a subset $A\subset \mathcal{A}$ put: $U_A:=\bigcap_{a\in A}{U_a}$ and denote with $N(\mathcal{U})$ the nerve of the covering. This is the collection of all the finite subsets $A$ of $\mathcal{A}$ such that $U_A \neq \emptyset$. The nerve is an (abstract) simplicial complex:
any element $A \in N(\mathcal{U})$ is 
 a face. If $|A|=q+1$, we say that $A$ is $q$-dimensional. An (ordered) $q$-simplex of $N(\mathcal{U})$ is a map $\sigma: \{0,1,...,q\} \to 
 \mathcal{A}$ such that $\sigma(\{0,1,...,q\}) \in N(\mathcal{U})$. Given a $q$-simplex $\sigma$, we set
  $U_{\sigma}:=\bigcap_{a\in \sigma(\{0,1,...,q\})} U_{a}$ and
denote with $S_q(\mathcal{U})$ the set of all the $q$-dimensional  simplexes.
We have an associated 
\v{C}ech complex
\begin{equation}\label{cechcomplex}
\begin{tikzcd}
	{\check{C}^0(\mathcal{U};\mathcal{F}}) & {\check{C}^1(\mathcal{U};\mathcal{F})} & {\check{C}^2(\mathcal{U};\mathcal{F})} & \cdots
	\arrow["\delta", from=1-1, to=1-2]
	\arrow["\delta", from=1-2, to=1-3]
	\arrow["\delta", from=1-3, to=1-4]
\end{tikzcd}
\end{equation}
with $\check{C}^q(\mathcal{U};\mathcal{F})=\prod \limits_{\sigma \in S^q(\mathcal{U})}\mathcal{F}(U_{\sigma})$ and the
 $\delta$'s are the usual differentials
 coming from the simplicial structure.
A $q$-cochain $\varphi \in \check{C}^q(\mathcal{U};\mathcal{F})$ is therefore a collection $\varphi=(\varphi_{i_0 \cdots i_q})_{i_0\cdots i_q} $ where
$\varphi_{i_0 \cdots i_q}\in \mathcal{F}(U_{i_0 \cdots i_q})$
for every $(i_0,...,i_q) \in \mathcal{A}^q$. We are of course abbreviating the notation: $U_{i_0\cdots i_q}:=U_{\{i_0,...,i_q\}}.$

 The cohomology of the complex \eqref{cechcomplex} is denoted with $\check{H}^*(\mathcal{U};\mathcal{F})$.
This construction is well behaved with respect to refinements
(different refining maps yield chain homotopic maps).
The \v{C}ech cohomology groups of $M$ with coefficients in $\mathcal{F}$, denoted by ${\check{H}}^*(M;\mathcal{F})$
are defined as the colimit over all the open covers.
They are functorial with respect to continuous maps and morphisms of presheaves.

When $G$ is an abelian group - the case of the Mackenzie class - the \v{C}ech cohomology groups with coefficients in the constant presheaf $U \mapsto G$ are simply denoted with ${\check{H}}^*(M;G)$. 

On paracompact spaces the \v{C}ech cohomology groups of a presheaf coincide with the ones of the associated sheaf \cite[Proposition 13.16]{zbMATH07380471} so that we could use the sheaf of locally constant functions with values in $G$.

\subsubsection{Ordered vs unordered coverings} \label{ordering}
Instead of considering all the simplexes of the nerve we may use only the oriented ones. First choose a total order on $\mathcal{A}$ and denote with $\underrightarrow{S}_q(\mathcal{U})$ the set of oriented $q$-simplexes, the strictly increasing maps $\{0,1,...,q\} \to \mathcal{A}$.
This yields a smaller complex $\underrightarrow{\check{C}}^*(\mathcal{U};\mathcal{F})$, we call it 
{\em{the ordered complex}},
chain homotopic to the {\em{unoredered complex}} ${\check{C}}^*(\mathcal{U};\mathcal{F})$.
The chain homotopy is realized by the natural projection ${\check{C}}^*(\mathcal{U};\mathcal{F}) \to \underrightarrow{\check{C}}^*(\mathcal{U};\mathcal{F})$. This can be easily proven with the methods of Serre's seminal paper \cite[Section 20]{zbMATH03782042}. Notice that Serre uses the complex of alternating chains, a subcomplex of ${\check{C}}^*(\mathcal{U};\mathcal{F})$
isomorphic to the ordered one. 

While the unordered complex is more natural, suited for general proofs,
 the ordered one is often more manageable in computations.
 
\subsubsection*{Comparison with singular cohomology}
Let $M$ be a manifold and fix an abelian group $G$. It is well known that there is a natural isomorphism 
$$H^*_{\textrm{sing}}(M;G) \cong \check{H}^*(M;{G})$$
between the singular cohomology with coefficients in $G$ and the \v{C}ech cohomology with coefficients in ${G}$. This can be done in different ways \cite{zbMATH07420243}; we use a \v{C}ech to de Rham type bicomplex.



For an open set $U \subset M$, we denote by $C^p_{\textrm{sing}}(U;G)$, $p\in \mathbb{N}$ the group of $p$-singular cochains over $U$. 
Recall that these are maps associating an element of $G$ to any singular $p$-simplex $\Delta_p \to U$. Letting $U$ vary, we have a presheaf $U \mapsto C^p_{\textrm{sing}}(U;G).$
We denote with $d:C^p_{\textrm{sing}}(U;G) \to C^{p+1}_{\textrm{sing}}(U;G)$ the usual coboundary induced by the coboundary operator of the singular chains.

Let now $\mathcal{U}=\{U_{\alpha}\}_{\alpha \in \mathcal{A}}$ be a good cover\footnote{the cover is locally finite and all the non void intersections of the $U_i$'s are contractible}
of $M$ and consider the double complex $E^{p,q}:=\prod_{\sigma \in S_p(\mathcal{U})} C^q_{\textrm{sing}}(U_{\sigma},G),$ $p,q\in \mathbb{N}$ equipped with horizontal differential $\delta$ and vertical differential $d$ respectively induced by the \v{C}ech and singular coboundary.

In particular the $q$-th row of the bicomplex is the \v{C}ech complex associated to $\mathcal{U}$ for the presheaf $U \mapsto C^{q}_{\textrm{sing}}(U;G)$ while the $p$-th column is the product over all the $p+1$ intersections of the singular complexes.

Since the two differentials commute, as usual we make a single complex $(E_i,D)$ out of it with $$E_i=\bigoplus \limits_{p+q=i}E^{p,q}, \quad  D=\delta+(-1)^pd,$$  with total cohomology ${H}_D^*(E)$.

The bicomplex can be augmented by adding the \v{C}ech and the singular complex:
\[\begin{tikzcd}[sep=small]
	{C^2_{\textrm{sing}}(M;{G})} & {\prod \limits_{\sigma \in S_0(\mathcal{U})}C^2_{\textrm{sing}}(U_{\sigma};{G})} & {\quad \quad } \\
	{C^1_{\textrm{sing}}(M;{G})} & {\prod \limits_{{\sigma \in S_{0}(\mathcal{U})}}C^1_{\textrm{sing}}(U_{\sigma};{G})} & {\prod \limits_{\sigma \in S_1(\mathcal{U})}C^1_{\textrm{sing}}(U_\sigma;{G})} & {\quad \quad } \\
	{C^0_{\textrm{sing}}(M;{G})} & {\prod \limits_{\sigma \in S_0(\mathcal{U})}C^0_{\textrm{sing}}(U_{\sigma};{G})} & {\prod \limits_{\sigma \in S_1(\mathcal{U})}C^0_{\textrm{sing}}(U_{\sigma};{G})} & {\prod \limits_{\sigma \in S_2(\mathcal{U})}C^0_{\textrm{sing}}(U_{\sigma};{G})} & {} \\
	& {\check{C}^0(\mathcal{U};{G})} & {\check{C}^1(\mathcal{U};{G})} & {\check{C}^2(\mathcal{U};{G})} & {}
	\arrow["\mathsf{r}", from=1-1, to=1-2]
	\arrow[shorten <=20pt, from=1-2, to=1-3]
	\arrow["d", from=2-1, to=1-1]
	\arrow["\mathsf{r}", from=2-1, to=2-2]
	\arrow["d", from=2-2, to=1-2]
	\arrow[from=2-2, to=2-3]
	\arrow[shift left, shorten <=14pt, shorten >=11pt, from=2-3, to=2-4]
	\arrow["d", from=3-1, to=2-1]
	\arrow["\mathsf{r}", from=3-1, to=3-2]
	\arrow["d", from=3-2, to=2-2]
	\arrow[shift left, from=3-2, to=3-3]
	\arrow[from=3-3, to=2-3]
	\arrow[shift left, from=3-3, to=3-4]
	\arrow[shorten >=1pt, from=3-4, to=3-5]
	\arrow["h", from=4-2, to=3-2]
	\arrow["\delta", from=4-2, to=4-3]
	\arrow["h", from=4-3, to=3-3]
	\arrow["\delta", from=4-3, to=4-4]
	\arrow["h", from=4-4, to=3-4]
	\arrow[shorten >=9pt, from=4-4, to=4-5]
\end{tikzcd}\]

\begin{thm}\label{cohomoth}
The rows $E^{*,q}$ of the (non augmented) bicomplex are exact in degree $p>0$ and the columns of the augmented complex are exact.
The maps $\mathsf{r}$ and $h$, induced by restrictions, give rise to isomorphisms
 $$h_*: \check{H}^*(\mathcal{U};G) \to {H}_D^*(E) \quad \textrm{   and   } 
\quad \mathsf{r}_*:H^*_{\textrm{sing}}(M;G) \to {H}_D^*(E).$$ 
In terms of spectral sequences, both the spectral sequences induced by the filtrations by rows and columns degenerate at the second page.
In particular we get an isomorphism 
$\check{H}^*(\mathcal{U};G)\cong H^*_{\operatorname{sing}}(M;G)$ and in turn an isomorphism
$\mathscr{C}:\check{H}^*(M;G)\to H^*_{\operatorname{sing}}(M;G)$ which is natural. Any continuous map $f:M \to N$ induces a commutative diagram:
\[\begin{tikzcd}
	{\check{H}^*(N;G)} & {H^*_{\textrm{sing}}(N;G)} \\
	{\check{H}^*(M;G)} & {H^*_{\textrm{sing}}(M;G)}
	\arrow["\mathscr{C}", from=1-1, to=1-2]
	\arrow["{f^*}"', from=1-1, to=2-1]
	\arrow["{f^*}", from=1-2, to=2-2]
	\arrow["\mathscr{C}", from=2-1, to=2-2]
\end{tikzcd}\]			
\end{thm}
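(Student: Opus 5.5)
This is the standard Weil-type argument with the double complex $E^{p,q}$, applied to the constant presheaf $G$ and a good cover $\mathcal{U}$. The plan is to check the two exactness statements, read off the two isomorphisms by a staircase (zig-zag) argument, and then pass to the colimit over covers and verify naturality.

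\emph{Exactness of the columns.} Fix $p$. The $p$-th augmented column is
\[
\check{C}^p(\mathcal{U};G)\xrightarrow{h}\prod_{\sigma\in S_p(\mathcal{U})}C^*_{\textrm{sing}}(U_\sigma;G),
\]
a product of singular cochain complexes of the nonempty intersections $U_\sigma$. Each $U_\sigma$ is contractible, so $H^0_{\textrm{sing}}(U_\sigma;G)=G$ is given by the constant cochains and the higher groups vanish. The map $h$ sends $g$ to the constant $0$-cochain with value $g$. Hence each column is exact, including at the bottom spot.

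\emph{Exactness of the rows.} Fix $q$. The $q$-th augmented row is
\[
C^q_{\textrm{sing}}(M;G)\to\prod_{S_0}C^q_{\textrm{sing}}(U_\sigma;G)\to\prod_{S_1}C^q_{\textrm{sing}}(U_\sigma;G)\to\cdots
\]
This row is not exact at the left, since a cochain on $M$ is not determined by its values on $\mathcal{U}$-small simplices. So I would replace $C^q_{\textrm{sing}}(M;G)$ by the complex $C^q_{\mathcal{U}}(M;G)$ of cochains on $\mathcal{U}$-small simplices. Barycentric subdivision shows that the restriction $C^*_{\textrm{sing}}(M;G)\to C^*_{\mathcal{U}}(M;G)$ is a quasi-isomorphism.

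For the small-simplex row, exactness follows from a partition of unity on the index set. Choose for every $\mathcal{U}$-small simplex $s$ an index $\alpha(s)$ with $s\subset U_{\alpha(s)}$. Define the homotopy $K\colon E^{p,q}\to E^{p-1,q}$ by
\[
(K\phi)_{\sigma}(s)=\phi_{\alpha(s)\sigma}(s).
\]
The usual identity $\delta K+K\delta=\mathrm{id}$ holds in positive degree, which gives exactness of the rows for $p>0$ (and the row with the small-simplex augmentation is exact everywhere).

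\emph{The isomorphisms.} From column exactness, the column spectral sequence has $E_1$ page concentrated in the bottom row, equal to $\check{C}^*(\mathcal{U};G)$. Hence $E_2=\check{H}^*(\mathcal{U};G)$, the sequence degenerates there, and $h_*$ is an isomorphism. From row exactness, the row spectral sequence has $E_1$ page concentrated in the column $p=0$, equal to $C^*_{\mathcal{U}}(M;G)$. It degenerates as well, so $\mathsf{r}_*$, composed with the subdivision quasi-isomorphism, is an isomorphism. Alternatively, both statements follow directly by the elementary zig-zag argument.

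Set $\mathscr{C}_{\mathcal{U}}=\mathsf{r}_*^{-1}h_*$. For a refinement $\mathcal{V}$ of $\mathcal{U}$ with refining map $\lambda$, there is a bicomplex map $E_{\mathcal{U}}\to E_{\mathcal{V}}$ induced by $\lambda$ and restriction. It commutes with $h$ and with $\mathsf{r}$, so the maps $\mathscr{C}_{\mathcal{U}}$ are compatible with refinement. Good covers are cofinal, so $\mathscr{C}$ is defined on the colimit $\check{H}^*(M;G)$. It is an isomorphism because each $\mathscr{C}_{\mathcal{U}}$ is, and the singular side is independent of $\mathcal{U}$.

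\emph{Naturality.} Let $f\colon M\to N$ be continuous and let $\mathcal{U}$ be a good cover of $N$. Choose a good cover $\mathcal{V}$ of $M$ refining $f^{-1}\mathcal{U}$ via a map $\lambda$. Pullback along $f$ on singular cochains, together with $\lambda$ on indices, defines a bicomplex map $E_{\mathcal{U}}(N)\to E_{\mathcal{V}}(M)$. This map commutes with both augmentations, and the naturality square follows.

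The main obstacle is the row exactness, namely the small-simplices issue and checking the homotopy identity with the signs of $D$. Everything else is formal.
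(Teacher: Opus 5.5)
Your proposal is correct and follows essentially the same route as the paper. It uses the same bicomplex, column exactness from the good cover, reduction to $\mathcal{U}$-small cochains via subdivision, and degeneration of both spectral sequences at $E_2$; naturality likewise comes from a bicomplex map for a good cover of $M$ refining $f^{-1}\mathcal{U}$. The only real difference is how you get row exactness: you use the explicit cochain-level contracting homotopy $(K\phi)_\sigma(s)=\phi_{\alpha(s)\sigma}(s)$, which is valid because $E^{p,q}$ is built on the unordered complex, whereas the paper dualizes the cited chain-level Mayer--Vietoris sequence of small simplices and uses freeness.
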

\begin{proof}
Start with the Mayer--Vietoris sequence for singular chains
$$\xymatrix{0& C_{q,\, \mathcal{U}}^{\textrm{sing}}(M)\ar[l]_-{}&\bigoplus \limits_{\sigma \in S_0(\mathcal{U})}C_q^{\textrm{sing}}(U_{\sigma})\ar[l]_-{\varepsilon}&\bigoplus \limits_{\sigma \in S_1(\mathcal{U})}C_q^{\textrm{sing}}(U_{\sigma})\ar[l] & \cdots \ar[l],
}$$ Here for an open $U$ the term $C_q^{\textrm{sing}}(U)$ is the free group generated by the singular simplexes contained in $U$. The group $C_{q,\, \mathcal{U}}^{\textrm{sing}}(M)\subseteq C_{q}^{\textrm{sing}}(M)$ is the free group of the $\mathcal{U}$-small simplexes, i.e. the subgroup generated by the singular simplexes contained in some open set of $\mathcal{U}$. The exactness of this sequence is proven in \cite[Proposition 15.22]{zbMATH03782042}. Every term is free so that dualization produces the exact sequence
$$\xymatrix{0\ar[r]& \operatorname{Hom}_{\mathbb{Z}}(C_{q,\, \mathcal{U}}^{\textrm{sing}}(M);G)\ar[r]^-{\varepsilon^*}&{\prod \limits_{{\sigma \in S_{0}(\mathcal{U})}}C^q_{\textrm{sing}}(U_{\sigma};{G})}\ar[r]^-{\delta}&\prod \limits_{{\sigma \in S_{1}(\mathcal{U})}}C^q_{\textrm{sing}}(U_{\sigma};{G})\ar[r]& \cdots 
}$$
Such dual sequence, from the second position on, is exactly the $q$-th row of the complex.
Therefore $${\phantom{x}'}E^{p,q}_1
\cong H^p(E^{*,q},\delta)=\begin{cases}
			\varepsilon^*\operatorname{Hom}_{\mathbb{Z}}(C_{q,\, \mathcal{U}}^{\textrm{sing}}(M);G), & \text{if $p=0$}\\
            0, & \text{otherwise.}
		 \end{cases}$$
Only the first column survives and yields the complex
$$({\,}'E_1^{\,0,\ast},d)=\left( \varepsilon^*\operatorname{Hom}_{\mathbb{Z}}(C_{q,\, \mathcal{U}}^{\textrm{sing}}(M);G), d\right) $$
where the differential is induced by $d$. Let's prove that the map $\mathsf{r}$ takes its values in 
${\,}'E_1^{\,0,\ast}$ and
 induces isomorphism in cohomology. This is immediate because the inclusion $\iota_{\mathcal{U}}:C_{q,\, \mathcal{U}}^{\textrm{sing}}(M)\hookrightarrow C_{q}^{\textrm{sing}}(M)$ is a chain homotopy equivalence
by the small simplexes method \cite[Appendix A]{zbMATH03413541}. Therefore $\iota_{\mathcal{U}}^*$ is a chain homotopy equivalence, $\varepsilon^*$ is an isomorphism into its image
and we have $\mathsf{r}=(\iota_{\mathcal{U}}\circ \varepsilon)^*$.

Taking the $d$-cohomology:
$${\phantom{.}'}E_2^{p,q}\cong 
H^q\left(\phantom{.}'E_1^{p,\ast},d \right)
=\begin{cases}
			H^q_{\textrm{sing}}(M;G), & \text{if $p=0$}\\
            0, & \text{otherwise}
		 \end{cases}$$
and we have a natural isomorphism $H_D(E^{\ast,\ast})\cong H^*_{\textrm{sing}}(M;G)$.
The isomorphism takes a representative in the total cohomology and reduces it to a representative in ${\phantom{.}'}E_2^{0,q}$.
In particular the spectral sequence obtained by filtering the double complex by rows
degenerates at the second page.

Now we look at the augmented columns; these are exact in strictly positive degree because $\mathcal{U}$ is a good covering and compute the singular cohomology of the open sets and their intersections. In degree $0$ at the $p$-th column, we have 
$\operatorname{ker}d=\prod \limits_{\sigma \in S_p(\mathcal{U})}C_{\textrm{const}}(U_{\sigma};{G})$, the direct product of the groups of constant functions with values in $G$. This is exactly the augmented row. It follows that 
only the first row survives after taking the $d$-cohomology and 
$h$ induces isomorphism with 
$$E_2^{p,q}\cong 
\begin{cases}
			\check{H}^p(\mathcal{U};{G}), & \text{if $q=0$}\\
            0, & \text{otherwise}
		 \end{cases}$$
 which is, in turn, isomorphic to $H_D^*(E)$.
 Also the spectral sequence
 derived from the filtrations by columns, degenerates to the second page.

To discuss the naturality take a continuous map 
  and let $\mathcal{U}=\{U_i\}_{i\in \mathcal{I}}$ be a good cover of $M$. Find a good cover $\mathcal{V}=\{V_j\}_{j\in \mathcal{J}}$ of $N$  which is a refinement of $f^*\mathcal{U}:=\{f^{-1}(U_i)\}_i$. We have an associated commutative diagram of double complexes with co-augmentations. Passing to the limit the statement follows.
 \end{proof}
   
   \begin{rmk}\label{isocohomology}
	Corresponding to a fixed total order on the index set of the covering we have a smaller bicomplex defined exactly as in \ref{ordering}. This is $\underrightarrow{E}^{p,q}:=\prod_{\sigma \in \underrightarrow{S}_p(\mathcal{U})} C^q_{\textrm{sing}}(U_{\sigma},G),$ $p,q\in \mathbb{N}$ with the same differentials as before. There is a natural projection map of bicomplexes $\pi:E^{p,q} \to \underrightarrow{E}^{p,q}$ commuting with the differentials and with the co-augmentations as well. This is summarized in the following commutative diagram:
	\[\begin{tikzcd}
	{C^*_{\textrm{sing}}(M;G)} \\
	{{E}^{*,*}} & {\underrightarrow{E}^{*,*}} \\
	{\check{C}^*(\mathcal{U};G)} & {\underrightarrow{\check{C}}^*(\mathcal{U};G)}
	\arrow[from=1-1, to=2-1]
	\arrow[from=1-1, to=2-2]
	\arrow["\pi"', from=2-1, to=2-2]
	\arrow[from=3-1, to=2-1]
	\arrow[from=3-1, to=3-2]
	\arrow[from=3-2, to=2-2]
\end{tikzcd}\]

The map $\pi$ induces isomorphism in the
 total cohomology. This boils down the
 fact that for every presheaf the map ${\check{C}}^*(\mathcal{U};\mathcal{F}) \to \underrightarrow{\check{C}}^*(\mathcal{U};\mathcal{F})$ induces isomorphism in cohomology 
   and an application of \cite[Proposition 1.19]{zbMATH03581280}.
\end{rmk}

Now that we have a bicomplex, the ensuing isomorphism $\check{H}^*(\mathcal{U};G) \longrightarrow H^*_{\textrm{sing}}(M;G)$ is given by the familiar Weil's zig-zag procedure. 
We describe it in degree two for it will be useful later:

start with a cocycle $\varphi \in {\underrightarrow{\check{C}}}^2(\mathcal{U};G)$ and find a $\psi \in  \underrightarrow{E}^{1,0}$ 
 such that 
   $\delta \psi=h \varphi$; then lift under $\delta$ the element $d\psi \in \underrightarrow{E}^{1,1}$; we find a $\theta \in \underrightarrow{E}^{0,1}$ with $\delta \theta=d\psi$.
   Finally, as we saw in the proof of Theorem \ref{cohomoth}, the map $\mathsf{r}$ surjects on the subcomplex of the $0$-th column given by 
the kernels of $\delta$ and there induces isomorphism in cohomology.
It follows that there exists $\mu \in C^2_{\textrm{sing}}(M,G)$ with
   $d\mu=0$ and
   $\mathsf{r}\mu=  -d\theta$. Such $\mu$ represents  the image of $[\varphi]$ in $H^2_{\textrm{sing}}(M;G)$. \\
 
Let's look at the case of the sphere that we use in the proof of Theorem \ref{mcmein}.  
Denote with $\Delta^3$ the standard three dimensional simplex with points labelled as in the figure

\tdplotsetmaincoords{65}{120}

\begin{center}
\begin{tikzpicture}[tdplot_main_coords, scale=2.2]

\coordinate (O) at (0,0,0);
\coordinate (A) at (0,0,1);
\coordinate (B) at (0,1,0);
\coordinate (C) at (1,0,0);

\fill[gray!25,opacity=.6] (O) -- (B) -- (C) -- cycle;
\fill[gray!15,opacity=.5] (O) -- (A) -- (C) -- cycle;
\fill[gray!35,opacity=.4] (O) -- (A) -- (B) -- cycle;

\draw[dashed] (O) -- (A);
\draw[dashed] (O) -- (B);
\draw[dashed] (O) -- (C);
\draw[thick] (A) -- (B) -- (C) -- cycle;

\foreach \p in {O,A,B,C}
  \fill (\p) circle (0.7pt);

\node at ($(O)+(-0.10,0.1,0.08)$) {$e_0$};
\node at ($(C)+(0.2,-0.05,0)$) {$e_1$};
\node at ($(B)+(0,0.2,0)$) {$e_2$};
\node at ($(A)+(-0.10,0.1,0.08)$) {$e_3$};

\end{tikzpicture}
\end{center}
We fix an orientation preserving homeomorphism $T \to S^2$ and
denote with $F_i$, $i=0,1,2,3$ the opposite face to the vertex $e_i$ (oriented in the usual way) and  with $\mathsf{e}_i$ and $\mathsf{F}_i$ respectively the images of vertices and faces on the sphere.
Consider the open cover $\mathcal{U}=\{U_i\}_{i=0,1,2,3}$
chosen in such a way that $U_i$ is an open neighbourhood of $\mathsf{F}_i$
small enough to make the cover good. 

We want to compute the pairing of the fundamental class $
\mathsf{S}^2:=[\mathsf{F}_0-\mathsf{F}_1+\mathsf{F}_2-\mathsf{F}_3] \in H_2(S^2;\mathbb{Z})$ with 
$\mathscr{C}[\varphi]$ for
a given class in $\check{H}^2(S^2;G)$ represented by a cocycle $(\varphi_{ijk})_{i<j<k}$ in ${\underrightarrow{\check{C}}}^2(\mathcal{U};G)$. 

\begin{prop}\label{fundamentalpairing}
	With the above notation we have, for the pairing between homology and cohomology:
	\begin{equation} \big{\langle} \mathscr{C}[\varphi], \mathsf{S}^2 \rangle = \varphi_{012}(\mathsf{e}_3) - \varphi_{013}(\mathsf{e}_2) + \varphi_{023}(\mathsf{e}_1) - \varphi_{123}(\mathsf{e}_0)\in G.
	\end{equation}
\end{prop}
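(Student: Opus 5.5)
The plan is to run Weil's zig-zag (described just before the statement) explicitly for the ordered cover $\mathcal{U}=\{U_0,\dots,U_3\}$, and then evaluate the resulting singular cocycle $\mu$ on the cycle $\mathsf{F}_0-\mathsf{F}_1+\mathsf{F}_2-\mathsf{F}_3$. Throughout I would use the ordered complex, which computes the same cohomology by Remark \ref{isocohomology}.

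The first step is to represent each $\mathsf{F}_i$ by a singular $2$-simplex $\sigma_i\colon\Delta^2\to S^2$, obtained by composing the affine face map of $\Delta^3$ with the fixed homeomorphism $T\to S^2$. By construction $\sigma_i$ lies in $U_i$. Write $\partial\sigma_i=\sum_k (-1)^k \tau_{i,k}$, where each edge $\tau_{i,k}$ is the common edge of $\mathsf{F}_i$ and $\mathsf{F}_j$ for the appropriate $j$, and hence lies in $U_{ij}$. Each vertex $\mathsf{e}_l$ lies in $U_i$ for every $i\neq l$, so it lies in the triple intersection $U_{ijk}$ with $\{i,j,k,l\}=\{0,1,2,3\}$.

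The second step is to choose the zig-zag data with support adapted to these simplices.
\begin{enumerate}
\item Take $\psi\in\underrightarrow{E}^{1,0}$ with $\delta\psi=h\varphi$. Since $\varphi$ takes constant values on the connected sets $U_{ijk}$, this means $\psi_{jk}-\psi_{ik}+\psi_{ij}=\varphi_{ijk}$ pointwise on $U_{ijk}$.
\item Take $\theta\in\underrightarrow{E}^{0,1}$ with $\delta\theta=d\psi$, that is, $\theta_j-\theta_i=d\psi_{ij}$ on $1$-simplices in $U_{ij}$.
\item Take a global cocycle $\mu$ with $\mathsf{r}\mu=-d\theta$, so that $\mu(\sigma_i)=-\theta_i(\partial\sigma_i)$.
\end{enumerate}
The explicit choices are not needed: every such choice gives $\mu$ representing $\mathscr{C}[\varphi]$.

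The third step computes the pairing as
\[
\langle\mu,\textstyle\sum_i(-1)^i\sigma_i\rangle=-\sum_i(-1)^i\theta_i(\partial\sigma_i).
\]
Each edge appears twice, once in $\partial\sigma_i$ and once in $\partial\sigma_j$, and the two occurrences have opposite total signs because $\sum_i(-1)^i\partial\sigma_i=0$. The sum therefore regroups into terms of the form $\pm(\theta_j-\theta_i)(\tau)=\pm d\psi_{ij}(\tau)=\pm(\psi_{ij}(\text{end})-\psi_{ij}(\text{start}))$. Evaluating at the vertices and regrouping once more, the contributions at each vertex $\mathsf{e}_l$ combine into an alternating sum $\psi_{jk}-\psi_{ik}+\psi_{ij}$ evaluated at $\mathsf{e}_l$. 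This equals $\varphi_{ijk}(\mathsf{e}_l)$, with sign $(-1)^{3-l}$ up to an overall sign, which yields the claimed formula.

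The main obstacle is the sign bookkeeping: the face orientations, the convention $D=\delta+(-1)^pd$, the minus sign in $\mathsf{r}\mu=-d\theta$, and the pairing of each edge with the correct ordered pair $i<j$. I would check it first on a sample vertex, say $\mathsf{e}_3$, which lies in $U_{012}$. There the three incident edges are the edges of $\mathsf{F}_0,\mathsf{F}_1,\mathsf{F}_2$ through $\mathsf{e}_3$, and they should produce exactly $+\varphi_{012}(\mathsf{e}_3)$. The remaining vertices then follow by the same pattern, with the alternation in $l$. If the overall sign comes out opposite, I would trace it back to the sign convention in the zig-zag rather than to the regrouping step.
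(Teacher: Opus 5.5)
Your route is the paper's route: run the zig-zag $\delta\psi=h\varphi$, $\delta\theta=d\psi$, $\mathsf{r}\mu=-d\theta$, evaluate $\mu$ on $\mathsf{F}_0-\mathsf{F}_1+\mathsf{F}_2-\mathsf{F}_3$, regroup by edges into values of $\delta\theta=d\psi$, then regroup by vertices into values of $\delta\psi=h\varphi$. Every step you describe is correct.

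The gap is that you stop where the content of the proposition lies. You assert the vertex signs $(-1)^{3-l}$ only ``up to an overall sign'', and you do not derive even the relative signs. Your fallback, ``if the overall sign comes out opposite, trace it back to the sign convention in the zig-zag'', is not available. The paper has already fixed $D=\delta+(-1)^p d$, and that convention is what forces $\mathsf{r}\mu=-d\theta$. So $\mathscr{C}$ is a definite map and the formula is a definite signed identity. Changing conventions to make the sign fit would change $\mathscr{C}$, and with it the sign in $\langle\mathscr{C}(e^A),\cdot\rangle=-\partial^A$ downstream.

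The fix is to do the bookkeeping you postponed. From $\mu(\mathsf{F}_i)=-\theta_i(\partial\mathsf{F}_i)$, the pairing equals $-\theta_0(\partial\mathsf{F}_0)+\theta_1(\partial\mathsf{F}_1)-\theta_2(\partial\mathsf{F}_2)+\theta_3(\partial\mathsf{F}_3)$. Collecting the six edges gives
\begin{align*}
&\delta\theta_{03}([\mathsf{e}_1,\mathsf{e}_2])+\delta\theta_{01}([\mathsf{e}_2,\mathsf{e}_3])-\delta\theta_{02}([\mathsf{e}_1,\mathsf{e}_3])\\
&-\delta\theta_{13}([\mathsf{e}_0,\mathsf{e}_2])+\delta\theta_{12}([\mathsf{e}_0,\mathsf{e}_3])+\delta\theta_{23}([\mathsf{e}_0,\mathsf{e}_1]).
\end{align*}
At $\mathsf{e}_3$ the second, third and fifth terms contribute $\psi_{01}(\mathsf{e}_3)-\psi_{02}(\mathsf{e}_3)+\psi_{12}(\mathsf{e}_3)=\varphi_{012}(\mathsf{e}_3)$, with a plus sign. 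The other three vertices give $-\varphi_{013}(\mathsf{e}_2)$, $+\varphi_{023}(\mathsf{e}_1)$ and $-\varphi_{123}(\mathsf{e}_0)$ in the same way. So no convention needs adjusting, but this explicit check at all four vertices is the proof and must be written out.
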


\begin{proof}

This is a straightforward computation. Keeping the notation
as before in the description of $\mathscr{C}[\varphi]$,
 we have:
\begin{align} \nonumber 
&\big{\langle} \mathscr{C}[\varphi], [\mathsf{F}_0-\mathsf{F}_1+\mathsf{F}_2-\mathsf{F}_3]  \big{\rangle} =  \big{\langle} [\mu], [\mathsf{F}_0-\mathsf{F}_1+\mathsf{F}_2-\mathsf{F}_3]  \big{\rangle}\\ \nonumber &=  \mu_0(\mathsf{F}_0)-\mu_1(\mathsf{F}_1)+\mu_2(\mathsf{F}_2)-\mu_3(\mathsf{F}_3) = -d\theta_0(\mathsf{F}_0)+d\theta_1(\mathsf{F}_1)-d\theta_2(\mathsf{F}_2)+d\theta_3(\mathsf{F}_3).\end{align}

If we denote with $[\mathsf{e}_i,\mathsf{e}_j]$ the image of the edges via the homeomorphism to the sphere, the right hand side of the above expression can be continued as
\begin{align} \nonumber 
&-\theta_0([\mathsf{e}_1,\mathsf{e}_2]+[\mathsf{e}_2,\mathsf{e}_3]- [\mathsf{e}_1,\mathsf{e}_3])  +\theta_1([\mathsf{e}_0,\mathsf{e}_2]+[\mathsf{e}_2,\mathsf{e}_3]- [\mathsf{e}_0,\mathsf{e}_3]) \\  \nonumber
 &-\theta_2([\mathsf{e}_1,\mathsf{e}_3]-[\mathsf{e}_0,\mathsf{e}_3]+ [\mathsf{e}_0,\mathsf{e}_1]) 
 +\theta_3([\mathsf{e}_1,\mathsf{e}_2]-[\mathsf{e}_0,\mathsf{e}_2]+ [\mathsf{e}_0,\mathsf{e}_1])\\ &\nonumber
  = \delta \theta_{(0,3)}([\mathsf{e}_1,\mathsf{e}_2]) + \delta \theta_{01}([\mathsf{e}_2,\mathsf{e}_3]) - \delta \theta_{02}([\mathsf{e}_1,\mathsf{e}_3]) - \delta \theta_{13}([\mathsf{e}_0,\mathsf{e}_2]) \\ &\nonumber + \delta \theta_{12}([\mathsf{e}_0,\mathsf{e}_3]) + \delta \theta_{23}([\mathsf{e}_0,\mathsf{e}_1]). 
\end{align}
Now, since $\delta \theta=d\psi$, the above expression equals
	\begin{align} \nonumber 
&\psi_{03}(\mathsf{e}_2-\mathsf{e}_1)+ \psi_{01}(\mathsf{e}_3-\mathsf{e}_2) - \psi_{02}(\mathsf{e}_3-\mathsf{e}_1) - \psi_{13}(\mathsf{e}_2-\mathsf{e}_0) \\ &\nonumber + \psi_{12}(\mathsf{e}_3-\mathsf{e}_0) + \psi_{23}(\mathsf{e}_1-\mathsf{e}_0) \\ \nonumber &= \delta \psi_{012}(\mathsf{e}_3) - \delta \psi_{013}(\mathsf{e}_2) + \delta \psi_{023}(\mathsf{e}_1) - \delta \psi_{123}(\mathsf{e}_0) \\ & \nonumber = \varphi_{012}(\mathsf{e}_3) - \varphi_{013}(\mathsf{e}_2) + \varphi_{023}(\mathsf{e}_1) - \varphi_{123}(\mathsf{e}_0).
	\end{align}			
\end{proof}

\subsubsection*{\v{C}ech to de Rham}
When the coefficients are real or complex, we also have the \v{C}ech to de Rham bicomplex $\mathcal{E}^{p,q}(M)=\prod \limits_{{\sigma \in S_{p}(\mathcal{U})}}\Omega^q(U_{\sigma})$ equipped with the differential coming from the exterior derivative and two co-augmentations. 

Also we may use smooth singular simplexes and singular cochains, we call it $C^{*,\infty}_{\textrm{sing}}$
to make a smooth version, denoted with $E^{p,q}_{\infty}(M)$ of $E^{p,q}$. Of course the natural map of bicomplexes $E^{p,q}(M) \to E^{p,q}_{\infty}(M)$ induces isomorphism in the total cohomology. 

For any open set $U$, the de Rham map $\mathcal{I}: \Omega^q(U) \to C^{q,\infty}_{\operatorname{sing}}(U;\mathbb{R})$ defined by
$$\mathcal{I}(\omega) (\mu)= \int_{\Delta^q}\mu^*\omega \quad \textrm{for every smooth singular simplex }\mu: \Delta^p \to U,$$ gives a map of bicomplexes $\mathcal{I}:\mathcal{E}^{p,q}(M) \to E^{p,q}_{\infty}(M)$ commuting with the co-augmentations and inducing isomorphisms in the total cohomology. In particular the de Rham isomorphism enters in a commutative diagram 
\[\begin{tikzcd}
	{H^*_{\textrm{dR}}(M;\mathbb{R})} & {H^{*,\infty}_{\textrm{sing}}(M,\mathbb{R})} \\
	{\check{H}^*(M;\mathbb{R})}
	\arrow["{\mathcal{I}}", from=1-1, to=1-2]
	\arrow["\cong"', from=1-1, to=2-1]
	\arrow["\cong", from=1-2, to=2-1]
\end{tikzcd}\]
of natural isomorphisms.
\section{Darboux derivatives}\label{darboux}
We collect some basic facts about Maurer--Cartan forms and Darboux derivatives (cf. \cite[Chapter 5]{MACKENZIE2000445}). \smallskip 

Let $G$ be a Lie group; 
the right invariant Maurer--Cartan form $\theta^{R} \in \Omega^1(G,\mathfrak{g})$ is the unique right invariant $\mathfrak{g}$-valued form such that $\theta^R_{e}=\operatorname{Id}$.
Explicitly $\theta_g(u)=d_gR_{g^{-1}}(u).$ 
For $u \in T_gG$. It can be characterized as the unique form such that $\theta^R(X^R)=X$ for every $X\in \mathfrak{g}$.
Denoting with $\iota:G \to G$ the inversion $\iota(g)=g^{-1}$ we have the following properties:
\begin{itemize}
\item $(\iota^*\theta^R)_g=-\operatorname{Ad}_{g^{-1}}\circ\, \theta_g^R$.
\item The Maurer--Cartan equation: 
\begin{equation*}
d\theta^R(X,Y)+ [\theta^R(X),\theta^R(Y)]=0
\end{equation*}
 for $X,Y\in T_gG$.
\end{itemize}

We say that a Maurer--Cartan form on a manifold is a one form $\omega\in \Omega^1(M;\g)$ such that 
$$d\omega + [\omega,\omega]=0.$$

For a smooth map $f:M \to G$, the right Darboux derivative of $f$ is $$\Delta(f):=f^*\theta^R \in \Omega^1(M;\g).$$ Then $d\Delta(f)+[\Delta(f),\Delta(f)]=0$ i.e. $\Delta(f)$ is a Maurer--Cartan form and satisfies:
\begin{itemize}
\item product formula $\Delta(f_1f_2)=\Delta(f_1)+\operatorname{Ad}_{f_1}(\Delta(f_2))$.
\item $\Delta(f^{-1})=-\operatorname{Ad}_{f^{-1}}(\Delta(f))$.
\item If $\Delta(f_1)=\Delta(f_2)$ and $M$ is connected then $f_1^{-1}f_2$ is constant; there is a $g\in G$ such that $f_2=R_g \circ f_1$. If $m\in M$ is a reference point: $g=f_1^{-1}(m)f_2(m)$.
\end{itemize}

\begin{prop}
Let $f:M \to G$ be a smooth map and consider the map $\operatorname{Ad}_f:M \to \operatorname{End}(\mathfrak{g})$. We have 
$$d\operatorname{Ad}_f=[\Delta(f), \operatorname{Ad}_f]=\operatorname{ad}^{\ell}_{\Delta(f)}\circ  \operatorname{Ad}_f=-\operatorname{ad}^{\mathsf{r}}_{\Delta(f)}\circ \operatorname{Ad}_f.$$
Explicitly $$(d_m\operatorname{Ad}_{f(m)}(v)) (X)=[\Delta(f)(v), \operatorname{Ad}_{f(m)}(X)]$$
for $v\in T_mM$ and $X\in \mathfrak{g}$.
\end{prop}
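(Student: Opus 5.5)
The formula $d\operatorname{Ad}_f=[\Delta(f),\operatorname{Ad}_f]$ is the chain rule applied to the map $\operatorname{Ad}\colon G\to\operatorname{Aut}(\g)\subset\operatorname{End}(\g)$. The only input is the derivative of $\operatorname{Ad}$ at the identity, transported by right translation. Fix $m\in M$ and $v\in T_mM$, and choose a smooth curve $c\colon(-\epsilon,\epsilon)\to M$ with $c(0)=m$ and $\dot c(0)=v$. Set $g(s):=f(c(s))$ and $g:=g(0)=f(m)$.

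The key step is to use the right Maurer--Cartan form: put $Y:=\Delta(f)(v)=\theta^R(\dot g(0))=d_gR_{g^{-1}}(\dot g(0))\in\g$. Then the curve $h(s):=g(s)g^{-1}$ satisfies $h(0)=1$ and $\dot h(0)=Y$. Since $\operatorname{Ad}_{g(s)}=\operatorname{Ad}_{h(s)}\circ\operatorname{Ad}_g$, we get
\[
(d_m\operatorname{Ad}_f(v))(X)=\frac{d}{ds}\Big|_{s=0}\operatorname{Ad}_{h(s)}\big(\operatorname{Ad}_g X\big)=\operatorname{ad}_Y\big(\operatorname{Ad}_g X\big).
\]
Here we use that the differential of $\operatorname{Ad}$ at $1$ is $\operatorname{ad}$. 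This gives $[\Delta(f)(v),\operatorname{Ad}_{f(m)}X]$, which is the explicit formula, i.e.\ $\operatorname{ad}^{\ell}_{\Delta(f)}\circ\operatorname{Ad}_f$. The last expression, $-\operatorname{ad}^{\mathsf r}_{\Delta(f)}\circ\operatorname{Ad}_f$, is the same thing with the bracket written as right multiplication: $\operatorname{ad}^{\mathsf r}_Y(Z)=[Z,Y]=-[Y,Z]$.

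The point that needs care is the sign convention. The paper brackets $\g$ with right-invariant vector fields, and this changes the sign of the usual bracket. So we must check that $\frac{d}{ds}\big|_0\operatorname{Ad}_{\exp(sY)}Z$ equals $\operatorname{ad}_YZ=[Y,Z]$ in this convention, and not $-[Y,Z]$. I would check it directly: $\operatorname{Ad}_{\exp sY}Z$ is the pushforward of $Z$ under conjugation, so its derivative is a Lie derivative of an invariant vector field. Computing this with right-invariant fields $Y^R,Z^R$ picks up one sign from the choice of invariance and one from the Lie derivative, and the two signs cancel. This is also the normalization the paper already uses in the compatibility condition \eqref{comp2}: the pair $(\Delta(f),\operatorname{Ad}_f)$ appears in Theorem~\ref{gauge1}, and consistency there forces $d\operatorname{Ad}_f=\operatorname{ad}_{\Delta(f)}\circ\operatorname{Ad}_f$ in this convention.

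If the direct check produced the opposite sign, I would reconcile it with the convention implicit in \eqref{comp2}. No other step is delicate.
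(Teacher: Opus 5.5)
Your first paragraph is correct and is the natural argument (the paper itself gives no proof, only a pointer to Mackenzie). With $h(s)=g(s)g^{-1}$ you get $\dot h(0)=\theta^R(\dot g(0))=\Delta(f)(v)$ and $\operatorname{Ad}_{g(s)}=\operatorname{Ad}_{h(s)}\circ\operatorname{Ad}_{g}$, hence
\[
d_m\operatorname{Ad}_f(v)=(d_1\operatorname{Ad})\bigl(\Delta(f)(v)\bigr)\circ\operatorname{Ad}_{f(m)},
\]
an identity that involves no bracket convention. The gap is in the step you flagged yourself: the sign you assert is the wrong one. In the paper's convention, $[Y,Z]$ is defined by $[Y^R,Z^R]=[Y,Z]^R$, and then $\frac{d}{ds}\big|_{s=0}\operatorname{Ad}_{\exp sY}Z=-[Y,Z]$. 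For matrix groups this is immediate: $\frac{d}{ds}\big|_{s=0}e^{sY}Ze^{-sY}=YZ-ZY$, while $Y^R(g)=Yg$ and $Z^R(g)=Zg$ give $[Y^R,Z^R]=(ZY-YZ)^R$. In your Lie derivative argument only one sign occurs. The flow of $Y^R$ is $\phi_s=L_{\exp sY}$, and the factor $R_{\exp(-sY)}$ of the conjugation fixes $Z^R$. So $(\operatorname{Ad}_{\exp sY}Z)^R=(\phi_s)_*Z^R$, and $\frac{d}{ds}\big|_{s=0}(\phi_s)_*Z^R=-\mathcal{L}_{Y^R}Z^R=-[Y^R,Z^R]$. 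Using right-invariant fields does not produce a compensating sign; it is exactly what makes $\operatorname{Ad}$ appear directly as a push-forward along the flow.

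Your consistency check with \eqref{comp2} also runs the wrong way. That equation reads $d\Phi+\operatorname{ad}_\theta\circ\Phi=0$. With $\theta=\Delta(f)$ and $\Phi=\operatorname{Ad}_f$ as in Theorem~\ref{gauge1}, it says $d\operatorname{Ad}_f=-\operatorname{ad}_{\Delta(f)}\circ\operatorname{Ad}_f$ for the right-invariant bracket, which confirms the minus sign. Once corrected, your argument proves
\[
d\operatorname{Ad}_f=-\operatorname{ad}^{\mathsf{r}}_{\Delta(f)}\circ\operatorname{Ad}_f=\operatorname{ad}^{\ell}_{\Delta(f)}\circ\operatorname{Ad}_f ,
\]
where $\operatorname{ad}^{\mathsf{r}}$ and $\operatorname{ad}^{\ell}$ are the adjoint actions for the right- and left-invariant brackets (the latter is the commutator for matrix groups). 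This is the reading of the superscripts under which the chain of equalities in the statement is consistent. It also means the bracket in the explicit formula $[\Delta(f)(v),\operatorname{Ad}_{f(m)}X]$ is the left-invariant one. Your reading $\operatorname{ad}^{\mathsf{r}}_Y(Z)=[Z,Y]$ turns the last equality into a tautology and hides exactly this point. With the paper's right-invariant bracket, the explicit formula you conclude is off by a sign. To repair the proof, keep your first paragraph and replace the sign discussion by the computation $d_1\operatorname{Ad}(Y)=\operatorname{ad}^{\ell}_Y=-\operatorname{ad}^{\mathsf{r}}_Y$ above.
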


\begin{thm}{\em{(The fundamental Theorem of calculus)}}\label{mcintegral}
Let $G$ be a Lie group with Lie algebra $\mathfrak{g}$ and let 
$M$ be a simply connected manifold. Let
$\omega \in \Omega^1(M;\mathfrak{g})$ be a Maurer--Cartan form.
There is a smooth map $f:M \to G$ with $\Delta(f)=\omega$.
Such map is unique up to right translation with an element of $G$, i.e. if $\Delta(f_1)=\Delta(f_2)$ then there is a unique $g\in G$ such that $f_2=R_g \circ f_1$. 

In particular, with a fixed reference point $m$, we have a unique integral satisfying $f(m)=1$.
\end{thm}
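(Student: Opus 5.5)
The statement to prove is the last one displayed: Theorem \ref{mcintegral}, the fundamental theorem of calculus for Maurer--Cartan forms on a simply connected $M$. I would prove it by the classical graph (Frobenius) argument.

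\textbf{Existence.} On $M\times G$ consider the $\g$-valued $1$-form
\[
\Omega:=\mathrm{pr}_G^*\theta^R-\mathrm{pr}_M^*\omega ,
\]
and the distribution $\mathcal{D}=\ker\Omega$. Since $\theta^R_g$ is an isomorphism $T_gG\to\g$, the fiber $\mathcal{D}_{(x,g)}$ is the graph of the linear map $v\mapsto (\theta^R_g)^{-1}(\omega_x(v))$. Thus $\mathcal{D}$ is a smooth distribution of rank $\dim M$, transverse to the $G$-fibers.

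\textbf{Involutivity.} This is the computational heart of the proof. I would compute $d\Omega$ using the Maurer--Cartan equations $d\theta^R+[\theta^R,\theta^R]=0$ and $d\omega+[\omega,\omega]=0$. With the paper's convention $[\alpha,\alpha](X,Y)=[\alpha(X),\alpha(Y)]$, this gives
\[
d\Omega=-[\theta^R,\theta^R]+[\omega,\omega].
\]
On $\mathcal{D}$ we have $\theta^R=\omega$ (pulled back), so $d\Omega\rvert_{\mathcal{D}}=0$. Since $\mathcal{D}=\ker\Omega$, for $X,Y\in\Gamma(\mathcal{D})$ we get
\[
\Omega([X,Y])=X\Omega(Y)-Y\Omega(X)-d\Omega(X,Y)=0 .
\]
Hence $\mathcal{D}$ is involutive. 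The step needing the most care is this sign bookkeeping: one must check that the right-invariant bracket convention makes $\theta^R$ satisfy exactly the same equation as $\omega$. That is precisely how the appendix states it, so the difference of the two bracket terms cancels on $\mathcal{D}$.

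\textbf{Leaf gives the section.} Frobenius yields a maximal integral leaf $\Lambda$ through $(m,1)$. The projection $\mathrm{pr}_M\rvert_\Lambda$ is a local diffeomorphism, because $\mathcal{D}$ is a graph over $TM$. The key point is that $\mathcal{D}$ is invariant under the right translations $(x,g)\mapsto(x,gh)$, since $\theta^R$ is right-invariant. I would then show $\mathrm{pr}_M\rvert_\Lambda$ is a covering map by path lifting: given a path $c$ in $M$ and a starting point in $\Lambda$, solve the ODE
\[
\dot g=(\theta^R_g)^{-1}\bigl(\omega(\dot c)\bigr),\qquad\text{equivalently}\qquad \dot g\,g^{-1}=\omega(\dot c).
\]
This ODE is right-invariant, so its solutions exist for all times; this completeness is exactly what makes the leaf a covering and not merely a local diffeomorphism. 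Since $M$ is simply connected and connected, the covering $\Lambda\to M$ is a diffeomorphism. Therefore $\Lambda$ is the graph of a smooth $f:M\to G$ with $f(m)=1$. Being tangent to $\mathcal{D}$ means $f^*\theta^R=\omega$, i.e.\ $\Delta(f)=\omega$.

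\textbf{Uniqueness.} Suppose $\Delta(f_1)=\Delta(f_2)$. Put $h=f_1^{-1}f_2$. By the product formula and the formula for $\Delta(f^{-1})$ from the appendix,
\[
\Delta(h)=\Delta(f_1^{-1})+\operatorname{Ad}_{f_1^{-1}}\Delta(f_2)=-\operatorname{Ad}_{f_1^{-1}}\Delta(f_1)+\operatorname{Ad}_{f_1^{-1}}\Delta(f_2)=0 .
\]
So $h$ is locally constant, hence constant, say $h\equiv g$, because $M$ is connected. Then $f_2=f_1g=R_g\circ f_1$. The element $g$ is unique, since evaluating at any point gives $g=f_1(m)^{-1}f_2(m)$. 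Finally, normalizing by the condition $f(m)=1$ singles out the unique integral.
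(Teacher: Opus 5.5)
Your proof is correct. The paper gives no proof of this statement: it is recalled in the appendix as a classical fact, with a pointer to the literature on Darboux derivatives. The only argument nearby is the passage to the quasi-periodic integral on $\widetilde{M}$, and that uses the theorem rather than proving it. So there is no competing route in the paper. What you wrote is the standard Frobenius argument on $M\times G$. Your uniqueness step is exactly the third bullet of the appendix, which the paper also leaves unproved. Your sign check is right: with the paper's right-invariant bracket convention, $\theta^R$ and $\omega$ satisfy the same equation $d\alpha+[\alpha,\alpha]=0$, so $d\Omega$ vanishes on $\ker\Omega$.

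The one step you pass over quickly is ``path lifting, hence covering''. It rests on two facts you should state explicitly.
\begin{itemize}
\item A curve tangent to an involutive distribution lies in a single leaf and is continuous for the leaf topology; you can read this off in a flat chart. This is what makes your ODE solution a genuine lift into $\Lambda$.
\item A local homeomorphism onto a manifold with the curve lifting property is a covering. This needs the monodromy theorem over a ball neighbourhood, where smooth paths and smooth homotopies suffice.
\end{itemize}
If you prefer to avoid covering-space theory, use right invariance directly. Suppose the leaf through $(x,1)$ is the graph of $f_U$ over a connected neighbourhood $U$ of $x$. Each translate $\mathrm{graph}(f_U\cdot h)$ is a connected integral manifold of full dimension, hence open in whichever leaf contains it. The part of $\Lambda$ lying over $U$ is then the disjoint union of those translates contained in $\Lambda$, each mapped diffeomorphically onto $U$. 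Together with the surjectivity given by your completeness argument, this is exactly the covering property.
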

When $M$ is connected but non necessarily simply connected, let $\pi:\widetilde{M}\to M$ be the universal cover of $M$ with base point $m$
and right action of $\pi_1(M,m)$. Let $\omega$ be a Maurer--Cartan form on $M$; pulling back $\omega$ to $\widetilde{M}$ 
and fixing a point $\widetilde{m}$ over $m$, we can find $\widetilde{f}:\widetilde{M}\to G$ with
$$\Delta(\widetilde{f})=\omega, \quad \widetilde{f}({\widetilde{m}})=1$$
and a monodromy map $\rho:\pi_1(M,m) \to G$ such that
$$\widetilde{f}\circ R_{\gamma}=R_{\rho(\gamma)}\circ \widetilde{f},$$
for every $\gamma \in \pi_1(M,m)$.
The monodromy is a morphism of groups $\rho(\gamma_1\gamma_2)=\rho(\gamma_1)\rho( \gamma_2)$, $\gamma_1,\gamma_2 \in \pi_1(M,m)$.

Indeed first we solve the integration problem on $\widetilde{M}$ for the form $\pi^*\omega$ to find $\widetilde{f}$ (with the base point condition); then we compare any $\widetilde{f}\circ R_{\gamma}$ with $\widetilde{f}$. For every $\gamma \in \pi_1(M,m)$ there is a $\rho(\gamma) \in G$ with $\widetilde{f} \circ R_{\gamma}=R_{\rho(\gamma)}\circ \widetilde{f}$. This $\rho(\gamma)$ is uniquely determined by $\gamma$; indeed $\rho(\gamma)=\widetilde{f}(\widetilde{m} \cdot \gamma)$. 
This means that $\widetilde{f}$ is quasi periodic:
$$\widetilde{f}(x \cdot \gamma)=\widetilde{f}(x)\rho(\gamma) $$
for every $x\in \widetilde{M}$ and $\gamma \in \pi_1(M,m)$.
That $\rho$ is a morphism is automatic.

\bibliography{MacK}
\bibliographystyle{plain}

\end{document}